\newcommand{\bC}{{\mathbb C}}
\newcommand{\bR}{{\mathbb R}}
\newcommand{\cC}{\mathcal C}
\newcommand{\cS}{\mathcal S}
\newcommand{\cM}{\mathcal M}
\newcommand{\cO}{\mathcal O}
\newcommand{\cL}{\mathcal L}
\newcommand{\cR}{\mathcal R}
\def\co{\colon\thinspace}
\newcommand{\bw}{\mathbf w}
\newcommand{\bH}{\mathbf H}
\newtheorem{thm}{Theorem}[section]
\newtheorem{cor}[thm]{Corollary}
\newtheorem{lem}[thm]{Lemma}
\newtheorem{lemma}[thm]{Lemma}
\newtheorem{prop}[thm]{Proposition}
\newtheorem{defin}[thm]{Definition}
\newtheorem{def-lem}[thm]{Definition-Lemma}
\newtheorem{conj}[thm]{Conjecture}
\theoremstyle{remark}
\newtheorem{rem}[thm]{Remark}
\newtheorem{example}[thm]{Example}
\newcommand\abs[1]{|#1|}
\begin{document}

\title[Super-rigid skeleta]{Super-rigidity of certain skeleta using relative symplectic cohomology}
\author{Dmitry Tonkonog}
\address{(DT) Harvard University}
\address{(DT) University of California, Berkeley}
\author{Umut Varolgunes}
\address{(UV) Stanford University}
\begin{abstract}
	This article uses relative symplectic cohomology, recently studied by the second author, to understand rigidity phenomena for compact subsets of symplectic manifolds. As an application, we consider a symplectic crossings divisor in a Calabi-Yau symplectic manifold $M$ whose complement is a Liouville manifold. We show that, for a carefully chosen Liouville structure, the skeleton as a subset of $M$ exhibits strong rigidity properties akin to super-heavy subsets of Entov-Polterovich. 
	
	Along the way, we expand the toolkit of relative symplectic cohomology by introducing products and units. We also develop what we call the contact Fukaya trick, concerning the behaviour of relative symplectic cohomology of subsets with contact type boundary under adding a Liouville collar.
\end{abstract}
\maketitle

\section{Introduction}

\subsection{Motivation}
%
%

Let $M$ be a symplectic manifold. Let us recall some standard definitions (see e.g.~\cite{EnPo09}). Two subsets $K,S\subset M$ are called {\it displaceable} from each other if there exists a Hamiltonian diffeomorphism $\phi\co M\to M$ such that $\phi(K)\cap S=\emptyset$. Otherwise, $S$ is called non-displaceable from $K$ and vice versa. A subset $K\subset M$ is called (non-) displaceable if it is (non-) displaceable from itself. If $K,S\subset M$ are not displaceble from each other via a symplectomorphism, then we call them {\it strongly non-displaceable}.

Subsets $K,S\subset M$ are called {\it stably displaceable} if $K\times S^1$ is displaceable from $S\times S^1$ as subsets of the symplectic manifold $M\times T^*S^1$,  with the product symplectic form, where $S^1$ represents the zero section. Otherwise, they are called {\it stably non-displaceable}, and $K\subset M$ is called stably (non-) displaceable if its stably (non-) displaceable from itself.

When the subset $K\subset M$ is a Lagrangian submanifold,  Lagrangian Floer theory, see e.g.~\cite{FO3Book}, presents a powerful machine for detecting non-displaceability. Entov and Polterovich \cite{EnPo09} introduced the notions of heavy and superheavy sets (using their construction of a partial symplectic quasi-state via spectral invariants \cite{EP06}), giving access to non-displaceability questions for general compact subsets. These two techniques were combined in the seminal work \cite{FO3memoir}. 

In this paper, we approach these questions via another set of tools which uses recent advancements in Floer theory. Let us mention Corollory \ref{mainresult} as our main Floer-theory-free result. Some of our readers might want to start reading from this statement and work their way backwards (and then hopefully forwards) in the introduction.

\subsection{Symplectic-cohomological visibility}
Let $(M,\omega)$ be a closed symplectic manifold. Recall the definitions of the Novikov ring
$$
\Lambda_{\ge 0}=\left\{\sum_{i=1}^\infty a_iT^{\omega_i}: a_i\in \mathbb{Q},\ \omega_i\in \bR_{\ge 0},\ \omega_i\to+\infty \right\}
$$
and the Novikov field
$$
\Lambda=\left\{\sum_{i=1}^\infty a_iT^{\omega_i}: a_i\in \mathbb{Q},\ \omega_i\in \bR,\ \omega_i\to+\infty \right\}.
$$


We approach non-displaceability questions for general compact subsets using {\it relative symplectic cohomology}, recently studied by the second author \cite{VaThesis,Va18}.
The reader might benefit from taking a quick look at Section \ref{ssremindersh} below, where we provided a summary of the construction. For a compact subset $K\subset M$, let $SH_M(K,\Lambda)$ be the $\Lambda$-vector space as defined in Equation (\ref{eqdefsh}).


\begin{defin}
	A compact subset $K$ is called $SH$\textbf{-invisible} if $SH_M(K,\Lambda)=0$, and $SH$\textbf{-visible} otherwise.
\end{defin}

Below we list some properties of this notion. 
Recall that a {\it compact domain} in $M$ is a compact codimension-zero submanifold with boundary. For $K\subset M$, we say that a sequence of compact domains $D_1,D_2,\ldots $ {\it approximate} $K$, if $\bigcap D_i=K$
and $D_{i+1}\subset int(D_i)$ , for all $i\geq 1$.

\begin{thm}
	
	\label{thm:visib_properties}
	The following properties hold.

\begin{enumerate}
	\item $M\subset M$ is $SH$-visible.
	\item SH-visibility is invariant under symplectomorphisms. 
	\item If a compact subset $K$ is stably displaceable, then it is $SH$-invisible.
	\item If a compact subset $K$ is $SH$-invisible, then any compact subset $K'\subset K$ is also $SH$-invisible.
	\item If $K$ and $K'$ are $SH$-invisible compact subsets which can be approximated by compact domains  $D_1,D_2,\ldots $ and $D_1',D_2',\ldots $ such that for evey $i\geq 1$, $\partial D_i\cap \partial D_i'=\emptyset$, then $K\cup K'$ is also $SH$-invisible. 
	\item Let $K$ be an $SH$-invisible compact domain. Then $K$ does not contain any tautologically unobstructed\footnote{A Lagrangian submanifold $L$ is called tautologically unobstructed if there exists a compatible almost complex structure $J$ such that $L$ bounds no non-constant $J$-holomorphic disks.} orientable Lagrangian submanifold (with a relative Pin structure) in its interior. 
\end{enumerate}
\end{thm}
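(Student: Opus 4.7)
The plan is to dispatch the six parts in turn, since they become progressively more involved. Parts (1)--(3) should reduce to already-established properties of relative symplectic cohomology. For (1), the definition applied to $K=M$ uses Hamiltonians that can be taken identically zero on the complement $M\setminus M=\emptyset$; the resulting telescope computes the quantum (Hamiltonian Floer) cohomology $QH^\ast(M;\Lambda)$, which is nonzero since it contains a unit. Part (2) is tautological from functoriality: a symplectomorphism pushes forward acceleration data to acceleration data, inducing an isomorphism on telescopes. For (3), I would appeal directly to the stable displaceability vanishing result of the second author: a Hamiltonian displacing $K\times S^1$ from itself in $M\times T^\ast S^1$ trivialises a continuation self-map in the telescope for $K\times S^1$, and a K\"unneth-type argument then forces $SH_M(K,\Lambda)=0$.

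Parts (4) and (5) exploit the unital ring structure and Mayer--Vietoris sequence that this paper is developing. For (4), any cofinal family of Hamiltonians vanishing near $K$ also vanishes near $K'\subset K$, so (after enlargement to a cofinal family for $K'$) one obtains a canonical map $SH_M(K,\Lambda)\to SH_M(K',\Lambda)$ which is unital by compatibility of the unit class with acceleration. Vanishing of the source then forces vanishing of the unit on the target, hence $SH_M(K',\Lambda)=0$. For (5), the disjointness $\partial D_i\cap\partial D_i'=\emptyset$ ensures the boundaries form a transverse pair suitable for a Mayer--Vietoris long exact sequence
\begin{equation*}
\cdots \to SH_M(K\cup K',\Lambda) \to SH_M(K,\Lambda)\oplus SH_M(K',\Lambda) \to SH_M(K\cap K',\Lambda) \to \cdots.
\end{equation*}
Part (4) applied to $K\cap K'\subset K$ gives $SH_M(K\cap K',\Lambda)=0$, and exactness then yields $SH_M(K\cup K',\Lambda)=0$.

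For (6), the plan is a closed--open string map. A tautologically unobstructed orientable Lagrangian $L$ with a relative Pin structure, lying in the interior of the compact domain $K$, admits a well-defined Floer cohomology $HF^\ast(L,L;\Lambda)$; with the almost complex structure $J$ witnessing tautological unobstructedness there are no disk contributions, and this group computes $H^\ast(L;\Lambda)$, which is nonzero. Using moduli of $J$-holomorphic disks on $L$ with one interior puncture asymptotic to an orbit in the telescope for $K$, one constructs a unital ring homomorphism $\mathcal{CO}\co SH_M(K,\Lambda) \to HF^\ast(L,L;\Lambda)$. If the source vanishes, so must the unit on the target, contradicting nonvanishing above.

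I expect the hardest part of the proof to be not the logical deductions above but the construction and verification of the underlying algebraic structures on relative symplectic cohomology --- in particular the unit class (central to (4) and (6)), the Mayer--Vietoris long exact sequence (central to (5)), and the unital closed--open map (central to (6)). Once these foundational tools are in place, each of the six properties follows by a short formal argument of the type sketched.
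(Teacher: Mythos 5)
Your treatment of items (1)--(5) is essentially sound and matches the paper's view that these follow from the second author's prior work together with the unital restriction maps of Section~\ref{sec:prod_unit}. Two small remarks there: for (5), the paper cites it directly from \cite{VaThesis} as an application of the Mayer--Vietoris property (and thus independently of (4)), whereas you derive it \emph{from} (4) applied to $K\cap K'$; both routes work, but note that your version requires a form of the Mayer--Vietoris sequence for compact subsets $K,K'$ whose union need not equal $M$, which needs the approximating domains $D_i,D_i'$ with disjoint boundaries and a colimit-over-$i$ argument rather than a single application of the sequence.

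Item (6) is where the proposal has a genuine gap. You posit a unital closed--open map
$\cC\cO\co SH_M(K,\Lambda)\to HF^*(L,L;\Lambda)$
with a fixed, small Lagrangian Floer complex as target, and derive the contradiction directly. But that map does not exist in the present framework. The chain-level construction uses a disk with an interior cylindrical input carrying Hamiltonians $H_i$ from the acceleration datum for $K$, and a boundary strip-like output; the monotonicity constraint $d(H\alpha)\geq 0$ forces the output Hamiltonian to \emph{dominate} the input Hamiltonian. Since $H_i\to+\infty$ pointwise outside $K$, the output datum cannot be a fixed small perturbation on $L$: it must itself grow, which is precisely what defines the \emph{relative} Lagrangian group $HF^*(L,K,\Lambda)$. (A sanity check that the map you posit cannot exist for general $L\subset M$: taking $K$ a tiny displaceable disk in $T^2$ far from a meridian $L$, one would deduce $HF^*(L,L;\Lambda)=0$, which is false.) The paper therefore introduces $HF^*(L,K,\Lambda)$ and a closed--open map
$\cC\cO\co SH_M^*(K,\Lambda)\to HF^*(L,K,\Lambda)$
landing there (Proposition~\ref{prop:zero_unit_hf_sh}, Section~\ref{sec:lag}). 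With that, $SH_M^*(K,\Lambda)=0$ forces $1_{K,L}=0$, hence $HF^*(L,K,\Lambda)=0$ by unitality (Proposition~\ref{prop:unit_hf}). One then needs the Lagrangian Mayer--Vietoris sequence of Proposition~\ref{prop:lag_analog_thesis}: setting $N=\overline{M\setminus K}$, one has $HF^*(L,N,\Lambda)=0$ and $HF^*(L,K\cap N,\Lambda)=0$ because $L\subset\mathrm{int}(K)$ is disjoint from $N$ and $K\cap N$ (or, alternatively, by propagating vanishing of units along restriction maps), and the sequence then yields $HF^*(L,\Lambda)=0$, contradicting $HF^*(L,\Lambda)\cong H^*(L;\Lambda)\neq 0$. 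Your argument needs these two intermediate steps --- the relative target and the Lagrangian Mayer--Vietoris --- to go through.
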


Items~(1),~(2), (3) and~(5) were proved in \cite{VaThesis}, where (5) is a special case of the Mayer-Vietoris property, for which a more polished reference is \cite{Va18} (specifically Section 4.3). The proof of~(4) and~(6) is one of the main contributions of the present paper. 

\begin{conj}\label{conjecture}
Item (6) in Theorem \ref{thm:visib_properties} can be upgraded to Lagrangian submanifolds admitting bounding cochains (in the sense of \cite{FO3Book}) with nonzero self-Floer cohomology.
\end{conj}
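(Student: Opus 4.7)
The plan is to upgrade the closed-open map argument used for Theorem~\ref{thm:visib_properties}(6) so that it accommodates bounding cochain deformations in the sense of \cite{FO3Book}. Let $L\subset \operatorname{int}(K)$ be an orientable Lagrangian with a relative Pin structure and a bounding cochain $b$ such that $HF((L,b),(L,b))\neq 0$; the goal is to show that $K$ is $SH$-visible. The key device is a $\Lambda$-linear closed-open map
$$\mathcal{CO}^{0,b}\co SH_M(K,\Lambda)\longrightarrow HF((L,b),(L,b))$$
defined by counting genus-zero Floer curves with one negative cylindrical puncture modeled on the acceleration data for $K$, one boundary output on $L$, and arbitrarily many unordered interior boundary marked points weighted by insertions of $b$. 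Since $L$ lies in the interior of $K$, one can arrange the acceleration Hamiltonians to be $C^2$-small in a neighborhood of $L$, while the contact Fukaya trick developed elsewhere in the paper governs the behaviour of curves crossing the contact shells separating $L$ from $\partial K$.

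The second step is unitality: $\mathcal{CO}^{0,b}(e_K)=e_{(L,b)}$, where $e_K\in SH_M(K,\Lambda)$ is the unit of the ring structure constructed in this paper. For $b=0$ this is a direct TQFT argument, degenerating the cylindrical input to a constant cap and recognizing the resulting count as the unit of $HF(L,L)$, which is essentially the content of the proof of (6). For general $b$, one absorbs the $b$-insertions along the boundary and uses the curved $A_\infty$-relation $\sum_k m_k(b,\ldots,b)=\lambda\cdot e_L$ together with a standard deformation argument to see that the image of $e_K$ is still the unit of the $b$-deformed Floer cohomology. Given unitality, the hypothesis $SH_M(K,\Lambda)=0$ forces $e_K=0$, hence $e_{(L,b)}=0$; since $HF((L,b),(L,b))$ is unital as a ring, this contradicts the nonvanishing assumption.

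The hard part is constructing $\mathcal{CO}^{0,b}$ rigorously at the level of relative symplectic cohomology. Recall that $SH_M(K,\Lambda)$ is defined via a telescope/homotopy colimit over an acceleration datum of Hamiltonians followed by Novikov completion; one must define compatible closed-open maps for each stage of the telescope, verify compatibility with continuation maps, and show independence of the acceleration datum. In addition, the infinite series over $b$-insertions must converge in the Novikov topology uniformly in the acceleration parameters and interact correctly with the completion, and the construction must be compatible with the restriction maps used in the Mayer-Vietoris package of \cite{Va18}. Once this machinery is in place, the unitality step and the final contradiction are essentially formal, so the substantive work is entirely definitional. I would expect a complete proof along these lines to require, in addition, a careful treatment of obstruction bubbling for $L$ parallel to that of \cite{FO3Book}, which is why the statement is left as a conjecture rather than proved together with (6).
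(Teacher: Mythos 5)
This statement is a \emph{conjecture} in the paper: the authors explicitly leave it open, and the remark immediately following it identifies the missing ingredient as a systematic treatment of the full Hamiltonian isotopy invariance package (higher homotopy coherences between continuation maps and the associated topological-energy bounds) for Floer theory of Lagrangians equipped with bounding cochains. Your sketch is therefore correctly framed as a strategy rather than a proof, and your closing acknowledgement that the substantive technical work remains undone is in accord with the paper's own assessment.

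That said, there is a structural gap in your sketch relative to the paper's actual proof of Item (6), which is the argument you are proposing to upgrade. You posit a direct closed-open map $\mathcal{CO}^{0,b}\colon SH_M(K,\Lambda)\to HF((L,b),(L,b))$ whose target is the \emph{absolute} (bounding-cochain-deformed) self-Floer cohomology of $L$. In the relative framework of this paper, the natural target of a closed-open map with domain $SH_M(K,\Lambda)$ is the \emph{relative} group $HF^*((L,b),K,\Lambda)$, built from the same acceleration data for $K$. The paper's proof of (6) runs precisely through this relative group and then requires two further steps that your sketch omits: the vanishing $HF^*(L,N,\Lambda)=0$ for $N=\overline{M\setminus K}$ (using that $L\subset\operatorname{int}(K)$ is disjoint from $N$, item (3) of Proposition~\ref{prop:lag_analog_thesis}), and the Lagrangian Mayer--Vietoris sequence (item (4)), which together yield $HF^*(L)\cong HF^*(L,M,\Lambda)=0$ and hence the contradiction. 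A direct map into $HF((L,b),(L,b))$ does not exist for free; the relative group $HF^*(L,K,\Lambda)$ is not a priori isomorphic to the absolute one even when $L$ lies in the interior of $K$, since Floer trajectories may exit $K$. You mention the Mayer--Vietoris package in passing as something to be made compatible, but the logic of your proof sketch does not actually invoke it, and it is not optional here.

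Two smaller points. First, the contact Fukaya trick plays no role in the closed-open map or in the proof of (6); in the paper it is used only for the neck-invariance Proposition~\ref{prop:index_bd_collar_invt}. Invoking it to control curves ``crossing contact shells separating $L$ from $\partial K$'' is a misattribution. Second, while Novikov-topology convergence of the $b$-insertion series and obstruction bubbling do need care, the obstruction the authors emphasize is specifically the homotopy-coherent continuation-map package for bounding-cochain Floer theory; this is what one needs to make the relative invariant $HF^*((L,b),K,\Lambda)$ well-defined and to push through the unit and restriction-map arguments of Propositions~\ref{prop:unit_hf} and~\ref{prop:zero_unit_hf_sh} in the deformed setting. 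Your sketch gestures at this but understates it by describing the remaining work as ``essentially definitional.''
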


\begin{rem}The main piece missing in proving this conjecture is a systematic treatment of the full Hamiltonian isotopy invariance package (including all higher homotopy coherences between continuation maps and the required bounds on the quantitative contributions of Floer solutions via topological energy) for the Floer theory of Lagrangians equipped with bounding cochains. Such a treatment is possible but is outside of the scope of this paper. We expect every statement we make about tautologically obstructed Lagrangians to be also true for Floer theoretically essential Lagrangians. 
\end{rem}

\begin{defin}
	A compact subset $K$ is called $SH$\textbf{-full} if every compact set contained in its complement $M\setminus K$ is $SH$-invisible.
\end{defin} 

Below is a direct corollary of Theorem~\ref{thm:visib_properties}. 

\begin{cor}
	\label{cor:diagram_vis_and_full}
	The following implications are true.
$$
\begin{array}{cccc}
(i)&
K \text{ is $SH$-visible }&\Rightarrow& K\text{ is stably non-displaceable from itself};
\vspace{0.4em}
\\
(ii)&
K \text{ is $SH$-full }&\Rightarrow& K\text{ is strongly non-displaceable~from any $SH$-visible subset};
\vspace{0.4em}
\\
(iii)&
K \text{ is $SH$-full }&\Rightarrow& 
\begin{array}{c}
K\text{ is strongly non-displaceable~from any}\\
\text{tautologically unobstructed Lagrangian submanifold.}
\end{array}
\end{array}
$$
\end{cor}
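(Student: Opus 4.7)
The plan is to deduce each of the three implications directly from Theorem~\ref{thm:visib_properties}, each time by contradiction, converting a displaceability statement into a statement about the SH-invariants of the image of the displacing map.

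For part~(i), I would argue by contrapositive: if $K$ is stably displaceable from itself, then item~(3) of Theorem~\ref{thm:visib_properties} gives that $K$ is $SH$-invisible, so the $SH$-visibility of $K$ forces stable non-displaceability from itself.

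For part~(ii), suppose $K$ is $SH$-full and $K'\subset M$ is $SH$-visible. If $K$ and $K'$ were displaceable from each other by a symplectomorphism $\phi\co M\to M$, i.e.\ $\phi(K')\cap K=\emptyset$, then $\phi(K')$ would be a compact subset of $M\setminus K$, which by the definition of $SH$-fullness would be $SH$-invisible. On the other hand, item~(2) of Theorem~\ref{thm:visib_properties} says that $SH$-visibility is a symplectic invariant, so $\phi(K')$ is $SH$-visible, a contradiction.

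For part~(iii), suppose $K$ is $SH$-full and $L\subset M$ is a tautologically unobstructed orientable Lagrangian equipped with a relative Pin structure. Assume for contradiction there is a symplectomorphism $\phi\co M\to M$ with $\phi(L)\cap K=\emptyset$. Since $\phi(L)$ is compact and $M\setminus K$ is open, I can find a compact domain $D$ with $\phi(L)\subset \operatorname{int}(D)\subset D\subset M\setminus K$. By $SH$-fullness of $K$, $D$ itself is a compact subset of $M\setminus K$ and hence $SH$-invisible. Now item~(6) of Theorem~\ref{thm:visib_properties} says that $D$ cannot contain any tautologically unobstructed orientable Lagrangian (with relative Pin structure) in its interior. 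But $\phi(L)$ is exactly such a Lagrangian: tautological unobstructedness, orientability, and a relative Pin structure all transport through the symplectomorphism $\phi$ (the pushforward $\phi_*J$ of a witness almost complex structure $J$ witnesses tautological unobstructedness for $\phi(L)$, and the other structures pull back trivially), giving the desired contradiction.

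None of these steps presents a serious obstacle; the only mild point to check is the naturality of the definition of tautologically unobstructed orientable Lagrangians with relative Pin structure under symplectomorphisms in the argument for~(iii), which amounts to the bijection between $J$-holomorphic disks with boundary on $L$ and $\phi_*J$-holomorphic disks with boundary on $\phi(L)$ induced by $\phi$.
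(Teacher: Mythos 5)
Your proof is correct and follows essentially the same route as the paper: (i) is the contrapositive of Theorem~\ref{thm:visib_properties}(3), (ii) combines $SH$-fullness with the symplectomorphism invariance from item~(2), and (iii) passes to a compact domain around the (displaced) Lagrangian and invokes item~(6). If anything, your treatment of (iii) is slightly more careful than the paper's, which writes the argument only for $L$ literally disjoint from $K$ and leaves the reduction to that case (via transporting the tautologically unobstructed structure through $\phi$) implicit; you make this explicit.
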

\begin{proof}
	We shall refer to the items from Theorem~\ref{thm:visib_properties} simply by their numbers.
The first implication is equivalent to~(3).

Let us prove the second implication.
Suppose $K,S\subset  M$ are compact subsets, $K$ is $SH$-full and $S$ is $SH$-visible. Suppose $K,S$ are  displaceable by a symplectomorphism; by~(2), we may assume that they are actually disjoint. This is a contradiction.

Let us prove the third implication. 
Let $K$ be an $SH$-full subset, $L$ a tautologically unobstructed Lagrangian submanifold, and assume $L$ is disjoint from $K$. Then the closure of some tubular neighbourhood $\overline{U(L)}$ is still disjoint from $K$.
By (ii), $\overline{U(L)}$ is $SH$-invisible. This contradicts (6).
\end{proof}	

\begin{rem}
	The existence of such notions as 
$SH$-fullness, and $SH$-visibility is not surprising. Their behavior is similar to the notions of superheaviness, and heaviness (resp.) of Entov-Polterovich \cite{EnPo09}. We hope the precise relationship between the two frameworks will be explained in a collective effort.
\end{rem}

\begin{rem}
Entov-Polterovich in fact define heaviness and superheaviness with respect to any idempotent in quantum cohomology. Since our main application is to Calabi-Yau manifolds\footnote{This only means $c_1(M)=0$ throughout the paper.} in this paper, where the only possible idempotent is the unit, we only provide a brief remark about the analogous construction in our framework. A priori, we can define for any ideal $I$ of the supercommutative $\Lambda-$algebra $QH(M,\Lambda)=SH_M(M,\Lambda)$, the notions of $SH$-invisibility, $SH$-visibility and $SH$-fullness with respect to $I$, e.g. $K$ is $SH$-invisible for $I$, if the submodule $I\cdot SH_M(K)=0$ and so on. Here we are using that $SH_M(K,\Lambda)$ is naturally a $SH_M(M,\Lambda)$-module via the restriction maps (which is a structure developed in the present paper).   

Nevertheless, let us show that a principal ideal of $QH^{even}(M,\Lambda)$ generated by an idempotent does have a special role in our story. First, note that using the techniques of Sections \ref{sec:prod_unit}, it can be shown that the connecting homomorphisms in the Mayer-Vietoris sequence of \cite{Va18} are in fact $QH(M,\Lambda)$-module homomorphisms. 

Let us use only the module structures over the even part of the quantum cohomology from now on (by restricting scalars). Let $I$ be an ideal of $QH^{even}(M,\Lambda)$. Then, we can multiply the Mayer-Vietoris sequence by this ideal (as a diagram of $QH^{even}(M,\Lambda)$-modules), and ask when the result is still an exact sequence.

We can think of the periodically extended Mayer-Vietoris sequence as a $\mathbb{Z}$-graded chain complex $A^*$ with trivial cohomology. Similarly, we have a short exact sequence of chain complexes $$0\to IA^*\to A^* \to A^*\otimes_{QH^{even}(M,\Lambda)} QH^{even}(M,\Lambda)/I\to 0.$$ It follows from the long exact sequence of cohomologies (see Lemma 2.6 and Corollary 3.3 of \cite{Bergman} for a far reaching generalization) that $IA^*$ has trivial cohomology (what we are after) if $QH^{even}(M,\Lambda)/I$ is flat over $QH^{even}(M,\Lambda)$, in other words if $I$ is pure (see \cite[\href{https://stacks.math.columbia.edu/tag/04PQ}{Section 04PQ}]{stacks-project}). Since $I$ is clearly finitely generated, this is equivalent to $I$ being generated by an idempotent \cite[\href{https://stacks.math.columbia.edu/tag/05KK}{Lemma 05KK}]{stacks-project}. 

In particular, for a given idempotent $a$ in $QH^{even}(M,\Lambda)$, the vector spaces $a\cdot SH_M(K,\Lambda)$ satisfy the same Mayer-Vietoris property. We say $K$ is $a$-$SH$-invisible if $SH_M(K,\Lambda)$ is annihilated by $a$, and define $a$-$SH$-visible and $a$-$SH$-full as before. For example, Corollary \ref{cor:full_is_vis} can be generalized to any such $a$. \qed
\end{rem}


We expect that $SH$-fullness implies $SH$-visibility, but we can only show a slightly weaker statement. 

\begin{cor}\label{cor:full_is_vis}
	If $K$ is $SH$-full, then any compact domain that contains $K$ in its interior is $SH$-visible.
\end{cor}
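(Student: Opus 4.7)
The plan is to apply Mayer-Vietoris to a cover $M = D \cup B$ where the second domain $B$ is arranged to lie in $M \setminus K$. Using that $K \subset \operatorname{int}(D)$, pick an intermediate compact domain $D_{-}$ with $K \subset \operatorname{int}(D_{-})$ and $D_{-} \subset \operatorname{int}(D)$, and set
$$B := \overline{M \setminus \operatorname{int}(D_{-})}.$$
Then $D \cup B = M$, the intersection $D \cap B = \overline{D \setminus \operatorname{int}(D_{-})}$ is a compact domain contained in $M \setminus K$, and the two boundaries satisfy $\partial D \cap \partial B = \partial D \cap \partial D_{-} = \emptyset$, because $D_{-} \subset \operatorname{int}(D)$.

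Since $K$ is $SH$-full, the compact subsets $B$ and $D \cap B$ of $M \setminus K$ are both $SH$-invisible. The Mayer-Vietoris exact sequence of \cite{Va18} (which applies precisely because of the disjoint-boundary condition above, via the same kind of approximation used in item~(5) of Theorem~\ref{thm:visib_properties}) reads
$$\cdots \to SH_M(M,\Lambda) \to SH_M(D,\Lambda) \oplus SH_M(B,\Lambda) \to SH_M(D \cap B,\Lambda) \to \cdots.$$
The vanishing of the two neighbouring terms collapses this to an isomorphism $SH_M(M,\Lambda) \cong SH_M(D,\Lambda)$. Since $M$ is $SH$-visible by item~(1), this gives $SH_M(D,\Lambda) \neq 0$, i.e.\ $D$ is $SH$-visible.

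The only delicate point is that what is needed is the full Mayer-Vietoris long exact sequence, not merely the vanishing consequence recorded as item~(5); this is the content of \cite[\S 4.3]{Va18}, so no new Floer-theoretic input is required. The choice of $D_{-}$ with the stated nesting is unproblematic: shrink a tubular neighbourhood of $K$ whose closure lies in $\operatorname{int}(D)$ and smooth, or equivalently apply the definition of ``approximated by compact domains'' to $K$ together with the openness of $\operatorname{int}(D)$.
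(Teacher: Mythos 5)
Your proof is correct and hits the same central idea as the paper: cover $M$ by $D$ and a complementary piece lying inside $M\setminus K$, then invoke the Mayer--Vietoris property to conclude. The paper's proof is more economical, however: it applies the contrapositive of Theorem~\ref{thm:visib_properties} item~(5) directly to the pair $D$ and $\overline{M\setminus D}$, which have equal boundaries but can be approximated by compact domains with disjoint boundaries exactly as item~(5) requires. If both $D$ and $\overline{M\setminus D}$ were $SH$-invisible, item~(5) would force $M = D \cup \overline{M\setminus D}$ to be $SH$-invisible, contradicting item~(1); since $\overline{M\setminus D}\subset M\setminus K$ is $SH$-invisible by $SH$-fullness, $D$ must be $SH$-visible. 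So your closing remark---that the full Mayer--Vietoris long exact sequence is needed and item~(5) would not suffice---is off: item~(5) alone gives the corollary. Your route does buy something extra: running the full exact sequence with $B$ and $D\cap B$ both $SH$-invisible gives the stronger conclusion that the restriction map $SH_M(M,\Lambda)\to SH_M(D,\Lambda)$ is an isomorphism, and your explicit intermediate domain $D_-$ is a clean substitute for the approximation condition in item~(5).
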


\begin{proof}
 Consider a compact domain $D$ containing $K$. Then the compact domains 
	$D$ and $\overline{M\setminus D}$ 	satisfy the conditions of Theorem~\ref{thm:visib_properties}, Item~(5). Since the second set is $SH$-invisible, the first one is $SH$-visible by Item (1). 
\end{proof}

\begin{example}
A simple closed curve in $S^2$ is $SH$-full if it divides $M$ into two disks of equal areas, using items (3) and (5) of Theorem~\ref{thm:visib_properties}, and $SH$-invisible otherwise. A tubular neighborhood of a non-contractible simple closed curve in the two torus is $SH$-visible, by item (6) of Theorem~\ref{thm:visib_properties},  but it is not $SH$-full (e.g. by item (iii) of Corollary~\ref{cor:diagram_vis_and_full}).
\end{example}

\subsection{Trivial Liouville cobordisms}
 
 \begin{defin}\label{defneck} A \textbf{neck} in $M$ is a symplectic embedding of a trivial (compact) Liouville cobordism $$\Sigma\times [1  -\alpha,s+\alpha]\subset M$$ for some $s>1$ and $\alpha>0$.  Here $\Sigma$ is a closed manifold equipped with a contact form, and $[1  -\alpha,s+\alpha]\times \Sigma$ is a subset of its symplectization.
The coordinate 
$$r\in [1-\alpha,s+\alpha]$$
is the exponential of the Liouville coordinate. That is, if
$\rho$ is the Lioville coordinate such that $\cL_{\partial/\partial \rho}\omega=\omega$, then
$r=e^\rho$. We, in addition, assume that the hypersurface $\Sigma\times\{1\}$ is separating in $M$. One last requirement is that the periodic orbits of the Reeb vector field on $\Sigma$ should be all transversely non-degenerate.\end{defin}

Given a seperating contact hypersurface $\Sigma$ in $M$, we can talk about its convex and concave fillings which are both compact domains with boundary $\Sigma$. 

If $\Sigma\times [1  -\alpha,s+\alpha]\subset M$ is a neck, we can make the convex filling $D$ of $\Sigma\times \{1\}$ larger by adding $\Sigma\times [1,s]\subset M$ to it and hence making it the convex filling $\tilde{D}$ of $\Sigma\times \{s\}$. There is a similar statement for the concave fillings. We are interested in the question: when is the restriction map $SH^*_M(\tilde{D},\Lambda)\to SH^*_M(D,\Lambda)$ an isomorphism?


\begin{defin}
	Suppose $c_1(M)=0$.
	A contact hypersurface $(\Sigma,\theta)$  is called \textbf{index bounded} if all of its Reeb orbits are contractible inside $M$, and for any integer $k$, the periods of the Reeb orbits of Conley-Zehnder index $k$ are bounded above and below. We say that a neck $\Sigma\times [1  -\alpha,s+\alpha]\subset M$ is index bounded if $\Sigma\times \{1\}$ is index bounded. Similarly, we call a Liouville subdomain of $M$ index bounded if its boundary is index bounded.
\end{defin}

\begin{prop}
	\label{prop:index_bd_collar_invt}
	Assume that $c_1(M)=0$,  $\Sigma\times [1  -\alpha,s+\alpha]\subset M$ is an index bounded neck  and $W\subset M$ is either the convex filling of $\Sigma\times \{1\}$ or the concave filling of $\Sigma\times \{s\}$. Then, there exists an isomorphism
	$$
	SH^*_M(W\cup \Sigma\times [1 ,s],\Lambda)\to SH^*_M(W,\Lambda).
	$$
\end{prop}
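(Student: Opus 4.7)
The plan is to establish this result via the \emph{contact Fukaya trick}: I construct cofinal systems of Hamiltonians computing both sides, whose $1$-periodic orbits lying in the neck are in canonical bijection, with matching Floer and continuation data on the neck after Liouville rescaling. Treating the convex filling case (the concave case is symmetric), write $\tilde W := W \cup \Sigma\times [1,s]$. I pick a sequence of non-critical Reeb periods $\lambda_n \to \infty$ and Hamiltonians $H_n, \tilde H_n \co S^1 \times M \to \bR$ that are $C^2$-small Morse on $W$, resp.\ on $\tilde W$; depend only on $r$ in the neck $\Sigma \times [1-\alpha, s+\alpha]$, with $H_n = f_n(r)$ satisfying $f_n' \equiv \lambda_n$ on most of $[1, s+\alpha-\epsilon]$ and $\tilde H_n = \tilde f_n(r)$ satisfying $\tilde f_n' \equiv \lambda_n$ on most of $[s, s+\alpha-\epsilon]$; and agree with a common large function on $M$ outside a fixed neighborhood of the neck. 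The $H_n$ are arranged to pointwise dominate the $\tilde H_n$, so that the restriction map is realized by the monotone continuation $CF^*(\tilde H_n) \to CF^*(H_n)$.

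Next, I verify that the non-constant $1$-periodic orbits of $H_n$ sit in the slope-$\lambda_n$ portion of the inner collar, and those of $\tilde H_n$ in the slope-$\lambda_n$ portion of the outer collar, in both cases in canonical bijection with Reeb orbits of $(\Sigma,\theta)$ of period at most $\lambda_n$; on constant orbits the Morse complexes on $W \subset \tilde W$ agree tautologically. The action of a paired neck orbit has the form $\lambda r_0 - f(r_0)$, where $r_0$ is the start of the linear portion of $f$ and $\lambda$ the Reeb period, so the action shift between paired generators is an explicit function of $s$ and $\lambda$. With a contact-type almost complex structure on the neck, the integrated maximum principle confines Floer and continuation trajectories whose endpoints lie in the neck, and the Liouville rescaling $r \mapsto r/s$ produces a bijection of such trajectories between the two sides. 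The monotone continuation realizing the restriction map is then identified, up to chain homotopy, with this bijection-induced chain map.

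The central obstacle, and the precise role of index boundedness, is to upgrade the chain-level comparison at a fixed slope to a quasi-isomorphism of the completed telescopes defining relative symplectic cohomology. Index boundedness guarantees that for each Conley--Zehnder degree $k$, Reeb orbits of that degree have periods in a fixed bounded range $[a_k, b_k]$; consequently the action shift $\lambda(s-1)$ between paired degree-$k$ generators is uniformly bounded by $b_k(s-1)$, and the set of degree-$k$ generators stabilises once $\lambda_n > b_k$. This degree-wise stabilisation, together with the uniform action bound, is exactly what is needed for the bijection to descend from the chain level to the $T$-adic Novikov completion of the telescope, so the restriction map is a quasi-isomorphism.
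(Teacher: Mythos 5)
Your proposal correctly identifies the contact Fukaya trick as the mechanism, and correctly locates the role of index boundedness (for each degree $k$ the energy shift between matched neck generators is $O(b_k(s-1))$, allowing the chain-level comparison to descend to the degree-wise Novikov completion). But a key technical ingredient is mishandled, and it is not cosmetic.

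You set up $f_n$ with slope $\lambda_n$ on ``most of $[1,s+\alpha-\epsilon]$'' and $\tilde f_n$ with slope $\lambda_n$ on ``most of $[s,s+\alpha-\epsilon]$'', and require $H_n$ and $\tilde H_n$ to agree outside a neighborhood of the neck. These requirements are incompatible. The total increase of $f_n$ over its steep interval is roughly $\lambda_n(s+\alpha-\epsilon-1)$, while the total increase of $\tilde f_n$ over its steep interval is roughly $\lambda_n(\alpha-\epsilon)$. So at the outer edge of the neck the two Hamiltonians differ by roughly $\lambda_n(s-1)\to\infty$. To glue them to a common function for $r\ge s+\alpha$, the family $\tilde f_n$ would need a ``catch-up'' region of slope $\gtrsim \lambda_n(s-1)/\epsilon$, which creates $1$-periodic orbits with no counterpart on the $H_n$-side and hence destroys the chain-level bijection. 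The paper's profile functions avoid this precisely by making the steep segment $[1,1+\epsilon]$ (and the matching segment $[s,\tilde s(1+\epsilon)]$, with $\tilde s$ chosen via $\tilde s(1+\epsilon)-s=\epsilon$ so it has the \emph{same length}) short, and following it by a long plateau of small slope $c$. The key estimate is that the additive offset between $f$ and its matching $F$ at the collar boundary is $O(1)$, uniformly in the large slope $K$ -- this is exactly Lemma 4.4 (\texttt{lem:match\_dif\_1}). Your construction makes this offset $O(\lambda_n)$, and the argument breaks.

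Two further (smaller) points. First, the paper does \emph{not} use an integrated maximum principle here, and it would not suffice: the crucial issue is Floer trajectories connecting a constant orbit in the interior of $W$ to a neck orbit, and those cannot be confined to the neck. Instead the paper constructs a \emph{global} diffeomorphism $\phi$ of $M$, compactly supported in the neck, which equals the Liouville rescaling $r\mapsto sr$ only near $r=1$; the profile $F$ is defined so that $\phi_*X_{f(r)}=X_{F(r)}$, and $\phi_*J$ of a cylindrical $J$ remains tame. This global pushforward, not a maximum principle, is what gives the bijection of all Floer solutions. You need this diffeomorphism; the naked rescaling $r\mapsto r/s$ does not extend to the identity outside the neck. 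Second, you assert that the resulting isomorphism agrees, up to homotopy, with the restriction map. The proposition only claims existence of an isomorphism, and the paper's remark after the statement explicitly says that upgrading to the restriction map is possible (via the adiabatic argument of Lemma 4.1.1 of \cite{VaThesis}) but is deliberately not carried out because it is substantially more technical. As written, your proposal claims this for free.

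In short: right strategy, wrong cofinal families, and the essential energy estimate (bounded offset between matched profiles) is missing.
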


\begin{rem}
The isomorphism is explicitly constructed using what we call the contact Fukaya trick. Using the argument in Lemma 4.1.1 of \cite{VaThesis} we can actually show that this is isomorphism is given by the restriction map as we had initially asked for. Proving this would make Section \ref{secfuk} even more technical and this strengthening does not help us in the paper.
\end{rem}

Using the well-known index computations for the Reeb orbits of contact boundaries of ellipsoids in $\mathbb{C}^n$ (e.g. Equation (2-6) from \cite{gutt}), we immediately obtain the following corollary.

\begin{cor}
	Suppose $c_1(M)=0$. Let us take disjoint embeddings of symplectic ellipsoids in $\mathbb{C}^n$ into $M^{2n}$. Then we obtain that their union $D$ is $SH-$invisible and hence that the closure of the complement of $D$ is $SH-$full. 
\end{cor}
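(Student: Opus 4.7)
The plan is to prove the corollary in three steps: (i) each embedded ellipsoid is $SH$-invisible, via Proposition \ref{prop:index_bd_collar_invt} and a shrinking argument; (ii) the disjoint union is $SH$-invisible by Mayer--Vietoris; (iii) the complement's closure is $SH$-full by the subset property. Let $\varphi_i\co E_i\hookrightarrow M$ be the given disjoint symplectic embeddings of closed ellipsoids and set $D = \bigcup_i \varphi_i(E_i)$.

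For step (i), note that all Reeb orbits on $\partial E_i\subset \bC^n$ bound discs in $E_i$ and hence in $M$ via $\varphi_i$, while Gutt's formula (Equation (2-6) of \cite{gutt}) shows that for every integer $k$ the orbits of Conley--Zehnder index $k$ have uniformly bounded periods; thus $\varphi_i(\partial E_i)\subset M$ is index bounded. Writing $E_i^{(r)}\subset E_i$ for the ellipsoid scaled by a factor $r\in(0,1)$, the standard fact that a symplectic embedding of a closed ellipsoid extends slightly outward lets us transport the radial Liouville flow through $\varphi_i$ to produce, for every such $r$, an index-bounded neck in $M$ connecting $\varphi_i(\partial E_i^{(r)})$ to $\varphi_i(\partial E_i)$. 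Proposition \ref{prop:index_bd_collar_invt} applied with $W = \varphi_i(E_i^{(r)})$ then yields
\[
SH^*_M(\varphi_i(E_i),\Lambda) \;\cong\; SH^*_M(\varphi_i(E_i^{(r)}),\Lambda).
\]
For $r$ sufficiently small, $\varphi_i(E_i^{(r)})$ is contained in an arbitrarily small ball around $\varphi_i(0)$ which can be displaced off itself inside $\varphi_i(E_i)$ by a suitable compactly supported Hamiltonian on $\bC^n$ transported through $\varphi_i$ and extended by zero to $M$, then lifted trivially to $M\times T^*S^1$. Hence $\varphi_i(E_i^{(r)})$ is stably displaceable, Theorem \ref{thm:visib_properties}(3) gives $SH^*_M(\varphi_i(E_i^{(r)}),\Lambda) = 0$, and the isomorphism above forces $SH^*_M(\varphi_i(E_i),\Lambda) = 0$.

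For step (ii), since the $\varphi_i(E_i)$ are finitely many pairwise disjoint compact subsets of $M$, each pair has positive separation, and each can be approximated by a decreasing sequence of compact domains whose boundaries are, at every stage, pairwise disjoint from those of the other ellipsoids' approximations. Induction on the number of ellipsoids via Theorem \ref{thm:visib_properties}(5) then shows that $D$ is $SH$-invisible. For step (iii), any compact $K\subset M$ contained in the interior of $D$ is a compact subset of the $SH$-invisible set $D$, so Theorem \ref{thm:visib_properties}(4) makes $K$ itself $SH$-invisible; thus $\overline{M\setminus D}$ is $SH$-full by definition.

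The main delicate point is the setup of the index-bounded neck in step (i): one must verify that $\varphi_i$ extends outward by a positive amount past $\partial E_i$ (standard for symplectic embeddings of closed compact domains) and that Gutt's index bounds persist under Liouville rescaling (immediate from the formula). The displacement in the second half of step (i) is routine once $r$ is small enough that two translated copies of $\varphi_i(E_i^{(r)})$ fit disjointly inside $\varphi_i(E_i)$.
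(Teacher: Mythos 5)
Your proof is correct and fleshes out exactly the argument the paper compresses into one sentence: Gutt's index formula gives index-boundedness of each ellipsoid boundary, Proposition \ref{prop:index_bd_collar_invt} identifies the relative symplectic cohomology of the embedded ellipsoid with that of an arbitrarily small (hence displaceable) rescaling, and then items (3), (5), (4) of Theorem \ref{thm:visib_properties} finish. The one technical point you (like the paper) pass over silently is that Definition \ref{defneck} requires transversally non-degenerate Reeb orbits, which fails for rational ellipsoids; this is harmlessly fixed by extending $\varphi_i$ slightly and running your argument on a nearby irrational ellipsoid containing $E_i$, after which item (4) gives invisibility of $\varphi_i(E_i)$ itself.
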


\begin{rem}
This is far from the best statement we could prove, but it makes the point, namely that here the images of the ellipsoids do not have to be displaceable, but they are $SH$-invisible. We could prove the statement of the corollary, with the same ease, for convex or concave toric domains $\mathbb{C}^n$ (see Sections 2.2 and 2.3 of \cite{gutt}). The statement is also true for convex domains in $\mathbb{C}^n$ but this requires a slightly modified argument using the fact that convex domains are dynamically convex, rather than index bounded (which might also be true but we do not know it in general). We do not spell this out because the symplectic consequences of $SH$-fullness in this case was already covered by Ishikawa's superheaviness result from \cite{ishikawa}.
\end{rem}



Our next step is to discuss skeleta in symplectic manifolds. In the present paper they will be the main source of examples to which we apply our non-displaceability results.

\subsection{Giroux divisors}
We refer the reader to \cite{tehrani,mclean} for the notion of an {\it SC divisor} $D=\bigcup D_i$ in a symplectic manifold $(M,\omega)$. Briefly, this is a union of cleanly intersecting codimension~two symplectic submanifolds $D_i$, such that the intersection orientations coincide with the symplectic orientations on all intersection strata.

As repopularized by McLean, a consequence of the $h$-principle for open symplectic embeddings of codimension~two is the following proposition.

\begin{prop}[Eliashberg-Mishachev \cite{EM02}, McLean \cite{mclean}] 
\label{prop:D_Stably_disp}	
	Let $D$ be an SC divisor in $M$. Then, $D$ is stably displaceable. \qed
\end{prop}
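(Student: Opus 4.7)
The plan is to follow the approach of Eliashberg--Mishachev and McLean via the $h$-principle for codimension-$2$ symplectic submanifolds. The first, and crucial, observation is topological: because $S^1\subset T^*S^1$ is Lagrangian, it has vanishing self-intersection in $T^*S^1$, and hence $[D\times S^1]\cdot [D\times S^1]=[D]^2\cdot [S^1]^2=0$ in $H_*(M\times T^*S^1)$. So the $3$-codimensional submanifold $D\times S^1$ admits a \emph{smooth} displacing isotopy in the $6$-dimensional ambient $M\times T^*S^1$; indeed one can take a naive translation in the cotangent fiber direction.

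The second step is to promote this smooth isotopy to a compactly supported \emph{symplectic} isotopy. Naive $p$-translation is symplectic but fails to be Hamiltonian, because it has nontrivial flux in the $H^1(S^1)$ summand of $H^1(M\times T^*S^1)$. To remedy this, one invokes the Eliashberg--Mishachev $h$-principle for codimension-$2$ symplectic embeddings: any smooth isotopy of a codim-$2$ symplectic submanifold that admits a formal symplectic lift can be $C^0$-approximated by a genuine symplectic isotopy, provided the formal data exists. Because our target $D\times S^1$ sits as a coisotropic in a manifold with two extra dimensions beyond those strictly needed, the formal obstruction is vacuous, so the $h$-principle applies.

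Third, one upgrades symplectic to Hamiltonian. The flux of the resulting symplectic isotopy lives in $H^1(M\times T^*S^1;\mathbb{R})=H^1(M)\oplus H^1(S^1)$, and can be killed by composing with an explicit compactly supported Hamiltonian isotopy supported in the $T^*S^1$-direction: the extra $T^*S^1$-factor is precisely what provides the freedom to cancel flux without destroying the displacement. This last step is the essential payoff of ``stabilizing'' rather than asking for displaceability in $M$ itself (which of course fails for e.g.\ a complex line in $\mathbb{C}P^2$).

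The principal technical hurdle is the normal-crossings structure of $D$: the $h$-principle is cleanest for smooth submanifolds, whereas $D$ is a stratified object with codimension-$2k$ intersection strata. McLean handles this by induction on the stratification, displacing the deepest (smallest) strata first inside tubular neighborhoods that are compatible with the clean intersections, and then extending outwards. At each stage one uses that the normal bundle of each intersection stratum $D_{i_1}\cap\cdots\cap D_{i_k}$ is canonically a direct sum of symplectic line bundles, so the formal symplectic data required by the $h$-principle is available stratum by stratum. Patching these together compatibly, while maintaining the symplectic condition in overlaps, is the main work of the argument.
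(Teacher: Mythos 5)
The paper gives no proof of this Proposition — it carries a terminal $\qed$ and is credited to \cite{EM02} and \cite{mclean} — so your sketch can only be assessed against what those sources actually prove. Your outline gestures at the right machinery, but the central step does not type-check. The submanifold $D\times S^1\subset M\times T^*S^1$ has codimension \emph{three}, not two, and $\omega$ restricts to it with a one-dimensional kernel along the $S^1$-factor, so it is neither a symplectic submanifold nor coisotropic: the symplectic orthogonal $T(D\times S^1)^{\omega}$ contains the symplectic normal bundle $\nu_D$ of $D$ inside $M$, which is transverse to $D\times S^1$. Consequently the $h$-principle for codimension-two symplectic embeddings from \cite{EM02} simply does not apply to the object you are trying to isotope; your attempt to rescue this by calling $D\times S^1$ coisotropic is a false statement, not a reduction. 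The extra freedom furnished by $T^*S^1$ is real, but it is exploited differently: roughly, one trades the fiberwise $S^1$-rotation of the symplectic normal disk bundle of $D$ against the base coordinate $q$ on $T^*S^1$ to obtain a globally defined Hamiltonian whose flow pushes $D\times S^1$ off itself, a construction that is obstructed inside $M$ alone whenever $c_1(\nu_D)\neq 0$. This is not an instance of the codimension-two $h$-principle you quote.

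Two further steps would also fail as written. Compactly supported Hamiltonian isotopies have vanishing flux, so composing your symplectic isotopy with one cannot change its flux class; to cancel flux you would need to compose with a \emph{non}-Hamiltonian symplectic isotopy of opposite flux, and then re-verify that the composite still displaces, which is precisely the nontrivial point. Finally, the proposed induction on depth of strata does not make sense as stated: Hamiltonian displaceability passes to subsets, not to supersets, so having displaced a deep stratum $D_{i_1}\cap\cdots\cap D_{i_k}$ gives no leverage over the larger stratum $D_{i_1}$. McLean's treatment instead builds a single system of compatible tubular neighborhoods (a regularization) of the entire SC divisor and produces one displacing Hamiltonian adapted to that stratified structure, rather than bootstrapping outward from the deepest stratum.
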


\begin{rem}
To compare, it follows from Theorem~\ref{thm:visib_properties} that $M\subset M$ is not stably displaceable. One may be interested in the intermediate behaviour: which neighborhoods of $D$ are stably displaceable? This is a hard question; for example, it is not known whether a disk  inside a two-torus enclosing more than half of the total area is stably displaceable. 
\end{rem}

We introduce the following definition.

\begin{defin}\label{defgir}
	A \textbf{Giroux divisor} $D=\bigcup D_i$ is an SC divisor with the property that there exist integers $w_i>0$ and a real number $c>0$ such that 
	$$\sum w_i[D_i]=c\cdot PD[\omega]\ \in\ H_2(M).$$
\end{defin}

Below is a structural result about the complements of Giroux divisors. The result follows from the work of McLean \cite{mclean}, along with a construction we learned from \cite{Gi17}. A sketch proof is given in the Appendix\footnote{Added in first revision: We could actually replace the argument in Appendix with a more elementary one that relies on the relative deRham isomorphism (see Section 2.1.2 of \cite{becker}). This would also allow us to work with SC divisors which satisfy $\sum r_i[D_i]=PD[\omega]\ \in\ H_2(M)$ with real numbers $r_i>0$. We still believe that our argument is useful as it provides a lot more control over the Liouville structure produced by the next proposition.}.

\begin{prop}
	\label{prop:giroux_skeleton}
	Let $D\subset M$ be a Giroux divisor, then there exists a Liouville subdomain $W\subset M\setminus D$ such that:
	\begin{itemize}
		\item $\overline{M\setminus W}$ is stably displaceable,
		
		\item $\overline{M\setminus W}$ deformation retracts onto $D$.
	\end{itemize}
Moreover, if $c_1(M)=0$, one can choose $W$ to be index bounded.
\end{prop}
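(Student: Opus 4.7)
The plan is to construct a Liouville primitive of $\omega$ on $M\setminus D$ whose Liouville vector field ``blows up'' at $D$ at a rate prescribed by the weights $w_i$, following \cite{mclean} with the refinement of \cite{Gi17}. The Liouville subdomain $W$ then arises as a large sublevel set of a function on $M\setminus D$ diverging near $D$, and the two asserted properties of $\overline{M\setminus W}$ reduce to geometric facts about tubular neighborhoods of $D$.

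First, I would set up local primitives stratum by stratum. Near a smooth point of $D_i$, choose a Hermitian tubular neighborhood so that $\omega=\pi^*\omega|_{D_i}+\tfrac12\, d(r_i^2\alpha_i)$ in polar fiber coordinates $(r_i,\phi_i)$, with $\alpha_i$ a connection 1-form on the unit normal circle bundle. The local primitive $\theta_i^{\rm loc}=\tfrac{w_i}{2}r_i^2\alpha_i$ has Liouville vector field $\tfrac{w_i r_i}{2}\partial_{r_i}$, pointing outward from $D_i$. Near a deeper stratum $D_I=\bigcap_{i\in I}D_i$, the SC condition gives a symplectic normal polydisk neighborhood with a splitting, and one takes $\sum_{i\in I}\theta_i^{\rm loc}$. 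The Giroux numerical condition $\sum w_i[D_i]=c\cdot PD[\omega]$ says exactly that, after rescaling to $c=1$, the 2-form $\omega-\sum_i w_i \tau_i$ is exact for Thom representatives $\tau_i$ concentrated near $D_i$; patching this global primitive into the local pieces produces a 1-form $\theta$ on $M\setminus D$ with $d\theta=\omega$ and the prescribed radial blowup near each stratum.

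Next, take a smooth exhaustion function $\rho\co M\setminus D\to\bR$ that equals $\sum_{i\in I}r_i^{-2}$ in each local model near $D_I$ and is bounded outside a neighborhood of $D$. For a sufficiently large regular value $c$, set $W=\{\rho\le c\}$. The outward-pointing Liouville vector field on $\partial W$ makes $W$ a Liouville subdomain. The complement $\overline{M\setminus W}$ is the union of normal polydisk slabs over the strata of $D$, so it deformation retracts onto $D$ via the radial collapse $r_i\to 0$ (the retractions on different strata are compatible thanks to the SC splitting of normal bundles). For stable displaceability, one exploits that the $h$-principle argument underlying Proposition~\ref{prop:D_Stably_disp} in fact produces a Hamiltonian displacing a whole tubular neighborhood of $D\times S^1$, not just $D\times S^1$ itself; taking $c$ large enough places $\overline{M\setminus W}$ inside such a neighborhood.

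The main technical obstacle is the index bounded clause. Here I would arrange $\partial W$ to look, near each stratum $D_I$, like the boundary of a weighted normal polydisk, so that the induced contact form is $\sum_{i\in I}\tfrac{w_i}{2}r_i^2\alpha_i$ restricted to $\partial W$. The Reeb flow is then an explicit combination of rotations in the $\alpha_i$-directions, at angular speeds determined by the $w_i$ and the normal curvatures; closed Reeb orbits organize into Morse--Bott families parameterized by integer winding tuples $(k_i)_{i\in I}$, with periods linear in these tuples and weights. Under the assumption $c_1(M)=0$ the Conley--Zehnder index is a well-defined integer, expressed (as in the ellipsoid formula of \cite{gutt}) as a weighted sum of the $k_i$ plus a bounded Morse--Bott correction. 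Fixing a CZ index thus constrains $(k_i)$ to lie in a finite set, forcing the periods into a bounded window. The delicate part, to be carried out in the Appendix, is choosing the Liouville structure carefully (including a small perturbation to make orbits non-degenerate along the top stratum, while preserving the ellipsoid-like description across all deeper strata simultaneously) so that this index computation is valid globally on $\partial W$.
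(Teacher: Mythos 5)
Your proposal takes a genuinely different and much longer route than the paper. The paper's own argument is essentially two lines of ideas: it invokes McLean's Proposition~6.17, which already packages the existence of a Liouville subdomain $W\subset M\setminus D$ whose complement deformation retracts to $D$, \emph{and} the index-boundedness of $W$ when $c_1(M)=0$, provided one supplies a primitive $\theta$ of $\omega$ on $M\setminus D$ whose relative class $[(\omega,\theta)]\in H^2(M,M\setminus D)$ equals $\sum(w_i/c)[D_i]$. The remaining task is then purely to produce such a $\theta$, and the paper does this elegantly: the Giroux condition makes the pre-quantization line bundle $L$ isomorphic to $O_M(D)$, whence $L$ has a section $s$ vanishing to order $w_i$ on $D_i$, and $\theta:=(s/|s|)^*\theta'$ for a connection form $\theta'$ on the unit circle bundle of $L$ does the job. (Stable displaceability of $\overline{M\setminus W}$ then follows because it is contained in an arbitrarily small neighborhood of $D$, and Proposition~\ref{prop:D_Stably_disp} displaces such a neighborhood.) In contrast, you attempt to rebuild McLean's machinery from scratch: local polar primitives on each stratum, a patching of these into a global primitive, an exhaustion function defining $W$, a hands-on deformation retraction, and a Morse--Bott/CZ-index analysis of the Reeb dynamics on a weighted polydisk boundary.

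The concrete gap in your version is precisely the index-boundedness clause, which you defer (``the delicate part, to be carried out in the Appendix''). This is not a routine deferral: controlling the Conley--Zehnder indices of Reeb orbits near the deeper strata $D_I$, simultaneously with the non-degeneracy perturbations along the top stratum and the compatibility of the local normal-crossing models, is exactly the technical content of McLean's paper. Your sketch correctly identifies the mechanism (ellipsoid-like formulas forcing periods into a bounded window for each fixed index), but it does not establish the compatibility of the local models across strata, nor that the perturbation can be made globally consistent without introducing unbounded families of fixed-index orbits. The patching of the local primitives $\theta_i^{\rm loc}$ into a global Liouville form also needs the SC local models to be arranged to a common standard form; you assert this without argument. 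In short, your route is conceptually sound but amounts to re-proving a theorem that the paper instead cites; if you want to go this way, you would need to reproduce the nontrivial arguments of McLean's Propositions in full, rather than gesture at them.
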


\begin{rem}
	\label{rem:giroux_kahler}
	Suppose $M$ is a complex projective variety and $D$ is an ample normal crossings divisor. Equip $M$ with the symplectic structure given by the curvature $2$-form of $\mathcal{O}(D)$ for the Chern connection of an appropriate Hermitian metric (using Kodaira embedding theorem). Then, $D$ becomes an example of a Giroux divisor. Taking the canonical section $s$ of $\mathcal{O}(D)$ vanishing at $D$, we can equip $M-D$ with the Liouville form $-d^c log\|s\|$. We note that the construction in the Appendix is a generalization of this classical one.
	
	Now assume $c_1(M)=0$. The complement of $D$ in $M$ is exhausted by Liouville (in fact Weinstein) domains $\|s\|\geq c>0$. 	By Proposition~\ref{prop:D_Stably_disp},
	for a sufficiently large such domain $W$, $M\setminus W$ is stably displaceable; and if in addition $D$ is smooth, then $W$ is also index bounded. 
So $W$ satisfies all conditions in Proposition~\ref{prop:giroux_skeleton}. This statement generalizes to smooth Giroux divisors in general symplectic manifolds with the Liouville structure from the Appendix. Also, see Example \ref{examplediskbundle}.
	
	 If $D$ is not smooth, then Proposition \ref{prop:giroux_skeleton} modifies these standard Liouville structures to a Liouville deformation equivalent ``nice'' Liouville structure in the sense of McLean to achieve index boundedness. \qed
\end{rem}

%
%
%

\begin{defin}\label{defsket}
Assume $c_1(M)=0$, and let $D\subset M$ be a Giroux divisor. By a \textbf{skeleton} of $M\setminus D$ we mean  the skeleton of any index bounded Liouville subdomain $W\subset M\setminus D$ for which $\overline{M\setminus W}$ is stably displaceable.
\end{defin}

\begin{example}\label{examplediskbundle}
	Assume that $D$ is smooth, and $[D]=\frac{1}{\pi} PD([\omega])$. Then
	$$
	M=K\sqcup U_1(D)
	$$
	where $K$ is a skeleton of $M\setminus D$ and $U_1(D)$ is an open unit disk symplectic bundle over $D$ as in Figure~\ref{fig:compl_d}, see Corollary 8 of \cite{Gi17}, cf.~\cite{Bi01} in the K\"ahler case. Then for $0<r_2<r_1<1$, $\hat W=M\setminus U_{r_2}$ is the result of neck attachment to $W=M\setminus U_{r_1}$. 
	
	\begin{figure}[h]
		\includegraphics[]{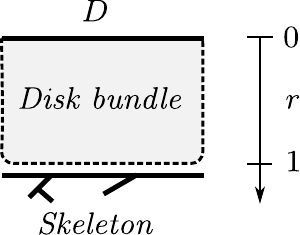}
		\caption{Decomposition of a symplectic manifold into a disk bundle over a smooth symplectic divisor dual to $[\omega]$, and a skeleton.}
		\label{fig:compl_d}
	\end{figure}
\end{example}

Putting the above results together, we obtain the following super-rigidity result for skeleta.

\begin{thm}
	\label{th:sk_superrigid}
	$M$ is a closed symplectic manifold. Assume that $c_1(M)=0$,  $D\subset M$ is a Giroux divisor as in Definition \ref{defgir} and $K\subset M$ is a skeleton of $M\setminus D$ as in Definition \ref{defsket}. Then, $K$ is $SH$-full inside $M$.
\end{thm}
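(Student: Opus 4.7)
The plan is to derive $SH$-fullness of $K$ by producing, for every compact set $C\subset M\setminus K$, an $SH$-invisible compact set containing $C$; once this is done, Item (4) of Theorem \ref{thm:visib_properties} will finish the job. The ingredients at hand are Proposition \ref{prop:giroux_skeleton}, which supplies an index bounded Liouville subdomain $W\subset M\setminus D$ having $K$ as its skeleton and $\overline{M\setminus W}$ stably displaceable; Item (3) of Theorem \ref{thm:visib_properties}, which promotes stable displaceability to $SH$-invisibility; and, most crucially, Proposition \ref{prop:index_bd_collar_invt} (the contact Fukaya trick), which I will use to propagate $SH$-invisibility across a Liouville collar sitting on the concave side of $\partial W$.

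Given a compact $C\subset M\setminus K$, I would use the negative Liouville flow on $W$ to shrink $W$ to a Liouville subdomain $\tilde W\subset W$ whose boundary still encloses every point of $C$ from the inside. Concretely, if $\phi_t$ denotes the Liouville flow (outward on $\partial W$), set $\tilde W := \phi_{-t_0}(W)$ for some $t_0>0$. Since $K=\bigcap_{t>0}\phi_{-t}(W)$ and $C$ is compact with $C\cap K=\emptyset$, choosing $t_0$ large enough yields $\tilde W\cap C=\emptyset$. For every finite $t_0$, $\partial \tilde W$ is a smooth contact type hypersurface contactomorphic to $\partial W$ with contact form rescaled by $e^{-t_0}$, so Reeb orbits, their periods, and their Conley-Zehnder indices in $M$ correspond under the Liouville isotopy (here the assumption $c_1(M)=0$ is used to make sense of Conley-Zehnder indices absolutely). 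In particular, $\partial\tilde W$ is still index bounded, and the concatenation of the collar $W\setminus\tilde W\simeq \partial\tilde W\times[1,s]$ (with $s=e^{t_0}$) with a small Weinstein collar past $\partial W$ in $M$ assembles into an index bounded neck $\partial\tilde W\times[1-\alpha,s+\alpha]\subset M$ in the sense of Definition \ref{defneck}.

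I would then apply Proposition \ref{prop:index_bd_collar_invt} to this neck, taking the ``$W$'' of that proposition to be the concave filling $\overline{M\setminus W}$ of $\partial\tilde W\times\{s\}$. The conclusion is an isomorphism
$$
SH^*_M\bigl(\overline{M\setminus\tilde W},\Lambda\bigr)\ =\ SH^*_M\bigl(\overline{M\setminus W}\cup \partial\tilde W\times[1,s],\Lambda\bigr)\ \cong\ SH^*_M\bigl(\overline{M\setminus W},\Lambda\bigr).
$$
By Proposition \ref{prop:giroux_skeleton} and Theorem \ref{thm:visib_properties}(3), the right-hand side vanishes, so $\overline{M\setminus \tilde W}$ is $SH$-invisible. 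Since $C\subset\overline{M\setminus\tilde W}$, Theorem \ref{thm:visib_properties}(4) tells me $C$ is $SH$-invisible, completing the proof of $SH$-fullness of $K$.

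The main obstacle I anticipate is the neck bookkeeping in the second paragraph: one has to check that pushing $\partial W$ inward along the Liouville flow and extending slightly past $\partial W$ via a Weinstein collar really yields a neck meeting every requirement of Definition \ref{defneck} (separation of $\partial\tilde W$ in $M$, transverse non-degeneracy of the rescaled Reeb flow, contractibility of Reeb orbits inside $M$, and the two-sided bounds on periods at each Conley-Zehnder degree). Each of these is a standard consequence of the construction, but they must all be verified carefully to legitimately invoke the contact Fukaya trick; the rest of the argument is then essentially formal.
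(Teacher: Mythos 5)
Your proposal is correct and matches the paper's proof: the paper takes the sequence $W_i := \phi_{-i}(W)$, observes that $\overline{M\setminus W_i}$ is a neck attachment to the stably displaceable (hence $SH$-invisible) set $\overline{M\setminus W}$, applies Proposition~\ref{prop:index_bd_collar_invt}, and concludes via monotonicity (Item~(4)). Your variant of shrinking $W$ by a single large time $t_0$ rather than iterating over integers is an inessential cosmetic difference, and the neck-bookkeeping worries you raise are exactly the ones the paper implicitly absorbs into its setup.
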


\begin{proof}
	Let $W\subset M\setminus D$ be as in Proposition~\ref{prop:giroux_skeleton}, and $K$ the skeleton of $W$. Let $W_i$ be the image of $W$ under its time-$i$ negative Liouville flow; this gives the nested sequence
	$$
	W=W_0\supset W_1\supset W_2\ldots \supset W_i\supset\ldots\quad \supset K.
	$$
	Recall that $\overline{M\setminus W}$ is stably displaceable. Hence $W$ is $SH$-full. For any $i\geq 1$, $\overline{M\setminus W_i}$ is the result of neck attachment to $\overline{M\setminus W}$, so by Proposition~\ref{prop:index_bd_collar_invt}, $\overline{M\setminus W_i}$ is $SH$-invisible. Using Theorem~\ref{thm:visib_properties}, Item (3) and the definition of the skeleton we obtain the result.
\end{proof}

\begin{cor}\label{mainresult}
	Under the conditions of Theorem~\ref{th:sk_superrigid}:
	\begin{itemize}
		\item $K$ is stably non-displaceable from itself inside $M$;
		\item $K$ is strongly non-displaceable from any tautologically unobstructed Lagrangian submanifold $L
		\subset M$.
	\end{itemize}	
\end{cor}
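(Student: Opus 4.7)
The plan is to derive both bullets essentially as immediate consequences of Theorem~\ref{th:sk_superrigid} combined with the tools already assembled. By that theorem, $K$ is $SH$-full inside $M$, so we only need to translate $SH$-fullness into the required non-displaceability statements.

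For the second bullet (strong non-displaceability from any tautologically unobstructed Lagrangian $L\subset M$), the argument is a direct citation: this is exactly implication~(iii) of Corollary~\ref{cor:diagram_vis_and_full} applied to the $SH$-full subset $K$. No new work is needed.

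For the first bullet (stable non-displaceability of $K$ from itself), the direct citation of Corollary~\ref{cor:diagram_vis_and_full}~(i) does not immediately apply, because $K$ is only known to be $SH$-full, not $SH$-visible. I would argue by contradiction, combined with a compactness/thickening step. Assume $K$ is stably displaceable, so there is a Hamiltonian diffeomorphism $\phi$ of $M\times T^*S^1$ with $\phi(K\times S^1)\cap (K\times S^1)=\emptyset$. Since $K\times S^1$ is compact, a sufficiently small compact domain neighbourhood $D\supset K$ in $M$ still satisfies $\phi(D\times S^1)\cap (D\times S^1)=\emptyset$, i.e.~$D$ itself is stably displaceable. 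On the other hand, by Corollary~\ref{cor:full_is_vis}, any compact domain containing the $SH$-full set $K$ in its interior is $SH$-visible, hence $D$ is $SH$-visible; by Corollary~\ref{cor:diagram_vis_and_full}~(i) it is stably non-displaceable from itself. This contradicts the previous sentence, so $K$ must be stably non-displaceable from itself.

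There is no real obstacle here — everything reduces to results already proved in the introduction. The only mildly subtle point worth stating carefully in the write-up is the compactness argument that passes from $K$ being stably displaceable to a compact domain neighbourhood being stably displaceable, which is a standard application of the tubular neighbourhood of a compact set together with continuity of $\phi$.
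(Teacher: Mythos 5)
Your proof is correct and essentially matches the paper's one-line citation of Theorem~\ref{th:sk_superrigid} and Corollary~\ref{cor:diagram_vis_and_full}. You have rightly spelled out the small implicit step for the first bullet --- thickening $K$ to a compact domain $D$ and invoking Corollary~\ref{cor:full_is_vis} together with Theorem~\ref{thm:visib_properties}(3) --- which is genuinely needed since $SH$-fullness is not known to imply $SH$-visibility of $K$ itself, so Corollary~\ref{cor:diagram_vis_and_full}(i) cannot be applied to $K$ directly.
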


\begin{proof}
	This immediately follows from Theorem~\ref{th:sk_superrigid} and Corollary~\ref{cor:diagram_vis_and_full}.
\end{proof}

\begin{rem}
Recall that Casals and Murphy \cite{CM19} found examples of smooth affine varieties which are flexible (but are not subcritical). Using the uniruledness considerations by Zhou \cite{zzhou}, see Theorem 5.10, it follows that they do not admit smooth Calabi-Yau compactifications by normal crossings divisors. We do not know if this can be proved using purely algebro-geometric techniques (for example by computing their log Kodaira dimensions). We want to comment on a couple of incomplete strategies to prove this fact using our methods.

Assume that we are in the situation of Remark~\ref{rem:giroux_kahler}:
 $M$ is a complex projective Calabi-Yau and $D$ is an ample normal crossings divisor. Let $K$ be a  skeleton of $M\setminus D$ as in Definition \ref{defsket}.
Our theorem implies that $K$  is stably non-displaceable in $M$.
In particular, it is stably non-displaceable in $W$. If we knew that $W$ was Weinstein, given the expected results on $h$-principles for symplectic embeddings of flexible Weinstein domains, it would follow that $W$ is not flexible. 
Here we also need the fact  that  the product of a flexible Weinstein manifold with $T^*S^1$ is still flexible \cite{Eli17}. Unfortunately, we do not know if the structure on $W$ is Weinstein if $D$ is not smooth. In that case, since the notion of flexibility is currently not defined for Liouville manifolds, we cannot go further with this strategy.

Here is an alternative approach. Our argument shows that $SH_M(W,\Lambda)\neq 0$. We expect that the unpublished results of Borman-Sheridan (currently under preparation with the addition of the second author of the present paper) imply that the Viterbo symplectic cohomology $SH(\widehat{W},\mathbb{Z})\neq 0$. Note the important fact that Viterbo symplectic cohomology is a Liouville deformation invariant. Therefore, we would obtain the desired result from the first paragraph as flexible Weinstein domains have vanishing Viterbo symplectic cohomology.

\qed
\end{rem}

\begin{rem}
We point out that there are examples of Giroux divisors and $W$, which are not even deformation equivalent to a Weinstein domain, see \cite[Proposition~9]{Gi17} for an example\footnote{There is a construction of a connected Giroux divisor in $T^6$ in the second author's blog: \url{https://symplectosaurus.wordpress.com/2020/03/06/roux-divisors}. In a comment to that post Denis Auroux showed that this also gives rise to a smooth and connected Giroux divisor in  $T^6$.} (this is to be contrasted with Donaldson type divisors \cite[Theorems~1,~2]{Gi17}). The nature of the skeleta in these cases is rather mysterious and we are not aware of any rigidity results for them in the literature. Our results apply to these cases equally well.
\end{rem}

\begin{rem} It is possible to prove non-trivial special cases of the second bullet point of Corollary \ref{mainresult} by more elementary means.  Assume that $M$ is a complex projective variety of real dimension $2n$ and $D$ is a smooth ample divisor, let $U=M-D$. Then it can be shown using weak and hard Lefschetz theorems, and the Gysin sequence that $H^n_{prim}(M)\to H^n(U)$ is injective\footnote{Please see this discussion for counter-examples when $D$ is not smooth but irreducible: \url{https://mathoverflow.net/questions/299211/cohomology-groups-of-the-complement-of-an-ample-singular-divisor.}}. Note that here we are using $[\omega]:=PD[D]$ as our Kahler class, and recall that a deRham cohomology class $A\in H^n(M)$ is called primitive if $A\wedge [\omega]=0$.


Take a smooth oriented Lagrangian $L$ in $M$. Assume that it does not intersect the skeleton. This implies that the integral over $L$ of any closed compactly supported form in $U$ (extended by $0$ to $M$) is zero, since we can always find a cohomologous form supported arbitrarily close to the skeleton with the help of the Liouville flow.

Now let $\alpha$ be a Thom form for $L$ inside $M$, meaning for any closed  $n$-form $\beta$ on $M$: $\int_L \beta=\int_M \alpha\wedge\beta$. It is easy to see that $[\alpha]$ is a primitive class using that if it were not, there would have had to be a closed form $\gamma$ such that $\int_M\alpha\wedge\omega\wedge\gamma\neq 0$ by Poincare duality (i.e. existence of Hodge star). This is impossible because $\omega$ vanishes identically on $L$.

Moreover, the image of $[\alpha]$ under $H^n_{prim}(M)\to H^n(U)$ is zero by Poincare duality for $U$. Namely, if it were not, we could find a compactly supported closed $n$-form on $U$ which would pair non-trivially with it. This would be a contradiction to our previous findings. The upshot is that $[L]=0$.

Therefore, a smooth homologically essential Lagrangian can not displaced from the skeleton by a symplectomorphism. In fact being a little more careful we could replace symplectomorphism with a  homeomorphism that preserves $[\omega]$. We feel it is worth exploring how far such a proof (with no reference to $J$-holomorphic curves) can be pushed. For example is there an extension to the case when $M$ is Calabi-Yau, but $D$ is allowed to have normal crossing singularities?\qed
\end{rem}

\begin{rem}
We expect that the techniques of Ishikawa from \cite{ishikawa} also can be used to prove Corollary \ref{mainresult}. For the experts we mention that this would hinge upon some index computations for the constant periodic orbits ``on the divisor". There would also be some technical details for allowing a finite number of non-constant periodic orbits (of multiples of a distance to the divisor type Hamiltonian) which live near the ``boundary" of $K$ and has the right CZ index to affect the spectral number of the unit. We hope that an expert will check the validity of our expectation.
\end{rem}

%

\begin{example}
	\label{ex:two_torus}
Let $M$ be the two-torus and $D$ a point. In this case, one can take a skeleton $K$ of $M\setminus D$ to be the union of a meridian and a longtitude. This is an example falling under Corollary ~\ref{mainresult}. For a non-example, one can take $M=S^2$ and $D$ a point.
\end{example}

%
%

Let us also state the following theorem, which follows immediately from the techniques of this paper.
\begin{thm}
	\label{thm:higher_genus}
Assume that $M$ is a closed surface of genus at least $1$ and let $K$ be the complement of a finite disjoint union of open subsets of $M$ all diffeomorphic to disks. Then $K$ is $SH$-full as a subset of $M$.\qed
\end{thm}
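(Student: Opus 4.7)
I plan to adapt the proof of Theorem~\ref{th:sk_superrigid}. Choose one point $p_i$ in each open disk $U_i$ and set $D=\{p_1,\dots,p_n\}$. A zero-dimensional submanifold of a surface is automatically an SC divisor, and $[D]=n[\mathrm{pt}]\in H_0(M)$ is a positive multiple of $PD[\omega]$, so $D$ is a Giroux divisor in the sense of Definition~\ref{defgir}.

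Next I would construct a Liouville subdomain $W\subset M\setminus D$ (via the construction behind Proposition~\ref{prop:giroux_skeleton} and Remark~\ref{rem:giroux_kahler}) arranged so that both (i) $\overline{M\setminus W}\subset\bigsqcup_i U_i$, equivalently $K\subset W$, and (ii) the skeleton $\Sigma$ of $W$ lies entirely inside $K$. Property~(i) is obtained by shrinking the closed neighborhoods of the $p_i$ that make up $\overline{M\setminus W}$ so they fit inside each $U_i$; this is a purely local matter near the points of $D$. Property~(ii) uses the two-dimensional flexibility of Weinstein structures: any $1$-dimensional graph spine of $M\setminus D$ can be realized as the skeleton of some Liouville form, and since $K$ is itself a deformation retract of $M\setminus D$, such a spine can be chosen inside $K$.

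With (i) and (ii) in hand the argument of Theorem~\ref{th:sk_superrigid} applies almost verbatim. Let $W=W_0\supset W_1\supset\cdots\supset \Sigma$ be the nested sequence produced by the negative Liouville flow. Item~(3) of Theorem~\ref{thm:visib_properties} together with stable displaceability of $\overline{M\setminus W}$ give $SH$-invisibility of $\overline{M\setminus W}$, and Proposition~\ref{prop:index_bd_collar_invt} propagates this to each $\overline{M\setminus W_i}$. Any compact $C\subset M\setminus K$ is disjoint from $\Sigma$ by~(ii), so $C\subset M\setminus W_i$ for some $i$ large, hence $C\subset\overline{M\setminus W_i}$ and is $SH$-invisible by Item~(4). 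Therefore $K$ is $SH$-full.

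The main obstacle is the hypothesis $c_1(M)=0$ in Propositions~\ref{prop:giroux_skeleton} and~\ref{prop:index_bd_collar_invt}, which holds for $M=T^2$ but fails when $g\ge 2$. All the relevant Floer-theoretic work happens inside $W\subset M\setminus D$, and $M\setminus D$ has $H^2=0$, so its first Chern class vanishes; one then verifies that the Conley--Zehnder index computations and the contact Fukaya trick of Section~\ref{secfuk} go through unchanged in this setting. Alternatively, one may simply replace the $\mathbb{Z}$-grading by a $\mathbb{Z}/2$-grading throughout, which is harmless for the vanishing statement $SH_M(C,\Lambda)=0$ that defines $SH$-invisibility.
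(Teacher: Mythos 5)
Your overall strategy --- take $D$ to be a finite set of points, one in each disk, build an index bounded Liouville subdomain $W\supset K$, and run the argument of Theorem~\ref{th:sk_superrigid} --- is exactly the route the paper points to in the remark following the statement. The paper itself only sketches the proof, saying it ``follows immediately from the techniques of this paper,'' so your impulse to flesh out the Giroux-divisor framing is reasonable. However, the crucial step is precisely the one you leave vague at the end, and both of the fixes you offer for the failure of $c_1(M)=0$ when $g\ge 2$ have genuine problems.

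Your ``alternative'' of replacing the $\mathbb{Z}$-grading by a $\mathbb{Z}/2$-grading does not work. Index boundedness is an inherently $\mathbb{Z}$-graded condition: it demands that for each integer $k$, the periods of Reeb orbits of Conley--Zehnder index $k$ be bounded. With only a $\mathbb{Z}/2$-grading there are infinitely many orbits of each parity, with periods tending to $+\infty$, so the condition becomes vacuous (or rather, automatically false) and Proposition~\ref{prop:index_bd_collar_invt} has no content. The proof of that proposition relies on degree-wise boundedness of the valuation shift $\Delta(\gamma)$ in the contact Fukaya trick, and this is exactly what you lose when the grading is collapsed. Your main suggestion, that $c_1(M\setminus D)=0$ suffices, is closer but also misses the point: the Floer cylinders contributing to $SH_M(K)$ live in all of $M$ and can certainly pass through the disks $U_i$ containing the points of $D$, so a trivialization of the canonical bundle over $M\setminus D$ does not control the degree of the Floer differential or the continuation maps. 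The correct hypothesis, which the paper invokes, is asphericity of $M$: for a surface of genus $\ge 1$ one has $\pi_2(M)=0$, so the Conley--Zehnder index of a contractible orbit is independent of the choice of capping disk, the contractible summand of the Floer complex carries a well-defined $\mathbb{Z}$-grading, and all the orbits that arise here (being parallel copies of circles that bound disks) are contractible. Your proof should replace the $c_1(M\setminus D)$ discussion by this observation. As a smaller point, the claim that the skeleton of $W$ can be arranged inside $K$ (your property (ii)) is asserted using general 2D flexibility but would need to be checked within the specific class of Liouville structures produced by Proposition~\ref{prop:giroux_skeleton}; the paper's suggested shortcut --- prove $SH_M(\mathbb{D},\Lambda)=0$ for every closed disk directly, then glue with items (4) and (5) of Theorem~\ref{thm:visib_properties} --- sidesteps this entirely.
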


\begin{rem}
That such $K$ is superheavy is also a special case of the main result of \cite{ishikawa}.
\end{rem}

Note that $M$ as in Theorem~\ref{thm:higher_genus} is not Calabi-Yau, but it is aspherical. Hence, we can consider the summand of relative cohomology corresponding to the contractible orbits and use the displaceability of sufficiently small disks along with the contact Fukaya trick to prove the result just the same.

One can also directly show that a smooth closed disk of any size $\mathbb{D}\subset M$ satisfies $SH_M(\mathbb{D},\Lambda) = 0$ (which implies Theorem~\ref{thm:higher_genus} by the Mayer-Vietoris property) without relying on the contact Fukaya trick. This argument still uses the asphericity of $M$ and relies on a topological elimination of non-local contributions to the differential of the relative symplectic cochain complex of $\mathbb{D}$ using positivity of intersection of a Floer solution with a point invariant under the Hamiltonian flows in question. Another ingredient is the integrated maximum principle of \cite{ASei10}. We leave this as a non-trivial exercise to the interested reader.

As Theorem~\ref{thm:higher_genus} suggests (by taking $D$ a finite set of points, and $K\subset M$ any skeleton of $M\setminus D$), Corollary \ref{mainresult} should be more general, in particular, should hold in many non-Calabi-Yau cases. On the other hand, its failure is in some sense even more interesting. We plan to explore both directions in future work.

\subsection{Algebraic structures for relative symplectic cohomology}
To prove Items~(4) and~(6) from Theorem~\ref{thm:visib_properties}, we need some general structural results regarding relative symplectic cohomology.
 
First, we show that $SH_M(K;\Lambda)$ is actually a unital $\Lambda$-algebra,  and the units are functorial under  restriction maps. While this is a totally expected structure, setting up the unit in the context of relative symplectic cohomology is not that straightforward and requires  technical care. Our solution is to define the unit via {\it raised symplectic cohomology},  a modification of the original construction which allows us to use strictly positive Hamiltonians as perturbations.

Second, let $L\subset M$ be a tautologically unobstructed Lagrangian submanifold satisfying technical conditions related to gradings and signs, and $K\subset M$ be a compact subset. We introduce relative Lagrangian (self-)Floer cohomology, $HF^*_M(L,K,\Lambda)$ which is also a unital $\Lambda$-algebra. We also introduce the closed-open map
$$
\cC\cO\co SH^*_M(K,\Lambda)\to HF^*(L,K,\Lambda).
$$
We shall prove that it is a map of $\Lambda$-vector spaces respecting units, but it is easy to modify our arguments and show that it is a ring map.

\begin{rem}
In this paper, we do not discuss most of the expected properties of the ring structures, as we have no use for them. Namely, we do not prove that our ring structures do not depend on the choices, that they are associative, or in the closed string case that they are commutative. We do not expect any of these proofs to require any new ideas. On the other hand the algebraic formalism that we use is a cumbersome one to write these proofs rigorously. The second author is currently working with Mohammed Abouzaid and Yoel Groman on a more natural (albeit quite a bit more technical) formalism to construct algebraic structures on relative invariants (as of now this excludes units). It seems like a better idea to prove general properties in that framework and then show that the product structure we produce here agrees with the one which is constructed in this forthcoming work.
\end{rem}

Relative Lagrangian (self-)Floer cohomology also satisfies the Mayer-Vietoris property. The relevant statement for this paper is the last item in Proposition \ref{prop:lag_analog_thesis} below, but one can construct a Mayer-Vietoris sequence with the same level of generality as in \cite{Va18}. With this in mind, let us mention another consequence of $SH$-fullness.
\begin{cor}
Let $L\subset M$ be a tautologically unobstructed oriented Lagrangian submanifold with a relative Pin structure, and $K$ be $SH$-full. Let $D$ be any compact domain containing $K$, then the restriction map $HF(L,\Lambda)\to HF(L,D,\Lambda)$ is an isomorphism.
\end{cor}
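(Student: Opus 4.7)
My plan is to adapt the argument of Corollary~\ref{cor:full_is_vis} by replacing closed-string Mayer--Vietoris with its Lagrangian counterpart, using the unital closed-open map $\cC\cO$ as a bridge. The first step I would carry out is a \emph{vanishing transfer}: if a compact set $S\subset M$ is $SH$-invisible, then $HF^*_M(L,S,\Lambda)=0$. Indeed, $1=0$ in $SH^*_M(S,\Lambda)=0$; since $\cC\cO\co SH^*_M(S,\Lambda)\to HF^*_M(L,S,\Lambda)$ sends the unit to the unit, the unit of the target also vanishes, and since $HF^*_M(L,S,\Lambda)$ is itself a unital $\Lambda$-algebra this forces it to be zero. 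Applying this to our setup: because $K$ is $SH$-full, every compact subset of $M\setminus K$ is $SH$-invisible and hence also has vanishing $HF^*_M(L,-,\Lambda)$.

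Next, I would set up a Mayer--Vietoris decomposition in the spirit of Theorem~\ref{thm:visib_properties}(5), where the "boundaries disjoint" condition rules out the naive choice $B=\overline{M\setminus D}$. Reading "compact domain containing $K$" as $K\subset\operatorname{int}(D)$, I would pick a compact domain $D_\epsilon$ with $K\subset\operatorname{int}(D_\epsilon)\subset D_\epsilon\subset\operatorname{int}(D)$ and decompose $M=A\cup B$ with $A=D$ and $B=\overline{M\setminus\operatorname{int}(D_\epsilon)}$. Both $B$ and $A\cap B=D\setminus\operatorname{int}(D_\epsilon)$ lie in $M\setminus K$, so $HF^*_M(L,B,\Lambda)=HF^*_M(L,A\cap B,\Lambda)=0$ by the previous step, while $\partial A\cap\partial B=\partial D\cap\partial D_\epsilon=\emptyset$ by construction. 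Feeding these vanishings into the Mayer--Vietoris long exact sequence for $HF^*_M(L,-,\Lambda)$ promised in Proposition~\ref{prop:lag_analog_thesis}, I would read off an isomorphism $HF^*(L,\Lambda)=HF^*_M(L,M,\Lambda)\xrightarrow{\cong}HF^*_M(L,D,\Lambda)$, which is the restriction map by construction, since it arises as the $A$-component of the map $HF^*_M(L,M)\to HF^*_M(L,A)\oplus HF^*_M(L,B)$.

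The main obstacle is structural rather than computational: the whole argument hinges on two nontrivial inputs that the paper must deliver beforehand, namely a well-defined, unital closed-open map $\cC\cO$ and a Mayer--Vietoris property for relative Lagrangian self-Floer cohomology. Once these are in hand, the rest of the proof reduces to the classical "$1=0$ in a unital ring" observation combined with a single exact-sequence chase. A minor nuisance I would handle separately is the borderline case where $K$ meets $\partial D$, which I would dispose of either by declaring it outside the intended reading of the hypothesis, or by approximating $D$ from outside by compact domains containing $K$ in their interior and using naturality of the restriction maps in the limit.
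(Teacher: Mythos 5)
Your strategy is exactly the paper's: transfer $SH$-vanishing to $HF^*(L,-)$-vanishing via the unit and the closed-open map (Propositions~\ref{prop:unit_sh}, \ref{prop:zero_unit_hf_sh}, \ref{prop:unit_hf}), then apply the Lagrangian Mayer--Vietoris sequence. However, your decision to introduce $D_\epsilon$ rests on a misreading of which Mayer--Vietoris statement is in play. The ``disjoint boundaries'' condition you invoke comes from Theorem~\ref{thm:visib_properties}(5), which is a closed-string invisibility statement about unions, not the sequence used here. The Lagrangian Mayer--Vietoris sequence of Proposition~\ref{prop:lag_analog_thesis}(4) is stated for compact domains $K_1,K_2$ with \emph{equal} boundaries covering $M$; the ``naive'' choice $K_1=D$, $K_2=\overline{M\setminus D}$ fits it exactly (common boundary $\partial D$, intersection $\partial D$). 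Reading ``compact domain containing $K$'' as $K\subset\operatorname{int}(D)$, both $\overline{M\setminus D}$ and $\partial D$ are compact subsets of $M\setminus K$, hence $SH$-invisible, hence $HF^*(L,-,\Lambda)$-acyclic by your vanishing transfer, and the sequence immediately gives that the restriction $HF^*(L,\Lambda)\to HF^*(L,D,\Lambda)$ is an isomorphism. Your decomposition $A=D$, $B=\overline{M\setminus\operatorname{int}(D_\epsilon)}$ has $\partial A\cap\partial B=\emptyset$ rather than $\partial A=\partial B$, so it does \emph{not} satisfy the hypotheses of Proposition~\ref{prop:lag_analog_thesis}(4) as stated; it would instead require the more general Mayer--Vietoris sequence which the paper only alludes to (``same level of generality as in \cite{Va18}'') without spelling out. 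So your argument is correct in substance, but the extra auxiliary domain $D_\epsilon$ is an unnecessary detour caused by ruling out the configuration the paper actually intends, and it quietly shifts the burden onto an unstated version of the Mayer--Vietoris sequence.
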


The proof is similar to the proof of Theorem \ref{thm:visib_properties} Item (6) given at the end of Section \ref{sslag}.

\subsection*{Structure of the paper}
 In Section 2, we review the construction of relative symplectic cohomology and introduce its open string version. We reduce the proofs of properties (4) and (6) of Theorem \ref{thm:visib_properties} to the existence of units with special properties. In Section 3, we discuss some chain level algebra which will be used to put algebra structures and define units on $H(\widehat{tel}(\mathcal{C}))$, where $\mathcal{C}$ is a $1$-ray. In Section 4, we apply the contact Fukaya trick to prove Proposition \ref{prop:index_bd_collar_invt}. In Section 5, we define the unit in relative symplectic cohomology and show that it is preserved under restriction maps. In Section 6, we define the unit in relative Lagrangian cohomology and construct closed-open maps. Then, we show that units are preserved under restriction maps and closed-open string maps.

In Appendix A, we give a proof of Proposition \ref{prop:giroux_skeleton}.

\subsection*{Convention} In what follows chain complex means what is conventionally called cochain complex - all of our differentials in the $\mathbb{Z}$-graded context increase degree, yet we still call them chain complex for convenience.

\subsection*{Acknowledgements}
We have drawn inspiration from the work of Mark~McLean. We thank him for his comprehensive papers. We also thank Paul Seidel for giving us comments on an earlier version of the Introduction to the paper. Finally, we thank KIAS for giving us a chance to collaborate at their institution for a week at the end of 2019 Summer.

DT was partially supported by the Simons
Foundation under grant \#385573, Simons Collaboration on Homological Mirror Symmetry.

\section{Relative Floer theoretic invariants}
\label{sec:rel_sh}
 In this section we go over the construction of relative symplectic cohomology from ~\cite{Va18}, and the  Lagrangian Floer cohomology version which will also be used in this paper. We prove the missing Items~(4) and~(6) from Theorem~\ref{thm:visib_properties}, modulo important technical propositions which are later proved in Sections~\ref{sec:prod_unit} and~\ref{sec:lag}.

\subsection{Reminder on relative symplectic cohomology}\label{ssremindersh}
Let $M$ be a closed symplectic manifold. Let us assume that $c_1(M)=0$ for simplicity, and also fix a trivialization of the canonical bundle of $M$. Note that this choice does not play a role in the grading of contractible orbits. We refer the reader to \cite{Va18} for the construction without the Calabi-Yau assumption, which requires virtual techniques.

Let $K\subset M$ be a compact subset. We call the following datum an acceleration datum for $K$:
\begin{itemize}
\item $H_1\leq H_2\leq\ldots$ a monotone sequence of non-degenerate one-periodic Hamiltonians $H_i\co M\times S^1\to \bR$ cofinal among functions satisfying $H\mid_{S^1\times K}<0$. The latter condition is equivalent to
$$
H_i(x,t)\xrightarrow[i\to+\infty]{}\begin{cases}
0,& x\in K,\\
+\infty,& x\notin K.
\end{cases}
$$
\item A monotone homotopy of Hamiltonians $H_{i,i+1}:[i,i+1]\times M\times S^1\to\mathbb{R}$, for all $i$, which is equal to $H_i$ and $H_{i+1}$ in a neighborhood of the corresponding end points.
\end{itemize}

One can combine an acceleration datum into a single family of time-dependent Hamiltonians $H_s\co M\times S^1\to \bR$, $s\in \bR_{\geq 1}$. We also fix a Morse function on $[0,1]$ with critical values at the end points once and for all, which turns a $[0,1]$-family of Hamiltonians to a $(-\infty,\infty)$-family which is then used to write down the Floer equations.

%

Given an acceleration datum and a choice of a generic time dependent almost complex structure $J$,
Hamiltonian Floer theory provides a $1$-ray of  Floer chain complexes over $\Lambda_{\geq 0}$, called a {\it Floer 1-ray}: \begin{align*}
\mathcal{C}(H_r,J):= CF^*(H_1)\to CF^*(H_2)\to\ldots 
\end{align*}
Each $CF^*(H_i)$ is the Floer complex of $H_i$ over $\Lambda_{\ge 0}$, with the usual Floer differential. 
The horizontal arrows are Floer continuation maps defined using the monotone homotopies appearing in the acceleration datum. Recall that a cylinder $u$ contributing to a Floer differential or a continuation map, does so with Novikov weight 
$$
T^{E_{top}(u)}
$$
where
\begin{equation}
\label{eq:E_top}
E_{top}(u)=\int_{S^1}\gamma_{out}^* H_{out}\,dt-\int_{S^1}\gamma_{in}^* H_{in}\,dt+\omega(u),
\end{equation}
$\gamma_{out}$, $\gamma_{in}$ are the asymptotic orbits of $u$, and $H_{out}$, $H_{in}$ are the Hamiltonians at the correspondind ends. (For Floer differentials, $H_{out}=H_{in}=H_i$ and for continuation maps, $H_{out}=H_{i+1}$, $H_{in}=H_i$ for some $i$.)

One defines the $\Lambda_{\geq 0}$-chain complex $$tel(\mathcal{C}(H_s,J)) \text{ and } \widehat{tel}(\mathcal{C}(H_s,J)),$$
as in \cite{Va18}. see also Section~\ref{sec:alg} below. We stress that we always take the degreewise completion.


For more details on the following proposition (and a proof) see Proposition 3.3.2 from \cite{Va18}.
\begin{prop}
	For two different choices of acceleration data for $K$ and almost complex structures, $H_s,J$ and $H_s',J'$,
there is a canonical $\Lambda_{\geq 0}$-module map from $H^*(\widehat{tel}(\mathcal{C}(H_s,J)))$ to $H^*(\widehat{tel}(\mathcal{C}(H'_s,J')))$ defined using monotone continuation maps. Moreover, these maps are isomorphisms.
\qed
\end{prop}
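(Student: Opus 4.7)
The plan is to follow the standard invariance pattern for Hamiltonian Floer cohomology, upgraded carefully so that all maps survive the degreewise Novikov completion. I would organize the argument around a third acceleration datum that dominates both of the given ones, because cofinality of the sequences makes direct termwise comparisons awkward but comparison after upgrading is very clean.

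First, I would construct a third acceleration datum $(H_s'',J'')$ for $K$ together with subsequences $\{i_k\},\{i_k'\}$ such that at each level $k$ there are monotone homotopies $H_{i_k}\leadsto H_{i_k}''$ and $H'_{i'_k}\leadsto H_{i_k}''$. Such an $(H_s'',J'')$ exists because the defining cofinality condition ($H<0$ on $S^1\times K$, $H\to+\infty$ off $K$) is closed under pointwise maxima followed by smoothing. The associated monotone Floer continuation maps $CF^*(H_{i_k})\to CF^*(H_{i_k}'')$ are well-defined over $\Lambda_{\ge 0}$ because the topological-energy formula \eqref{eq:E_top} together with monotonicity of the homotopies keeps all exponents non-negative.

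Second, I would assemble these levelwise continuation maps into a chain map between $1$-rays $\mathcal{C}(H_s,J)\to\mathcal{C}(H_s'',J'')$ and then into a chain map between their telescopes. The telescope construction is designed exactly for this: the data of a chain map of $1$-rays is a levelwise chain map of Floer complexes together with homotopies intertwining the vertical continuation maps in each square, and these homotopies are built from parametrized Floer moduli spaces that interpolate between the two obvious compositions of monotone homotopies. Gromov compactness in this setup is guaranteed by monotonicity and $\Lambda_{\ge 0}$-coefficients. This produces a $\Lambda_{\ge 0}$-linear chain map $tel(\mathcal{C}(H_s,J))\to tel(\mathcal{C}(H_s'',J''))$ that is bounded with respect to the Novikov filtration, hence extends to a map $\widehat{tel}(\mathcal{C}(H_s,J))\to\widehat{tel}(\mathcal{C}(H_s'',J''))$. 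Running the same argument from $(H_s',J')$ gives a second map into the common target.

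Third, for the isomorphism claim I would establish the key lemma: if $(H_s'',J'')$ dominates $(H_s,J)$ in the sense above, then the induced map on completed telescopes is a quasi-isomorphism. The idea is that the mapping cone can itself be identified with $\widehat{tel}$ of a $1$-ray whose levelwise terms are acyclic after completion, because by choosing the homotopies carefully each composition of a ``comparison'' map with its inverse comparison is homotopic to the Floer continuation inside one of the two original $1$-rays, and the telescope of any cofinal subsequence computes the same completed cohomology. Applying this lemma to both $(H_s,J)\to(H_s'',J'')$ and $(H_s',J')\to(H_s'',J'')$ yields two isomorphisms to $H^*(\widehat{tel}(\mathcal{C}(H_s'',J'')))$, and the desired canonical isomorphism is obtained by composing one with the inverse of the other.

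Fourth, canonicality (independence of $(H_s'',J'')$ and of the intermediate homotopies) would be checked by the usual second-order argument: any two choices of dominating datum are themselves dominated by a further choice, and the pentagon of induced maps commutes up to chain homotopy built from moduli spaces of Floer cylinders with two interpolation parameters. The main obstacle in the entire argument is not the construction itself but making sure that every chain homotopy produced is bounded in Novikov filtration so that it descends to $\widehat{tel}$; this requires a uniform energy estimate on parametrized moduli spaces, and is the one place where the monotonicity of the homotopies in the acceleration data is used in an essential way.
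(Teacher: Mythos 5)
The overall strategy you propose---compare both 1-rays through a third acceleration datum and invoke the cofinal-subsequence lemma, then check canonicality by a second-order argument---is sound and close to the approach in Varolgunes's thesis and in Proposition 3.3.2 of \cite{Va18} (the paper does not give its own proof; it cites that reference). One cosmetic remark: the third family $(H_s'', J'')$ is not strictly necessary. Cofinality of $\{H_i\}$ and $\{H'_j\}$ for the same set $K$ already lets you interleave them directly, passing to subsequences so that $H_{i_1}\leq H'_{j_1}\leq H_{i_2}\leq H'_{j_2}\leq\cdots$, and this alternating 1-ray has (subsequences of) $\cC(H_s,J)$ and $\cC(H'_s,J')$ as cofinal sub-1-rays. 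Your route through a dominating $(H_s'',J'')$ is logically fine, just a touch longer, and it also leaves you having to say why the composite iso is independent of $(H_s'',J'')$, which the direct interleaving sidesteps.

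There is, however, a genuine flaw in your third step. You claim that the mapping cone of $\widehat{tel}(\cC(H_s,J))\to\widehat{tel}(\cC(H_s'',J''))$ can be identified with $\widehat{tel}$ of a 1-ray whose \emph{levelwise} terms are acyclic after completion. This is false. The levelwise terms of the cone 1-ray are $\mathrm{Cone}\bigl(CF^*(H_{i_k})\to CF^*(H''_{i_k})\bigr)$; these are finitely generated free complexes over $\Lambda_{\geq 0}$ that are already complete, and they are certainly not acyclic, since the two Floer complexes are supported on different orbit sets and the continuation map between them is in no sense a levelwise quasi-isomorphism. What is actually true---and what you correctly gesture at but then do not use correctly---is the \emph{cofinal subsequence} lemma: for a 1-ray $\cC$ and a cofinal subsequence $\cC_{\mathrm{sub}}$, the induced map $tel(\cC_{\mathrm{sub}})\to tel(\cC)$ is a quasi-isomorphism, and it remains one after degreewise completion because the underlying modules are free (this last fact is stated explicitly in Section~\ref{sec:alg}). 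The correct argument therefore proceeds by showing that both $\widehat{tel}(\cC(H_s,J))$ and $\widehat{tel}(\cC(H'_s,J'))$ are quasi-isomorphic to the $\widehat{tel}$ of the interleaved 1-ray via the cofinal subsequence lemma, not by extracting a levelwise-acyclic cone 1-ray. You should excise the acyclicity claim and replace step three with this comparison of telescopes over cofinal subsequences; the rest of your outline (well-definedness of the continuation maps over $\Lambda_{\geq 0}$, boundedness in the Novikov filtration, the second-order canonicality argument) then goes through.
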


Hence, we define \begin{align*} 
SH_M^*(K):=H^*(\widehat{tel}(\mathcal{C}(H_s,J))).
\end{align*}

In what follows we will only be interested in the torsion-free part of $SH_M^*(K)$: \begin{align}\label{eqdefsh}SH_M^*(K,\Lambda):=SH_M^*(K)\otimes\Lambda.
\end{align}

\begin{rem}
	Typically in Floer theory, one requires $J$ to be compatible with $\omega$. In fact, the theory works the same under the following weaker assumption: $J$ is tamed by $\omega$, and compatible in a neighbourhood of all 1-periodic orbits of all of the $H_i$. This greater flexibility will be later used in setting up the contact Fukaya trick.
\end{rem}

\begin{rem}
	
	When $M$ is  Calabi-Yau,
	Hamiltonian Floer theory on $M$ can be set up using classical techniques, see Lecture 3 of \cite{sal}, using generic choices of almost complex structures and Hamiltonians, which is what we are doing here. Note that in the Calabi-Yau case the genericity does not fail even if we use higher-dimensional parametric families of Floer equations. Indeed, the loci of Chern number~zero  $J$-holomorphic spheres are always codimension $4$ in $M$ times any parameter space. Therefore they generically do not interact with rigid or one-dimensional moduli spaces of Floer solutions. Note that here the key property is that there are no negative Chern number spheres, and therefore it is not possible to converge to configurations that involve Floer solutions that belong to higher dimensional families. The details of how transversality can be achieved by perturbing almost complex structures (for fixed Hamiltonian data) was worked out in Appendix B of \cite{mclean}, which we will also be using here.
	
\end{rem}

\begin{rem}\label{remanalytic}Let us now explain a slightly different way of obtaining $SH_M^*(K)\otimes\Lambda$ using a more analytic language. Note that $\Lambda$ is a non-archimedean valued field. We define $CF^*(H_i,\Lambda)$ to be the (non-archimedean) normed/valued\footnote{It is more convenient to talk about valuations for us. For translation to the more familiar language, as it is used in functional analysis for the archimeden case: note that norm is given by $e^{-val}.$} $\Lambda$-vector space freely generated by the $1$-periodic orbits which all have valuation $0$ (and the valuation of a $\Lambda$-linear combination of these basis elements is defined by taking the infimum of the valuations of the summands). Similarly, we define $tel(\mathcal{C}(H_s,J,\Lambda))$ as a valued $\Lambda$-vector space, which is notably infinite dimensional. Note that as $\Lambda$-vector spaces $tel(\mathcal{C}(H_s,J,\Lambda))$ and $tel(\mathcal{C}(H_s,J))\otimes \Lambda$ are canonically isomorphic. Now we take the completion of $tel(\mathcal{C}(H_s,J,\Lambda))$ and obtain a $\Lambda$-Banach space: $\widehat{tel}(\mathcal{C}(H_s,J,\Lambda))$. The homology of this complex is canonically isomorphic to $SH_M^*(K)\otimes\Lambda$.\qed
\end{rem}

Recall from Proposition 3.3.3 of  \cite{Va18}
	that if $K\subset K'\subset M$, there is a canonical restriction map of $\Lambda_{\geq 0}$-modules
$$
SH_M^*(K')\to SH_M^*(K).
$$

\subsection{Units and visibility}

We move on to new statements, no longer from \cite{Va18}. The next proposition  will be proved in Section~\ref{sec:prod_unit}.

\begin{prop}
	\label{prop:unit_sh}
		For every compact set $K\subset M$, there is a distinguished element $1_K\in SH_M(K,\Lambda)$, called the unit, with the following properties.
\begin{itemize}	
\item  $SH_M(K,\Lambda)=0$ if and only if $1_K=0$. 
\item Restriction maps send units to units.
\end{itemize}
\end{prop}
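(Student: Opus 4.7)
The plan is to construct $1_K$ via a PSS-type map at the chain level and to derive the two bullet points from an accompanying ring structure on $SH_M(K,\Lambda)$. The basic difficulty is a sign mismatch: a standard PSS half-cylinder from a point $p\in M$ to a $1$-periodic orbit $\gamma$ of $H$ has topological energy $\int_{S^1}\gamma^*H\,dt+\omega(u)$, which is non-negative precisely when $H\geq 0$; but the Hamiltonians in an acceleration datum for $K$ are \emph{negative} on $K$, so the naive PSS construction does not land in $\Lambda_{\geq 0}$. To repair this, I would use \emph{raised symplectic cohomology} as flagged in the introduction: replace each $H_i$ by $\tilde H_i:=H_i+c_i$ with constants $c_i\to+\infty$ chosen so that $\tilde H_i\geq 0$ and $c_{i+1}\geq c_i$, and analogously raise the monotone homotopies. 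Because a uniform constant shift leaves the $1$-periodic orbits unchanged and only shifts the topological energy of a Floer cylinder by the difference of the constants at the two ends, the raised $1$-ray is canonically quasi-isomorphic, after tensoring with $\Lambda$, to $\mathcal{C}(H_s,J)$.

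For each raised non-degenerate $\tilde H_i\geq 0$, I would define a PSS cocycle $e_i\in CF^*(\tilde H_i)$ by counting Floer half-cylinders asymptotic to $\gamma$ and passing through a generic point (or, equivalently, by coupling to a Morse cocycle representing the fundamental class of $M$). Non-negativity of $\tilde H_i$ together with tameness of $J$ guarantees that $e_i$ is well-defined over $\Lambda_{\geq 0}$ and that the standard breaking analysis shows $de_i=0$. Parametrized PSS moduli spaces over the raised monotone homotopies $\tilde H_{i,i+1}$ produce bounding cochains $b_{i,i+1}$ witnessing $c_{i,i+1}(e_i)\simeq e_{i+1}$, where $c_{i,i+1}$ is the continuation map. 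Assembled via the telescope construction, the $e_i$ and $b_{i,i+1}$ yield a cocycle $\tilde e\in\widehat{tel}(\mathcal{C}(\tilde H_s,J))$, and I would define $1_K$ to be the image of $[\tilde e]$ under the comparison isomorphism with $SH_M(K,\Lambda)$.

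The first bullet point would follow once I construct, in parallel, a product on $SH_M(K,\Lambda)$ via pair-of-pants moduli on the raised Hamiltonians (again well-defined because positivity gives non-negative energy) and verify that $1_K$ is a two-sided identity. With that in place, $1_K=0$ forces every class $x$ to satisfy $x=1_K\cdot x=0$, and conversely the vanishing of the whole module tautologically kills $1_K$. For the second bullet point, given $K\subset K'$ with respective cofinal acceleration data, I would interpolate the two raised telescopes by monotone continuation maps (these realize the restriction map of Proposition~3.3.3 of \cite{Va18}) and observe that the PSS construction is natural with respect to such continuations: a one-parameter family of PSS moduli spaces over the interpolating homotopy provides a chain homotopy identifying the image of the unit cocycle for $K'$ with the unit cocycle for $K$.

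The hard part will be the bookkeeping required to transport a chain-level PSS cocycle through the telescope and through the raised/unraised comparison while keeping every Novikov valuation non-negative, so that the resulting class genuinely lies in $SH_M(K)$ before inverting $T$. In particular, promoting a single cocycle in each $CF^*(\tilde H_i)$ to a cocycle in the completed telescope requires a coherent system of higher homotopies among PSS moduli spaces for the monotone homotopies $\tilde H_{i,i+1}$, and making those choices compatible with the pair-of-pants product, so that $1_K$ is actually a unit rather than merely a distinguished class, is where most of the technical effort of Section~\ref{sec:prod_unit} will go.
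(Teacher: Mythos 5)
Your overall strategy is the one the paper pursues in Section~\ref{sec:prod_unit}: pass to ``raised'' cohomology so that the Hamiltonians become everywhere positive, build the unit as a PSS-type cocycle (a cap, or $\mathbb{C}P^1\setminus\{z_0\}$) coherently across the telescope, establish unitality against a pair-of-pants product, and deduce both bullet points. However, there is a genuine gap in the specific mechanism you use to pass to positive Hamiltonians.

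You propose replacing $H_i$ by $\tilde H_i=H_i+c_i$ with $c_i\to+\infty$ and claim that the raised $1$-ray is ``canonically quasi-isomorphic, after tensoring with $\Lambda$, to $\mathcal{C}(H_s,J)$''. This is false. The natural comparison map multiplies the $i$-th slice by $T^{c_i}$; it is valuation non-decreasing and extends to the degreewise completion, but its inverse $T^{-c_i}$ is unbounded once $c_i\to\infty$ and does \emph{not} extend to $\widehat{tel}$. More than that, the completed telescopes may have genuinely different cohomology: for instance the $1$-ray $\Lambda_{\geq 0}\xrightarrow{\mathrm{id}}\Lambda_{\geq 0}\xrightarrow{\mathrm{id}}\cdots$ has $H^0(\widehat{tel})\otimes\Lambda\cong\Lambda$, whereas shifting by $c_i=i$ produces the ray with all continuation maps $T^{1}$, whose completed telescope is acyclic (this is exactly the mechanism behind vanishing of $SH_M$ for displaceable sets). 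So your cocycle $\tilde e$ lives in the wrong object, and there is no canonical way to transport it to $SH_M(K,\Lambda)$. The fix, which is what the paper does, is to keep the raising \emph{bounded}: work with $SH^*_M(K,\epsilon)$ built from everywhere-positive Hamiltonians cofinal for $\{H:H|_{S^1\times K}<\epsilon\}$ for a fixed $\epsilon>0$. The extrinsic continuation map $c_{0,\epsilon}:SH^*_M(K)\to SH^*_M(K,\epsilon)$ is then literally $T^\epsilon$ times an isomorphism of $\Lambda_{\geq 0}$-modules (Lemma~\ref{lem:c_iso}), hence invertible over $\Lambda$, and one can set $1_K:=c_{0,\epsilon}^{-1}(1_{K,\epsilon})$.

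A second point you do not address, which is real but less severe: once the Hamiltonians are positive, the pair-of-pants output must carry weight at least the sum of the input weights, so the product naturally lands in $SH^*_M(K,2\epsilon)$, not back in $SH^*_M(K,\epsilon)$. Unitality therefore reads $x*_\epsilon 1_{K,\epsilon}=c_{\epsilon,2\epsilon}(x)$ rather than $x$ itself, and one has to carry the weight bookkeeping $(\bw,\bH)$ through all gluing arguments; that is the role of the spaces $\cS_{k;1}(\bw,\bH)$ and Table~\ref{tab:weights}, and of the ``realization'' formalism of Section~\ref{subsec:algebra_units}, which yields the first bullet via Corollary~\ref{cor:real_iso} without needing associativity. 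If you replace your growing shift with a fixed one and incorporate this weight bookkeeping, your argument becomes essentially the paper's proof.
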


Although this proposition does not mention the existence of the product structure, we do actually prove its existence and unitality in Section~\ref{sec:prod_unit}.

\begin{proof}[Proof of Theorem~\ref{thm:visib_properties}, Item~(4)]
	We use Proposition~\ref{prop:unit_sh}.
	Suppose $K'\subset K$,
	then the restriction map $SH^*_M(K,\Lambda)\to SH^*_M(K',\Lambda)$ sends $1_K$ to $1_{K'}$. Suppose that $SH^*_M(K,\Lambda)=0$, then $1_{K}=0$, so $1_{K'}=0$. Consequently, $SH^*_M(K',\Lambda)=0$.
\end{proof}	

\subsection{Relative Lagrangian Floer theory}\label{sslag}
Let  $L\subset M$ be an oriented Lagrangian with a relative Pin structure. Assume that there is a compatible (time-independent) almost complex structure $J_L$ such that $L$ does not bound any $J_L$-holomorphic discs. We want to define the relative Lagrangian Floer homology of $(L,J_L)$ for any compact subset $K\in M$: $$HF(L,J_L,K).$$ This will be a $\mathbb{Z}/2\mathbb{Z}$-graded $\Lambda$-vector space. The construction is very similar to the definition of relative symplectic cohomology, so we will be brief. We are using the results of \cite{Sei15} and \cite{mclean} here.

For a Hamiltonian $H:[0,1]\times M\to \mathbb{R}$ such that $\phi_H^1(L)$ is transverse to $L$, and a generic $[0,1]$-dependent almost complex structure $J:=\{J_t\}_{t\in [0,1]}$ with $J_t(x)=J_L(x)$ for every $x\in L$ and $t\in [0,1]$, we obtain $$CF^*(L,,J_L,H,J,\Lambda_{\geq 0}):=\left(\bigoplus_{\text{H-chords}}\Lambda_{\geq 0},d_{Fl}\right),$$ which is a chain complex over $\Lambda_{\geq 0}$ generated by the $1$-chords of $H$, and the differential counts Floer solutions $u: \mathbb{R}\times [0,1]\to M$ with boundary mapping to $L$ with weights \begin{equation}
\label{eq:E_top_lag}
E_{top}(u)=\int_{[0,1]}\gamma_{out}^* H_{out}\,dt-\int_{[0,1]}\gamma_{in}^* H_{in}\,dt+\omega(u).
\end{equation}

For a monotone homotopy $H:[0,1]\times [0,1]\times M\to \mathbb{R}$ with $H|_0=H_0$ and $H|_1=H_1$, and a generic $[0,1]_t\times [0,1]_s$-dependent almost complex structure $J:\{J_{s,t}\}_{s,t\in [0,1]}$ with $J_{s,t}(x)=J_L(x)$ for every $x\in L$ and $s,t\in [0,1]$ and $J_{0,t}$ and $J_{1,t}$ generic as in the previous paragraph, we obtain a chain map 
$$CF^*(L,J_L,H_0,J_{0,t},\Lambda_{\geq 0})\to CF^*(L,J_L,H_1,J_{1,t},\Lambda_{\geq 0}).$$ A generic homotopy rel endpoints of such data gives rise to a chain homotopy as usual.

We can then define the $1$-ray: 
$$\mathcal{C}:=CF^*(L,H_1,J,\Lambda_{\geq 0})\to CF^*(L,H_2,J,\Lambda_{\geq 0})\to \ldots\to ,$$ and define $$CF^*(L,H_s,J):=\widehat{tel}(\mathcal{C}).$$

The same arguments with the closed string case as in Section \ref{ssremindersh} show that $H(CF^*(L,H_s,J))$ is invariant under the choices of $H_s$ and $J$. Hence, we defined our $HF^*(L,J_L,K).$ As before, we want to throw away the torsion part of this, and define $$HF^*(L,J_L,K,\Lambda):=HF^*(L,J_L,K)\otimes\Lambda.$$

We will also drop the $J_L$ from the notation from now on, and declare that $L$ being a tautologically unobstructed Lagrangian means that it has a specified $J_L$ implicit in writing $L$.

\begin{prop}
	\label{prop:lag_analog_thesis}\begin{enumerate}
\item	There are canonical restriction maps for $K\subset K'$:
$$HF^*(L,K')\to HF^*(L,K).
$$

\item $HF^*(L,M,\Lambda)$ is isomorphic to Lagrangian Floer homology of $L$, which is isomorphic to $H^*(L,\Lambda)$ as a $\Lambda$-vector space.
	
\item	 If $L$ lies in the complement of $K$ in $M$, then $HF^*(L,K)=0$.
	
\item Let $K_1$ and $K_2$ be compact domains with equal boundaries such that $K_1\cup K_2=M$, then we have a Mayer-Vietoris sequence:
$$
 \begin{tikzcd}
HF^*(L)\ar[r]&HF^*(L,K_1)\oplus HF^*(L,K_2)\ar[dl]\\HF^*(L,K_1\cap K_2)\ar[u]
\end{tikzcd}
$$ 

\end{enumerate}
\end{prop}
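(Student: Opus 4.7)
The overall plan is to port the blueprint from \cite{Va18} for closed-string relative symplectic cohomology to the Lagrangian intersection setting. The structural features of the $1$-ray construction (acceleration data, telescopes, topological energy weights, completion) are formally identical, so the arguments will be adapted rather than reinvented; the new input is just that the PDEs are now Floer strips with boundary on $L$, and that transversality/compactness rely on the tautological unobstructedness hypothesis and the technical setup from \cite{Sei15,mclean}.

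For (1), given $K\subset K'$, pick cofinal acceleration data $H_s$ for $K$ and $H_s'$ for $K'$; since $H|_{S^1\times K'}<0$ implies $H|_{S^1\times K}<0$, any cofinal sequence for $K'$ is dominated term-by-term by one for $K$ up to reindexing. Monotone continuation maps in $s$ between the two $1$-rays, together with the chain homotopy invariance statement established in Section~\ref{ssremindersh} (Lagrangian version), yield a canonical map on $H^*(\widehat{tel})$, and tensoring with $\Lambda$ gives the restriction map. For (2), when $K=M$ one may take the $H_i$ to be a $C^2$-small Morse perturbation of $0$ on all of $M$, negative everywhere, with the sequence simply shrinking toward $0$. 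Continuation maps between such choices are quasi-isomorphisms already at the $\Lambda_{\ge 0}$-level and the telescope is quasi-isomorphic to a single term $CF^*(L,H_1,J,\Lambda_{\ge 0})$; after $\otimes\Lambda$ and standard PSS-type identification this yields $HF^*(L,\Lambda)\cong H^*(L,\Lambda)$.

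For (3), suppose $L\subset M\setminus K$. Choose an acceleration datum for $K$ in which the Hamiltonians take arbitrarily large positive values on $L$ (allowed since $L$ is disjoint from $K$, so $H_i|_L$ is unconstrained from above). Because every Hamiltonian chord of $H_i$ ending on $L$ contributes an action term $-\int\gamma^*H_i\,dt$ to $E_{top}$ in formula (\ref{eq:E_top_lag}), the generators of $CF^*(L,H_i,J,\Lambda_{\ge 0})$ can be rescaled by Novikov weights so that each continuation map $CF^*(L,H_i)\to CF^*(L,H_{i+1})$ is, after such rescaling, an isomorphism modulo an arbitrarily high power of $T$. The standard telescope shift argument from \cite[Sec.~3]{Va18}, identical in form to the closed-string proof that stable displaceability implies $SH$-invisibility, then forces $\widehat{tel}(\mathcal C)\otimes\Lambda$ to be acyclic.

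The main work is (4), and it will mirror \cite[Sec.~4.3]{Va18}. The plan is to choose, simultaneously and cofinally, four families of Hamiltonians: $H^1_s$ for $K_1$, $H^2_s$ for $K_2$, $H^\cap_s$ for $K_1\cap K_2$, and $H^\cup_s=H^M_s$ for $K_1\cup K_2=M$, tied together by pointwise inequalities $H^\cup\le H^i\le H^\cap$ and by monotone homotopies interpolating them in a standard two-parameter family. This produces a short exact sequence of completed telescopes of Lagrangian Floer complexes, provided one checks that the relevant $\Lambda_{\ge 0}$-linear maps at the chain level form a termwise short exact sequence after an appropriate filtration, and that degreewise completion preserves exactness --- this is exactly the step that the algebra of Section~\ref{sec:alg} and the acyclicity-of-mapping-cone argument of \cite{Va18} were designed for, and is where I expect the only real technical obstacle, since one must ensure transversality for all four families simultaneously and control boundary bubbling using the tautological unobstructedness hypothesis on $L$. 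The resulting long exact sequence, combined with part (2) applied to $HF^*(L)=HF^*(L,M,\Lambda)$, gives the Mayer--Vietoris triangle as stated.
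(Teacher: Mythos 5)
Your treatment of (1), (2), and (4) matches the paper's approach: the paper declares (1) and (2) straightforward, and for (4) simply observes that the proof from \cite{Va18} carries over verbatim once $1$-periodic orbits are replaced by $1$-chords on $L$ in Lemma 4.1.1 of that reference, which is essentially the scheme you describe with four simultaneously chosen cofinal families.

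The problem is your argument for (3). You claim that because the input action term $-\int\gamma_{in}^*H_i\,dt$ in formula (\ref{eq:E_top_lag}) is large and negative, a rescaling of generators by Novikov weights turns each continuation map into $T^{n_i}$ times an isomorphism, with $n_i$ arbitrarily large. This does not follow. The topological energy of a continuation trajectory is
\begin{equation*}
E_{top}(u)=\int_{[0,1]}\gamma_{out}^*H_{i+1}\,dt-\int_{[0,1]}\gamma_{in}^*H_i\,dt+\omega(u),
\end{equation*}
and if the chords of $H_i$ lie outside $K$ with $H_i\approx i$ there, the first two terms contribute roughly $(i+1)-i=1$, independent of $i$. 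The term $\omega(u)$ is not bounded below except by the trivial constraint $E_{top}(u)\geq 0$ coming from monotonicity; it can nearly cancel the action difference. So the raw action estimate gives you no uniform positive lower bound, and the proposed rescaling (which is just a change of basis) cannot create one. The genuine input the paper uses at this point is the \emph{adiabatic} argument of \cite{VaThesis}: one slows down the acceleration data so that the continuation Hamiltonian changes over a long $s$-interval, and a geometric energy estimate then furnishes a \emph{uniform} lower bound $E_{top}(u)\geq c>0$ for all continuation solutions. It is this uniform bound, propagated through compositions of continuation maps, that kills all generators in the degreewise completion. Without the adiabatic step, your argument for (3) stalls at $E_{top}\geq 0$, which is not enough to conclude acyclicity of $\widehat{tel}(\mathcal C)\otimes\Lambda$.
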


\begin{proof}
The first two statements are straightforward. For the third statement, note that we can choose an acceleration data for $K$, so that all the $1$-chords of $L$ lie outside $K$, and moreover the value of the Hamiltonian $H_n$ at the chords of $H_n$ is approximately $n$. Now by the ``adiabatic'' argument in  \cite{VaThesis}, we obtain a uniform lower bound on the topological energies of all possible continuation maps for slowed down acceleration data, which proves that none of the generators survive in the completion. For the last one, note that the proof from \cite{Va18} applies verbatim here as one simply replace the $1$-periodic orbits in Lemma 4.1.1 from \cite{Va18}  with $1$-chords on $L$, and the acceleration data that is constructed there would also satisfy this modified requirement. 
\end{proof}

\begin{prop}
	\label{prop:unit_hf}
There is a distinguished  element $1_{K,L}\in HF^*(L,K,\Lambda)$, called the unit, with the following properties.

\begin{itemize}
	\item $HF^*(L,K,\Lambda)=0$ if and only if $1_{K,L}=0$. 
	
	\item 
	Restriction maps send units to units.
\end{itemize}
	
\end{prop}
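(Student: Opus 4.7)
The plan is to mirror, in the open-string setting, the construction of the unit $1_K \in SH_M(K,\Lambda)$ of Proposition \ref{prop:unit_sh}. The essential technical device will be an open-string analog of raised symplectic cohomology from Section \ref{sec:prod_unit}: I would enlarge the class of Lagrangian acceleration data to admit Hamiltonians that are strictly positive on suitable subsets (so that time-one chords carry positive action) while retaining cofinality against $K$. This yields a variant $\widetilde{HF}^*(L,K,\Lambda)$, which a cofinality-and-continuation-map argument parallel to the closed-string case identifies with $HF^*(L,K,\Lambda)$, functorially under restrictions.

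On this raised model I would construct a chain-level PSS-type map from a small Morse cochain complex $C^*_{\mathrm{Morse}}(L,f)\otimes\Lambda_{\geq 0}$ into $\widehat{tel}(\mathcal{C}(H_s,J))$, built from half-discs with one interior puncture asymptotic to a Hamiltonian chord, a Morse incidence condition on the boundary, together with cascade data interpolating between consecutive Floer levels of the telescope. The unit is then defined as $1_{K,L}:=\mathrm{PSS}(1_L)\in HF^*(L,K,\Lambda)$, where $1_L$ is the Morse-cohomological unit of $L$. To obtain the first bullet I would also set up the Lagrangian triangle product on $HF^*(L,K,\Lambda)$, again in the raised framework, and verify that $1_{K,L}$ is a two-sided unit by the standard cobordism argument degenerating one product input into a PSS cap. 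Once unitality is in hand, if $1_{K,L}=0$ then $x=1_{K,L}\cdot x=0$ for every $x\in HF^*(L,K,\Lambda)$, forcing the entire group to vanish; the converse is trivial.

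For the second bullet, given $K\subset K'$, I would choose acceleration data $H_s\leq H'_s$ for $K$ and $K'$ linked by a monotone homotopy, so that the restriction map is induced by the continuation map $\widehat{tel}(\mathcal{C}(H'_s,J'))\to\widehat{tel}(\mathcal{C}(H_s,J))$. A parametrized PSS moduli space with a common Morse input for $L$, and with the monotone homotopy inserted in the interior asymptotic data, then yields a chain-homotopy-commutative square between the two PSS maps and this continuation map. Passing to cohomology sends $1_{K',L}\mapsto 1_{K,L}$.

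The main obstacle will be setting up the raised Lagrangian Floer theory itself: one must verify that the topological-versus-geometric energy bookkeeping matches that of \cite{Va18} and Section \ref{sec:prod_unit} closely enough that the degreewise telescope completion commutes with the PSS and triangle product operations, so that the geometric PSS cap contributes the expected class $1_{K,L}$ in $H^*(\widehat{tel})$ rather than only in the uncompleted telescope. Transversality, by contrast, is handled as in the construction of $HF^*(L,K,\Lambda)$ itself by perturbing the almost complex structure away from $L$ following Appendix~B of \cite{mclean}, with tautological unobstructedness ruling out disc and sphere bubbling on all the moduli spaces involved.
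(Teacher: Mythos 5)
Your proposal is correct in spirit and correctly identifies the key technical device — a raised version of Lagrangian Floer cohomology, built from strictly positive Hamiltonians so that topological energies are non-negative and the chain-level operations behave well under completion — but the route you take to the unit is genuinely different from the paper's. You build a full Lagrangian PSS map from a Morse cochain complex $C^*_{\mathrm{Morse}}(L,f)\otimes\Lambda_{\geq 0}$, take $1_{K,L}:=\mathrm{PSS}(1_L)$, separately construct the triangle product, and then prove two-sided unitality by a degeneration/cobordism argument. The paper instead skips the Morse intermediary entirely: in Section~\ref{sec:lag} it defines the open-string perturbation spaces $\cS^{op}_{k;1}(\bw,\bH)$ (the extra requirement $T(\partial C)\subset\ker\alpha$ is what keeps the energy identity topological on surfaces with boundary), and the unit $1_{K,L,\epsilon}$ is defined directly as the map of $1$-rays $U\to\cC'$ obtained by counting Floer caps — disks with a single boundary output puncture and no inputs — with data from $\cS^{op}_u$. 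Unitality and the equivalence ``$1_{K,L}=0\iff HF^*(L,K,\Lambda)=0$'' are then not argued by a hands-on product-degeneration computation but are read off from the abstract realization formalism of Subsection~\ref{subsec:algebra_units} (Definition~\ref{defrealization}, Lemma~\ref{lem:realis_h}, Corollary~\ref{cor:real_iso}): the unit cap realizes the variant continuation map $\tilde c_{\epsilon,2\epsilon}$ via the product map, and since $c_{\epsilon,2\epsilon}$ is $T^\epsilon$ times an isomorphism over $\Lambda$, the conclusion follows. Functoriality of units under restriction is handled by the same formalism via Lemma~\ref{lemstrictunit} rather than by a parametrized PSS moduli space.

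What each route buys: the paper's approach is more economical — it avoids constructing a PSS map, avoids having to show that PSS commutes with completion, and systematizes all the ``$X=0$ iff $1=0$'' and ``restriction preserves units'' statements through one algebraic lemma about realizations of morphisms of $1$-rays. Your route requires more moduli-space infrastructure but would additionally yield the statement $HF^*(L,M,\Lambda)\cong H^*(L,\Lambda)$ as a ring and keep explicit geometric track of where $1_{K,L}$ comes from. One small technical caution with your write-up: the PSS cap for Lagrangian Floer cohomology has its Hamiltonian \emph{chord} asymptotic at a boundary puncture, not an interior one; the interior-puncture model you describe is the closed-string cap. Also note that your main flagged obstacle (the bookkeeping of topological energy under completion) is exactly what the raised theory with the constraint $T(\partial C)\subset\ker\alpha$ is designed to handle, so once you've set that up your PSS and product operations are automatically chain maps of $1$-rays over $\Lambda_{\ge 0}$ and pass to the completed telescope without further argument.
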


We need one more piece of information, which uses the existence relative closed-open string maps with good properties.

\begin{prop}
	\label{prop:zero_unit_hf_sh}
For any $K,L$ as above, if $1_K=0\in SH^*_M(K,\Lambda)$, then $1_{K,L}=0\in HF^*(L,K,\Lambda)$.
\end{prop}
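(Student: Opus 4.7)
The plan is to deduce the proposition as a direct consequence of the existence of a closed–open map
$$\cC\cO\co SH^*_M(K,\Lambda)\to HF^*(L,K,\Lambda)$$
that is $\Lambda$-linear and sends the unit to the unit. Granting this, the proof is one line: if $1_K=0$, then $1_{K,L}=\cC\cO(1_K)=\cC\cO(0)=0$. So the real content is constructing $\cC\cO$ at the chain level on a common acceleration datum and verifying that the chain-level closed string unit is sent to the chain-level open string unit.

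The construction of $\cC\cO$ will follow the standard closed–open template, adapted to the relative setting. First, I would choose an acceleration datum $(H_s,J)$ for $K$ which simultaneously works for the construction of $SH_M(K)$ and for the construction of $HF(L,K)$ (matching the almost complex structure to $J_L$ near $L$, and making each $H_i$ non-degenerate both as a closed-string and open-string generator). For each level $i$, I would define a chain map
$$\cC\cO_i\co CF^*(H_i)\to CF^*(L,H_i,J)$$
by counting maps from a disk with one interior negative puncture (asymptotic to a closed orbit of $H_i$) and one boundary positive puncture (asymptotic to a chord of $H_i$ on $L$), with boundary on $L$, using a suitable Hamiltonian perturbation interpolating between the cylindrical and strip ends. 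Because these moduli spaces live inside $M$ and $J$ agrees with $J_L$ on $L$, the hypothesis that $L$ bounds no non-constant $J_L$-holomorphic disks forbids disk bubbling, so the count is well-defined. Topological energy is additive under gluing with continuation cylinders, so the $\cC\cO_i$ intertwine the Floer continuation maps up to chain homotopy, hence descend to a map of $1$-rays. Passing to the telescope and completing (the energy estimate \eqref{eq:E_top} gives the needed $T^{E_{top}(u)}$-weights, which are non-negative when things are set up correctly), one obtains $\cC\cO$ on cohomology after tensoring with $\Lambda$.

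The unit-preserving property is the key technical step, and I expect this to be the main obstacle. Since the unit is constructed in Section~\ref{sec:prod_unit} via raised symplectic cohomology with strictly positive Hamiltonians, one has to implement the closed–open map in that enlarged setup and show compatibility with the analogous raised construction in the Lagrangian case from Section~\ref{sec:lag}. Concretely, both units arise from a PSS-type element corresponding to a point/constant map with appropriate Hamiltonian perturbation at the positive end; by a standard neck-stretching / cobordism argument applied to a one-parameter family of domains that degenerates a disk with one interior puncture into the configuration defining the closed-string unit glued with a half-disk defining the open-string unit, one shows that $\cC\cO_i$ sends the chain-level raised unit on the closed side to the chain-level raised unit on the open side, modulo a Floer boundary. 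This part has to be compatible with the completion/limits defining the units in $SH_M(K,\Lambda)$ and $HF(L,K,\Lambda)$, so one needs uniform energy estimates on the parametric moduli spaces; the tautological unobstructedness of $L$ and the index-bounded setup prevent the estimates from blowing up. Once unit-preservation is established at cohomology, the proposition follows immediately by $\Lambda$-linearity of $\cC\cO$.
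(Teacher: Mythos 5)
Your approach matches the paper's: both construct a $\Lambda$-linear closed-open map $\cC\cO\co SH^*_M(K,\Lambda)\to HF^*(L,K,\Lambda)$ in the raised-cohomology framework, prove it sends $1_K$ to $1_{K,L}$ (the paper's Lemma~\ref{lem:co_unit}), and conclude immediately. The paper's unit-preservation argument is exactly the gluing you describe in spirit, made precise via the gluing maps $\#_\rho\co \cS_{u}\sqcup \cS_{co}\to \cS^{op}_{u}$ on perturbation-data spaces together with the algebraic ``realization'' machinery (Lemma~\ref{lemstrictunit}) that ensures compatibility with the completed telescope.

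One genuine misstatement worth correcting: the appeal to ``the index-bounded setup'' for the uniform energy estimates is out of place. Index boundedness is only used in Section~\ref{secfuk} for the contact Fukaya trick and plays no role here. The non-negativity and uniformity of the topological energies on the parametric moduli spaces come from the defining inequalities of the perturbation spaces $\cS_{co}$ and $\cS^{op}_{u}$ (in particular $d_{C\times M}(H\alpha)\vert_{C\times\{x\}}\ge 0$ and $T(\partial C)\subset\ker\alpha$), which is precisely why the raised construction with everywhere-positive Hamiltonians is introduced. Also, the degeneration you invoke is of the domain for the \emph{open} unit (a disk with one boundary output puncture), breaking into a sphere with one output puncture (the closed unit domain) glued to the closed-open domain (disk with one interior input, one boundary output), rather than ``a disk with one interior puncture'' degenerating; that is a small slip in the domain bookkeeping, not in the overall idea.
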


The proofs of Propositions ~\ref{prop:unit_hf} and~\ref{prop:zero_unit_hf_sh} are given in Section~\ref{sec:lag}.

\begin{proof}[Proof of Theorem~\ref{thm:visib_properties}, Item~(6)]
Suppose $L\subset M$ be such a Lagrangian submanifold, in particular $L\subset int(K)$.

By definition, we have that $SH^*_M(K,\Lambda)=0$. First, we claim that $HF^*(L,K,\Lambda)=0$. This follows by the unitality trick: we have that $1_K=0$ by Proposition~\ref{prop:unit_sh}, hence $1_{K,L}=0$ by Proposition~\ref{prop:zero_unit_hf_sh}, so $HF^*(L,K,\Lambda)=0$ by Proposition~\ref{prop:unit_hf}.

Let $N$ be the compact domain that is the closure of $M\setminus K$. By Proposition~\ref{prop:lag_analog_thesis}, $HF^*(L,N,\Lambda)=0$.

Note that since units are preserved under restriction maps, we also have that $HF^*(L,K\cap N,\Lambda)=0$.
Now the Lagrangian Mayer-Vietoris sequence from Proposition~\ref{prop:lag_analog_thesis} implies that $HF^*(L)=0$, which is a contradiction.
\end{proof}

\section{Chain-level algebra}
\label{sec:alg}
This section sets up some algebraic preliminaries used later. The first subsection reminds the basic notions from \cite{Va18}; the rest contains more technical material which will be necessary in Sections ~\ref{sec:prod_unit} and \ref{sec:lag}.

\subsection{Rays, completion, telescope}
This subsection reminds the algebraic setup from \cite{Va18}; we assume the reader is familiar with this reference.

Let $Ch_{\Lambda_{\geq 0}}$ be the category of $\mathbb{Z}$- or $\mathbb{Z}/2\mathbb{Z}$- graded chain complexes over the Novikov ring $\Lambda_{\geq 0}$.
A 1-ray $\cC$ is the following diagram in $Ch_{\Lambda_{\geq 0}}$, infinite to the right:
$$
\cC=C_1\xrightarrow{c_1} C_2\xrightarrow{c_2} C_3\xrightarrow{c_3} \ldots 
$$
Here each $C_i$ is a chain complex over $\Lambda_{\geq 0}$, and each $c_i$ is a chain map.
Let $1-ray$ be the category of $1$-rays with underlying modules assumed to be free. Morphisms in this category are given by maps of $1$-rays, and composition of morphisms is defined by composing the squares in the finite direction. 

 In $1-ray$ we have a notion of two morphisms being {\it equivalent}, defined by the existence of a homotopy of maps of $1$-rays. 



The telescope construction provides a functor $$tel: 1-ray\to Ch_{\Lambda_{\geq 0}}.$$ We also have the degree-wise completion functor $$\widehat{\cdot}:Ch_{\Lambda_{\geq 0}}\to Ch_{\Lambda_{\geq 0}}.$$ Composing the telescope and the completion functor, we obtain the completed telescope functor $$\widehat{tel}: 1-ray\to Ch_{\Lambda_{\geq 0}}.$$

Note that $tel$ sends equivalent morphisms to homotopy equivalent chain maps. Moreover, weak equivalences are sent to quasi-isomorphisms. The same statements hold true after completion as well, noting that quasi-isomorphisms stay quasi-isomorphisms after completion whenever the underlying modules are free.

\subsection{Filtered direct limits and strictification}
Another way to express the data of a $1$-ray is the following. Let $N$ be the category with objects positive integers, and precisely $1$ morphism from $n$ to $m$, whenever $n\leq m$, and no other morphisms. Then a functor $N\to Ch_{\Lambda_{\geq 0}}$ is precisely the same data as a $1$-ray. 

Let us denote the category of $\mathbb{Z}$- or $\mathbb{Z}/2\mathbb{Z}$- graded modules over $\Lambda_{\geq 0}$ by $Mod_{\Lambda_{\geq 0}}$. Yet another way to describe a $1$-ray is as a  functor $N\to Mod_{\Lambda_{\geq 0}}$ (forgetting the differentials), and a natural transformation of this functor to its composition with the shift functor (given by the differentials).

With this in mind, given a $1$-ray $\mathcal{C}=C_1\to C_2\to C_3\to\ldots$, we can also define a chain complex $$\varinjlim{(\mathcal{C})},$$ which is (for definiteness) obtained by applying the standard construction of filtered direct limits of modules to the corresponding functor $N\to Mod_{\Lambda_{\geq 0}}$ and to the natural transformation given by the differentials.

Let us call a map of $1$-rays $\mathcal{C}\to \mathcal{C}'$ \textbf{strict} if for all $i\geq 1$, $C_i\to C'_{i+1}[1]$ (i.e. the homotopies) are identically zero. Let us denote the corresponding subcategory of $1-ray$ by $st-1-ray$. Note that $\varinjlim$ defines a functor $$\varinjlim: st-1-ray\to Ch_{\Lambda_{\geq 0}}.$$

We define the $1$-ray $$F(\mathcal{C})=F^1(tel(\mathcal{C}))\to F^2(tel(\mathcal{C}))\to F^3(tel(\mathcal{C}))\to\ldots,$$ where $$F^n(tel(\mathcal{C}))=\left(\bigoplus_{i\in [1,n-1]}C_i[1]\oplus C_i\right)\oplus C_n$$ with the differential depicted below (for $n=3$, the general form is clear) \begin{align}\label{teles}
\xymatrix{
C_1\ar@{>}@(ul,ur)^{d }  &C_2\ar@{>}@(ul,ur)^{d} &C_3\ar@{>}@(ul,ur)^{d}\\
C_1[1]\ar@{>}@(dl,dr)_{-d} \ar[u]^{\text{id}}\ar[ur]^{f_1} &C_2[1]\ar@{>}@(dl,dr)_{-d} \ar[u]^{\text{id}}\ar[ur]^{f_2}&}
\end{align} and the maps are the canonical inclusion maps. Note that $\varinjlim{(F(\mathcal{C}))}$ is canonically isomorphic to $tel(\mathcal{C})$. 

Then by the discussion of functioriality of cones and telescopes as in \cite{Va18}, it is clear that we can extend $F$ to a functor $$F:1-ray\to st-1-ray.$$ We call this the \textbf{strictification} functor.

Let us also note that there exists canonical commutative diagrams
\begin{align}\label{c2estrictification}
\xymatrix{
F^n(tel(\mathcal{C})) \ar[d]\ar[r]  &F^{n+1}(tel(\mathcal{C}))\ar[d] \\ C_n \ar[r] &C_{n+1}},
\end{align} where the vertical arrows are quasi-isomorphisms given by the direct sum of maps $C_i\to C_n$ multiplied by $(-1)^{i}$, $i\in [1,n]$ and the zero maps $C_i[1]\to C_n$, $i\in [1,n-1]$.

These diagrams give a morphism\footnote{This is a cofibrant replacement in the Reedy model structure of $st-1-ray$ obtained using the projective model structure on $Ch_{\Lambda_{\geq 0}}$.} in $st-1-ray$, which induce quasi-isomorphisms after applying the $\varinjlim$ functor.

\subsection{Tensor product}
\label{subsec:tensor}
The remainder of this section is no longer taken from \cite{Va18}. It will be used to set up the product and units on relative symplectic cohomology\footnote{During the revision, we noticed the existence of the relevant paper \cite{greenlees}. The explicit map in Lemma 0.1 would have simplified (not by much) some of the discussion below (compare with our Lemma \ref{lem:tel_tensor}). It is not just tedious, as the authors point out, but also tricky to check that this map is a chain map. We also take this opportunity to refer the reader to the elementary viewpoint on the telescope construction given in the paragraph below this lemma. This version of the telescope differs from ours by the sign of the diagonal arrows in \eqref{teles}. The two versions are isomorphic using the direct sum of the identity maps $C_i\to C_i$ and $C_i[1]\to C_i[1]$ multiplied  by $(-1)^i$. This explains the sign that we needed to introduce in  \eqref{c2estrictification}.}
  
Let us define the tensor product $\mathcal{C}\otimes \mathcal{C}'$ of two one-rays $\mathcal{C}$ and $\mathcal{C}'$ as their slice-wise tensor product.That is, if
$
\cC=C_1\to C_2\to\ldots
$
and 
$\cC'=C_1'\to C_2'\to\ldots$, then

$$\cC\otimes\cC'=C_1\otimes C_1'\to C_2\otimes C_2'\to\ldots $$
with the obvious structure maps.  Note that the differential of the tensor product of two chain complexes involves the Koszul sign as usual. We shall use the following\footnote{Looking at the proof of Lemma 0.1 from \cite{greenlees} the reader might question why we had to introduce $\mathcal{E}$ instead of using $\varinjlim{(\mathcal{C})}\otimes \varinjlim{(\mathcal{C}')}$. The reason is that we want the right pointing arrows in the zig-zag to stay quasi-isomorphisms after completion and this was more apparent to us with $\mathcal{E}$. Of course, if we just used Lemma 0.1, we would not have to think about this technicality, as the quasi-isomorphism there goes between free modules.}

\begin{lem}
	\label{lem:tel_tensor}
Let $\mathcal{E}:=\varinjlim{(F^1(tel(\mathcal{C}))\otimes F^1(tel(\mathcal{C}')\to\ldots \to F^n(tel(\mathcal{C}))\otimes F^n(tel(\mathcal{C}'))\to \ldots)}$.
There exists a canonical zig-zag of quasi-isomorphisms $$tel(\mathcal{C}\otimes \mathcal{C}')\to \lim{(\cC\otimes\cC')}\leftarrow \mathcal{E}\to tel(\mathcal{C})\otimes tel(\mathcal{C}').$$
Moreover, the maps $tel(\mathcal{C}\otimes \mathcal{C}')\to \varinjlim{(\cC\otimes\cC')}$ and $\mathcal{E}\to tel(\mathcal{C})\otimes tel(\mathcal{C}')$ stay quasi-isomorphisms after completion.
\end{lem}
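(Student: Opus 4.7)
The plan is to construct the three maps formally from the telescope/strictification/colimit formalism of the previous subsections, and then verify each is a quasi-isomorphism using exactness of filtered colimits in $\mathrm{Mod}_{\Lambda_{\geq 0}}$ together with the fact that tensor product commutes with filtered colimits. The completion statement will follow from the principle, recalled in Section~\ref{sec:alg} above, that quasi-isomorphisms between chain complexes with free underlying $\Lambda_{\geq 0}$-modules are preserved by degreewise completion.

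First I would treat the map $\mathcal{E}\to tel(\mathcal{C})\otimes tel(\mathcal{C}')$. Since the structure maps in $F(\mathcal{C})$ are inclusions of direct summands, $\varinjlim_n F^n(tel(\mathcal{C}))=tel(\mathcal{C})$ on the nose, and similarly for $\mathcal{C}'$. Tensor product of $\Lambda_{\geq 0}$-modules commutes with filtered colimits (by cofinality of the diagonal $\mathbb{N}\hookrightarrow \mathbb{N}\times \mathbb{N}$ in the filtered poset), so this map is in fact an isomorphism of chain complexes, and therefore remains an isomorphism after completion.

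Next, the maps $\mathcal{E}\to \varinjlim(\mathcal{C}\otimes\mathcal{C}')$ and $tel(\mathcal{C}\otimes\mathcal{C}')\to \varinjlim(\mathcal{C}\otimes\mathcal{C}')$ are both produced by applying $\varinjlim$ to the strictification quasi-isomorphism~\eqref{c2estrictification}: in the first case to the slice-wise tensor product of the strictifications of $\mathcal{C}$ and $\mathcal{C}'$, and in the second to the strictification of the tensor one-ray $\mathcal{C}\otimes\mathcal{C}'$, using the identification $\varinjlim F(\mathcal{C}\otimes\mathcal{C}')=tel(\mathcal{C}\otimes\mathcal{C}')$. Naturality of~\eqref{c2estrictification} makes each of these a morphism of strict $1$-rays. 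Each slice-wise component is a quasi-isomorphism between complexes of free $\Lambda_{\geq 0}$-modules --- for the first map, this uses additionally that tensoring a quasi-isomorphism of free complexes with any free complex is again a quasi-isomorphism --- and exactness of filtered colimits in $\mathrm{Mod}_{\Lambda_{\geq 0}}$ upgrades this to quasi-isomorphisms of the colimits.

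The main point requiring care is the completion assertion for $tel(\mathcal{C}\otimes\mathcal{C}')\to \varinjlim(\mathcal{C}\otimes\mathcal{C}')$. Both sides are degreewise direct sums of the free $\Lambda_{\geq 0}$-modules $C_i\otimes C_j'$, hence have free underlying modules, and the principle recalled at the start of Section~\ref{sec:alg} applies to give that the completed map is still a quasi-isomorphism. The whole zig-zag is engineered so that every right-pointing arrow is either an outright isomorphism or a quasi-isomorphism between complexes with free underlying modules, which is precisely what is needed to sidestep the failure of exactness of completion on general $\Lambda_{\geq 0}$-modules.
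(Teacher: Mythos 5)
Your construction of the zig-zag matches the paper's: both identify the slice-wise quasi-isomorphisms $F^n(tel(\mathcal{C}\otimes\mathcal{C}'))\to C_n\otimes C_n'\leftarrow F^n(tel(\mathcal{C}))\otimes F^n(tel(\mathcal{C}'))$ and pass to colimits, and your observation that $\mathcal{E}\to tel(\mathcal{C})\otimes tel(\mathcal{C}')$ is actually an isomorphism (by cofinality of the diagonal plus the fact that $\otimes$ commutes with filtered colimits) is a clean way to handle that arrow and its completion.

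The gap is in your treatment of the completion assertion for $tel(\mathcal{C}\otimes\mathcal{C}')\to\varinjlim(\mathcal{C}\otimes\mathcal{C}')$. You assert that \emph{both} sides ``are degreewise direct sums of the free $\Lambda_{\geq 0}$-modules $C_i\otimes C_j'$, hence have free underlying modules.'' This is true for the telescope, but not for $\varinjlim(\mathcal{C}\otimes\mathcal{C}')$: that object is a genuine filtered colimit $\bigsqcup_n(C_n\otimes C_n')/\!\sim$ along the (not necessarily injective) structure maps $c_n\otimes c_n'$, not a direct sum, and a filtered colimit of free $\Lambda_{\geq 0}$-modules need not be free --- it is only flat, which is weaker over the non-Noetherian valuation ring $\Lambda_{\geq 0}$. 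Since the ``completion preserves quasi-isomorphisms'' principle from Section~\ref{sec:alg} requires freeness of \emph{both} underlying modules (so that the mapping cone is free and its completion can be shown acyclic), your argument does not apply. The paper itself flags this map as the one that does \emph{not} follow from the free-modules principle, and instead invokes a separate result, Lemma~2.3.7 of~\cite{Va18}, which is specifically a statement about the map $tel(\mathcal{D})\to\varinjlim(\mathcal{D})$ remaining a quasi-isomorphism after completion; the ``free modules'' shortcut is reserved in the paper for the other right-pointing arrow $\mathcal{E}\to tel(\mathcal{C})\otimes tel(\mathcal{C}')$, exactly because there both sides really are free. You would need to either cite that lemma or reprove it; the elementary argument you give does not cover this case.
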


\begin{proof}
Note that we have canonical quasi-isomorphisms 
$$F^n(tel(\mathcal{C}\otimes \mathcal{C}'))\to C_n\otimes C_n'\leftarrow F^n(tel(\mathcal{C}))\otimes F^n(tel(\mathcal{C}')),$$ which are compatible for different $n$'s. Taking the direct limits of these maps we obtain a zig-zag of quasi-isomorphisms:
$$tel(\mathcal{C}\otimes \mathcal{C}')\to \varinjlim{(\cC\otimes\cC')}\leftarrow \mathcal{E}.$$

Clearly, $$F^1(tel(\mathcal{C}))\otimes F^1(tel(\mathcal{C}')\to\ldots \to F^n(tel(\mathcal{C}))\otimes F^n(tel(\mathcal{C}')\to \ldots$$ is cofinal in the diagram 
\begin{align*}
\xymatrix{
F^1(tel(\mathcal{C}))\otimes F^1(tel(\mathcal{C}') \ar[d]\ar[r]  &F^1(tel(\mathcal{C}))\otimes F^2(tel(\mathcal{C}')\ar[d]\ar[r]& F^1(tel(\mathcal{C}))\otimes F^3(tel(\mathcal{C}')\ar[r]\ar[d]& \\ F^2(tel(\mathcal{C}))\otimes F^1(tel(\mathcal{C}')\ar[r]\ar[d] &F^2(tel(\mathcal{C}))\otimes F^2(tel(\mathcal{C}')\ar[r]\ar[d]&& \\ F^3(tel(\mathcal{C}))\otimes F^1(tel(\mathcal{C}')\ar[r]\ar[d]&&\ldots &\\&&&&}
\end{align*}

Moreover, the filtered direct limit of the diagram above is canonically isomorphic to $tel(\mathcal{C})\otimes tel(\mathcal{C}')$. Therefore we obtain a quasi-isomorphism $$\mathcal{E}\to tel(\mathcal{C})\otimes tel(\mathcal{C}'),$$which finishes the proof of the first statement.

That $tel(\mathcal{C}\otimes \mathcal{C}')\to \varinjlim{(\cC\otimes\cC')}$ stays a quasi-isomorphism after completion follows from Lemma 2.3.7 of \cite{Va18}. More straightforwardly, the same is true for $\mathcal{E}\to tel(\mathcal{C})\otimes tel(\mathcal{C}')$ because the involved modules are free.

	\end{proof}

Assume that we are given a morphism $\mathcal{C}\otimes \mathcal{C}'\to \mathcal{D}$ in $1-ray$. We can  turn this into a map \begin{align}\label{homologyproduct}
H^*(\widehat{tel}(\mathcal{C}))\otimes H^*(\widehat{tel}(\mathcal{C}'))\to H^*(\widehat{tel}(\mathcal{D}))
\end{align} by composing the following natural maps. 

\begin{enumerate}
\item $H^*(\widehat{tel}(\mathcal{C}))\otimes H^*(\widehat{tel}(\mathcal{C}'))\to H^*(\widehat{tel}(\mathcal{C})\otimes \widehat{tel}(\mathcal{C}'))$;
\item $H^*(\widehat{tel}(\mathcal{C})\otimes \widehat{tel}(\mathcal{C}'))\to H^*(\widehat{tel(\mathcal{C})\otimes tel(\mathcal{C}')})$;
\item $H^*(\widehat{tel(\mathcal{C})\otimes tel(\mathcal{C}')})\to H^*(\widehat{tel}(\mathcal{C}\otimes \mathcal{C}'))$;
\item $H^*(\widehat{tel}(\mathcal{C}\otimes \mathcal{C}'))\to H^*(\widehat{tel}(\mathcal{D}))$.
\end{enumerate}

The third map comes from Lemma~\ref{lem:tel_tensor}, and the other maps are obvious.

\begin{lem}\label{lemmafunctorial}
Assume that we have a morphism $\mathcal{C}\to\tilde{\mathcal{C}}$ and an object $\mathcal{C}'$ in $1-ray$, then we get a morphism $\mathcal{C}\otimes \mathcal{C}'\to\tilde{\mathcal{C}}\otimes \mathcal{C}'$. In turn, if we have a morphism $\tilde{\mathcal{C}}\otimes \mathcal{C}'\to\mathcal{D}$, by composition we can obtain $\mathcal{C}\otimes \mathcal{C}'\to\mathcal{D}$.

Further assume that $\mathcal{C}\to\tilde{\mathcal{C}}$ is strict\footnote{This is the main place where Lemma 0.1 of \cite{greenlees} would have helped us by removing this restriction. Note that the map in Lemma 0.1 is also not strictly (as opposed to up to homotopy) natural for maps of $1$-rays in both $X$ and $Y$ slots in their notation (the asymmetry of the formula gives a hint). To make this statement we use the functoriality of the telescope construction and also the map from the first paragraph of Lemma \ref{lemmafunctorial}. The map of Lemma 0.1  is only natural in $Y$, as another tricky computation shows, but this would be enough for our purposes.}. Then, the diagram below commutes:
$$
\begin{tikzcd}
H^*(\widehat{tel}(\mathcal{C}))\otimes H^*(\widehat{tel}(\mathcal{C}'))
\ar[d]
\ar[r]
&
H^*(\widehat{tel}(\mathcal{D}))
\\
H^*(\widehat{tel}(\tilde{\mathcal{C})})\otimes H^*(\widehat{tel}(\mathcal{C}'))
\ar[ur]
\end{tikzcd}
$$
\end{lem}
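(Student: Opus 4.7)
My plan is to expand the diagram into a rectangular diagram that unpacks the definition of the product map \eqref{homologyproduct} into its four constituent steps (1)--(4), and then to verify that each individual square commutes — either on the nose (for strict maps) or after passing to homology. Since the map $\mathcal{C}\otimes \mathcal{C}'\to \mathcal{D}$ is defined as the composition of $\mathcal{C}\otimes \mathcal{C}'\to \tilde{\mathcal{C}}\otimes \mathcal{C}'$ (induced by tensoring with $\mathcal{C}'$) followed by $\tilde{\mathcal{C}}\otimes \mathcal{C}'\to \mathcal{D}$, proving the lemma reduces to checking that each of the steps (1)--(4) is natural with respect to the $\mathcal{C}$-slot morphism $\mathcal{C}\to \tilde{\mathcal{C}}$, provided this morphism is strict.

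Steps (1), (2), (4) pose no difficulty. Step (1) is the K\"unneth map $H^*(A)\otimes H^*(B)\to H^*(A\otimes B)$ for completed chain complexes, which is natural in both slots. Step (2) is induced by the canonical map $\widehat{A}\otimes \widehat{B}\to \widehat{A\otimes B}$ on completions, also natural in both slots. Step (4) is just functoriality of $H^*(\widehat{tel}(-))$ applied to the structural morphism of $1$-rays going to $\mathcal{D}$; here I use that strict maps of $1$-rays compose strictly, so the telescope functor $tel$ commutes on the nose with the composition $\mathcal{C}\otimes \mathcal{C}'\to \tilde{\mathcal{C}}\otimes \mathcal{C}'\to \mathcal{D}$, and completion preserves this.

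The key step is step (3), the zig-zag of Lemma~\ref{lem:tel_tensor}, and this is where strictness of $\mathcal{C}\to \tilde{\mathcal{C}}$ is used. The zig-zag is built from the strictification functor $F:1\text{-}ray\to st\text{-}1\text{-}ray$ applied to $\mathcal{C}$ and $\mathcal{C}'$, combined with the chain-level $\varinjlim$ functor on strict $1$-rays. Because $F$ is a functor and a strict map is sent to a strict map, a strict morphism $\mathcal{C}\to \tilde{\mathcal{C}}$ induces a strictly commuting diagram
\[
\begin{tikzcd}[column sep=small]
tel(\mathcal{C}\otimes \mathcal{C}')\ar[r]\ar[d] & \varinjlim(\mathcal{C}\otimes \mathcal{C}')\ar[d] & \mathcal{E}\ar[l]\ar[r]\ar[d] & tel(\mathcal{C})\otimes tel(\mathcal{C}')\ar[d]\\
tel(\tilde{\mathcal{C}}\otimes \mathcal{C}')\ar[r] & \varinjlim(\tilde{\mathcal{C}}\otimes \mathcal{C}') & \tilde{\mathcal{E}}\ar[l]\ar[r] & tel(\tilde{\mathcal{C}})\otimes tel(\mathcal{C}')
\end{tikzcd}
\]
of $\Lambda_{\geq 0}$-chain complexes. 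Applying degreewise completion preserves the commutativity and, by Lemma~\ref{lem:tel_tensor}, keeps the rightward arrows quasi-isomorphisms, so passing to cohomology and inverting the zig-zag yields the naturality square for step (3).

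The main obstacle is precisely this need for strictness in step (3): a non-strict map of $1$-rays would only induce a map of strictifications up to homotopy, and tensoring such a map with $\mathcal{C}'$ in a way compatible with all the intermediate objects $\varinjlim(\cdot)$ and $\mathcal{E}$ requires a coherent system of higher homotopies which we have not developed. Restricting to strict maps bypasses this problem cleanly; modulo this point, the rest of the argument is routine bookkeeping of functoriality.
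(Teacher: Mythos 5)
Your overall strategy for the second half of the lemma is exactly the paper's: unpack the map \eqref{homologyproduct} into the four constituent maps (1)--(4), observe that the triangle at (4) commutes tautologically since the top map is defined as the composition through $\tilde{\mathcal{C}}\otimes\mathcal{C}'$, and then reduce to naturality of (1)--(3) in the first slot, with strictness entering only in (3). That is the right decomposition, and your identification of (3) as the sensitive step is correct.

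Two points to tighten. First, you take the first assertion of the lemma — that a morphism $\mathcal{C}\to\tilde{\mathcal{C}}$ in $1\text{-}ray$ induces a morphism $\mathcal{C}\otimes\mathcal{C}'\to\tilde{\mathcal{C}}\otimes\mathcal{C}'$ — entirely for granted in the phrase ``(induced by tensoring with $\mathcal{C}'$).'' This is not automatic: a morphism of $1$-rays carries homotopy data $h_i\co C_i\to\tilde{C}_{i+1}[1]$ in addition to the slice maps $g_i$, and one must check that $(g_i\otimes\id,\,h_i\otimes f_i')$ really satisfies the $1$-cube equations for $\mathcal{C}\otimes\mathcal{C}'\to\tilde{\mathcal{C}}\otimes\mathcal{C}'$ (note in particular the diagonal term is $h_i\otimes f_i'$, not $h_i\otimes\id$, and there are Koszul signs to track). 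The paper records this explicitly as the $1$-cube
$$
\begin{tikzcd}
C_i\otimes C_i'
\ar[d, " g_i\otimes \id"']
\ar[r, "f_i\otimes f_i'"]
\ar[dr,"h_i\otimes f_i'"]
&
C_{i+1}\otimes C_{i+1}'
\ar[d, " g_{i+1}\otimes \id"]
\\
\tilde{C}_i\otimes C_i'
\ar[r, "\tilde{f}_i\otimes f_i'"']
&
\tilde{C}_{i+1}\otimes C_{i+1}'
\end{tikzcd}
$$
and this verification, though routine, is genuinely part of the statement; it should appear in your proof. Second, in your treatment of step (3) you appeal to ``$F$ is a functor and a strict map is sent to a strict map,'' but functoriality of $F$ alone is not what strictness buys you — $F$ sends \emph{all} morphisms to strict ones. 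What the paper actually uses (and states explicitly) is that the comparison maps $F^n(tel(\mathcal{C}))\to C_n$ of \eqref{c2estrictification}, which are the vertical legs in the zig-zag of Lemma~\ref{lem:tel_tensor}, strictly commute with a morphism of $1$-rays only when that morphism is strict; for a non-strict morphism the homotopy terms enter and one only gets commutation up to homotopy, which is exactly the coherence headache you flag. So the strictness is used at the level of \eqref{c2estrictification}, not merely at the level of $F$ or of $\varinjlim$. Finally, your remark that step (4) needs ``strict maps compose strictly'' is a red herring: $tel$ and $\widehat{tel}$ are functorial on all of $1\text{-}ray$, and the commutativity of the last triangle is definitional, with no appeal to strictness required.
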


\begin{proof}A direct computation shows that the diagram below is a $1$-cube.
$$
\begin{tikzcd}
C_i\otimes C_i'
\ar[d, " g_i\otimes id"']
\ar[r, "f_i\otimes f_i'"]
\ar[dr,"h_i\otimes f_i'"]
&
C_{i+1}\otimes C_{i+1}'
\ar[d, " g_{i+1}\otimes id"]
\\
\tilde{C}_i\otimes C_i'  
\ar[r, "\tilde{f}_i\otimes f_i'"']
&
\tilde{C}_{i+1}\otimes C_{i+1}'
\end{tikzcd}
$$

%
%
%

This finishes the first part. The second part follows from showing the naturality of maps (1)-(3). The strictness assumption helps with (3) as $\varinjlim$ is only functorial for strict morphisms of $1$-rays. Here we also use that the diagrams (\ref{c2estrictification}) are compatible with strict morphisms.
\end{proof}

\subsection{Units}
\label{subsec:algebra_units}

Let $U$ be the $1$-ray 
$$\Lambda_{\geq 0}\to\Lambda_{\geq 0}\to\ldots ,$$ where all complexes $\Lambda_{\ge 0}$ are in degree $0$ and have zero differential, and the morphism maps are all the identity. Note that, as suggested by the notation, we are remembering the unit element of $\Lambda_{\ge 0}$ here. Both $H^*(tel(U))$ and $H^*(\widehat{tel}(U))$ are canonically isomorphic to $\Lambda_{\geq 0}$ in degree $0$, and zero otherwise.

\begin{defin}\label{defrealization}Let 
$$f:\mathcal{C}\to \mathcal{D},\ U_{\mathcal{C}'}:U\to \mathcal{C}',\text{ and }p:\mathcal{C}'\otimes \mathcal{C}\to \mathcal{D}$$ be morphisms in $1-ray$. If the composition 
$$\mathcal{C}= U \otimes \mathcal{C}\xrightarrow{} \mathcal{C}'\otimes \mathcal{C}\xrightarrow{p} \mathcal{D}$$ is equivalent to $f$, we call $U_{\mathcal{C}'}$ a realization of $f$ via $p$.
\end{defin}

Let us analyze the situation more concretely. A map of $1$-rays $U_{\mathcal{C}'}:U\to \mathcal{C}'$ is equivalent to the following data. \begin{itemize}
\item a closed element $u_i\in C_i^{0'}$ for each $i=1,2,\ldots$
\item an element $p_{i+1}\in C_{i+1}^{-1'}$ such that the image of $u_i$ in $C_{i+1}'$ is equal to $u_{i+1}+dp_{i+1}$.
\end{itemize}

Let $u_{\mathcal{C}'}$ be the image of $1$ under the map $\Lambda_{\geq 0}=H^*(\widehat{tel}(U))\to H(\widehat{tel}(\mathcal{C}'))$. The following lemma is easy.

\begin{lem}
Let $x\in C_1'$ be any element cohomologous to $u_1$, then the homology class of its image under the canonical chain map $C_1'\to \widehat{tel}(\mathcal{C}')$ is equal to $u_{\mathcal{C}'}$.
\end{lem}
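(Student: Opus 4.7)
The plan is to trace the class $u_{\mathcal{C}'}$ through the telescope construction, then invoke the fact that cohomologous elements in $C_1'$ remain cohomologous after applying the chain map $C_1' \to \widehat{tel}(\mathcal{C}')$.

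First, I would exhibit a concrete cycle representative for the generator $1 \in \Lambda_{\geq 0} \cong H^0(\widehat{tel}(U))$. Since $U$ has zero internal differentials and identity structure maps, the element $1_1$ placed in the first summand $C_1 = \Lambda_{\geq 0}$ of $\widehat{tel}(U)$ is automatically a cycle. A direct inspection of the telescope differential in \eqref{teles} together with the $T$-adic completion shows that its class corresponds to $1$ under the canonical identification with $\Lambda_{\geq 0}$: an element $\sum a_i\, 1_i$ is a coboundary of $\sum b_i\, \sigma_i$ exactly when $b_j = \sum_{i\le j} a_i$, and for this to define an element of the completion one needs $v(b_j)\to\infty$; so $H^0$ is recovered by the total sum and $[1_1]\mapsto 1$.

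Second, the chain map $\widehat{tel}(U_{\mathcal{C}'})\co \widehat{tel}(U) \to \widehat{tel}(\mathcal{C}')$ induced by $U_{\mathcal{C}'}$ sends, by standard functoriality of the telescope for maps of $1$-rays with homotopies, the $i$-th summand $\Lambda_{\geq 0} \subset \widehat{tel}(U)$ into $C_i'$ by multiplication by $u_i$, and the $i$-th shifted summand $\Lambda_{\geq 0}[1]$ into $C_{i+1}'$ by multiplication by $p_{i+1}$. In particular, $1_1$ is sent to the image of $u_1$ under the canonical inclusion $C_1' \hookrightarrow \widehat{tel}(\mathcal{C}')$. Combined with step one, this identifies $u_{\mathcal{C}'}$ with the cohomology class of this image.

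Third, if $x \in C_1'$ satisfies $x - u_1 = dy$ for some $y \in C_1'$, then since $C_1' \to \widehat{tel}(\mathcal{C}')$ is a chain map, the images of $x$ and $u_1$ differ by $d$ applied to the image of $y$, hence represent the same class, which by the previous step is $u_{\mathcal{C}'}$. The only step requiring genuine verification is the identification of $[1_1]$ with the generator of $H^0(\widehat{tel}(U))$; once that is in hand, everything else follows from naturality of the telescope and the fact that the inclusion of the first slot is a chain map, so I expect no real obstacle.
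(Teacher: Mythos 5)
Your proof is correct. The paper does not supply an argument for this lemma (it simply states that it is "easy"), and what you wrote is the intended argument: identify $[1_1]$ with $1\in H^0(\widehat{tel}(U))$, note that $\widehat{tel}(U_{\mathcal{C}'})$ sends $1_1$ to $u_1\in C_1'\subset\widehat{tel}(\mathcal{C}')$, and conclude by the fact that $C_1'\to\widehat{tel}(\mathcal{C}')$ is a chain map. (The only thing to be slightly careful about is the sign on the diagonal arrows in \eqref{teles}, which turns your formula $b_j=\sum_{i\le j}a_i$ into an alternating sum, but this does not affect the conclusion that $[1_1]$ generates and maps to $1$ under the canonical identification.)
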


Now let us take a map of $1$-rays $U_{\mathcal{C}'}^*:U\to \mathcal{C}'$ equivalent to $U_{\mathcal{C}'}$. The data of $U_{\mathcal{C}'}^*$ is equivalent to elements $u_i^*,p_{i+1}^*$ as above. The homotopy between $U_{\mathcal{C}'}$ and $U_{\mathcal{C}'}^*$ is equivalent to the data of 

\begin{itemize}
\item an element $h_i\in C_i^{-1'}$ such that $u_i-u_i^*=dh_i$ for each $i=1,2,\ldots$
\item an element $q_{i+1}\in C_{i+1}^{-2'}$ such that the image of $h_i$ in $C_{i+1}'$ is equal to $h_{i+1}+dq_{i+1}$.
\end{itemize}

Finally we want to analyze the maps 
$$\mathcal{C}= U\otimes \mathcal{C}\xrightarrow{} \mathcal{C}'\otimes \mathcal{C}$$ for $U_{\mathcal{C}'}$ and $U_{\mathcal{C}'}^*$. More precisely, we want to show that these two maps are equivalent.

$$
\begin{tikzcd}
C_i
\ar[d, " u_i\otimes", bend left]
\ar[d, " u_i^*\otimes"', bend right]
\ar[r, "f_i"'']
&
C_{i+1}
\ar[d, "u_{i+1}\otimes ", bend left]
\ar[d, " u_{i+1}^*\otimes"', bend right]
\\
C_i'\otimes C_i  
\ar[r, "f_i'\otimes f_i"']
&
C_{i+1}'\otimes C_{i+1}
\end{tikzcd}
$$

We define $C_i\to C_i\otimes C_i'[1] $ by $c\mapsto  h_i\otimes c$. Let us check that this indeed is a homotopy: $$ u_i\otimes c- u_i^*\otimes c=dh_i \otimes c = d(h_i\otimes c)+h_i\otimes dc.$$

We then recall that the two maps $C_i\to C_{i+1}\otimes C_{i'+1}[1] $ (corresponding to the two squares on the sides of the $2$-slit) are by definition $c\mapsto p_i\otimes f_i(c) $ and $c\mapsto  p_i^*\otimes f_i(c)$. 


Finally we define $C_i\to C_i\otimes C_i'[2] $ by $c\mapsto q_{i+1}\otimes f_i(c)$. Again a similar computation shows that this completes the diagram to a $3$-slit. Note that there are two sign changes in the computation, which result in the correct equation.


Hence, we proved the following.

\begin{lem}\label{lemstrictunit}
Assume that $U_{\mathcal{C}'}$ is a realization of $f$ via $p$ as above as in Definition \ref{defrealization}. Then any morphism that is equivalent to $U_{\mathcal{C}'}$ is also a realization of $f$ via $p$. Moreover, it gives rise to the same element  $u_{\cC'}\in H(\widehat{tel}(\mathcal{C}'))$.
\end{lem}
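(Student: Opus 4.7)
The plan is to repackage the explicit computation carried out in the paragraphs immediately preceding the lemma into the clean statement, and then separately invoke functoriality of $\widehat{tel}$ for the claim about $u_{\cC'}$. First I would take an auxiliary morphism $U_{\mathcal{C}'}^*:U\to\mathcal{C}'$ equivalent to $U_{\mathcal{C}'}$, unwind the definition of the homotopy between them into the concrete data $(h_i, q_{i+1})$ already described, and aim to produce a homotopy between the two induced morphisms of $1$-rays
\[
U\otimes\mathcal{C}\ \rightrightarrows\ \mathcal{C}'\otimes\mathcal{C}.
\]

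For each $i$, the level-$i$ component of this homotopy will be $c\mapsto h_i\otimes c$ (in degree $-1$) and the diagonal component will be $c\mapsto q_{i+1}\otimes f_i(c)$ (in degree $-2$). The verification that these assemble into a homotopy of maps of $1$-rays amounts to the three bookkeeping identities shown just above the lemma: $u_i\otimes c - u_i^*\otimes c = d(h_i\otimes c) + h_i\otimes dc$, the identification of the two diagonal maps $c\mapsto p_{i+1}\otimes f_i(c)$ and $c\mapsto p_{i+1}^*\otimes f_i(c)$ using the relation between $p_{i+1}, p_{i+1}^*$ and $h_{i+1}+dq_{i+1}$, and the $3$-slit identity closing things up. Once this homotopy is exhibited, postcomposing with $p$ gives equivalent maps $\mathcal{C}\to\mathcal{D}$, so if one such composition is equivalent to $f$, then so is the other. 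This proves the first sentence.

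For the second sentence, recall that $u_{\cC'}\in H(\widehat{tel}(\mathcal{C}'))$ is by definition the image of $1\in H(\widehat{tel}(U))$ under the map induced by $U_{\mathcal{C}'}$. Since $\widehat{tel}$ sends equivalent morphisms of $1$-rays to chain-homotopic maps of chain complexes (as recorded in the first subsection of Section~\ref{sec:alg}), the equivalent choices $U_{\mathcal{C}'}$ and $U_{\mathcal{C}'}^*$ induce the same map on cohomology, hence send $1$ to the same class $u_{\cC'}$.

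I do not anticipate any substantive obstacle; the whole argument is essentially the bookkeeping already performed in the preceding discussion. The only delicate point is tracking Koszul signs in the $3$-slit check for $c\mapsto q_{i+1}\otimes f_i(c)$, where, as the text notes, two sign changes must cancel to yield the correct homotopy identity. I would simply verify this carefully and then refer to the explicit formulas above as the proof.
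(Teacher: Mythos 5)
Your proposal is correct and mirrors the paper's own argument, which is exactly the chain of explicit homotopy-data computations carried out in the paragraphs immediately preceding the lemma (the maps $c\mapsto h_i\otimes c$ and $c\mapsto q_{i+1}\otimes f_i(c)$, the bookkeeping identities, and the sign check), followed by the observation that $\widehat{tel}$ sends equivalent morphisms of $1$-rays to chain-homotopic maps and hence induces the same map on cohomology. The only thing you add over the paper's terse ``Hence, we proved the following'' is making explicit the postcomposition-with-$p$ step and the functoriality argument for $u_{\cC'}$, both of which are implicit in the paper.
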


The next lemma is the main result of this section.

\begin{lem} 
	\label{lem:realis_h}
	Assume that $U_{\mathcal{C}'}$ is a realization of $f$ via $p$. Let $u_{\mathcal{C}'}$ be the image of $1$ under the map $\Lambda=H^*(\widehat{tel}(U))\to H(\widehat{tel}(\mathcal{C}'))$ as before. Then the map obtained by inputting $u_{\mathcal{C}'}$ in $$H(\widehat{tel}(\mathcal{C}'))\otimes H(\widehat{tel}(\mathcal{C}))\to H(\widehat{tel}(\mathcal{D}))$$ is equal to the map induced by $f$.
\end{lem}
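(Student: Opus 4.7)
The plan is to combine the realization equivalence $f \simeq p \circ (U_{\mathcal{C}'} \otimes \operatorname{id})$ guaranteed by the hypothesis with the naturality of the product (Lemma \ref{lemmafunctorial}), and then read off the resulting commutative triangle at a class of the form $1 \otimes [x]$. The class $u_{\mathcal{C}'}$ is, by construction, the image of $1$ under the map $\Lambda = H^*(\widehat{tel}(U)) \to H^*(\widehat{tel}(\mathcal{C}'))$ induced by $U_{\mathcal{C}'}$; this will let us identify the "input $u_{\mathcal{C}'}$" operation with the image of $1$ under the vertical edge of the triangle.

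First I would address the hypothesis of Lemma \ref{lemmafunctorial}: that lemma requires the morphism in the first tensor slot to be strict. To arrange this, I would invoke Lemma \ref{lemstrictunit} and replace $U_{\mathcal{C}'}$ by an equivalent strict representative, which preserves both the realization property and the element $u_{\mathcal{C}'}$. A clean way to produce such a representative is to use the strictification $F(\mathcal{C}')$ and the weak equivalence $F(\mathcal{C}') \to \mathcal{C}'$ from diagram \eqref{c2estrictification}: since $U$ is free of rank one concentrated in degree zero and is itself already strict, one lifts $U_{\mathcal{C}'}$ through $F(\mathcal{C}')$ inductively and then pushes back down to obtain an equivalent strict map $U \to \mathcal{C}'$. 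I expect this strictification step to be the main obstacle, as it is the only non-formal point in the proof; it is essentially a soft model-categorical manipulation but requires some chain-level bookkeeping to carry out explicitly.

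Once $U_{\mathcal{C}'}$ is strict, Lemma \ref{lemmafunctorial} applied to the morphism $U_{\mathcal{C}'} : U \to \mathcal{C}'$, the $1$-ray $\mathcal{C}$, and the map $p : \mathcal{C}' \otimes \mathcal{C} \to \mathcal{D}$ produces the commutative triangle
$$
\begin{tikzcd}
H^*(\widehat{tel}(U)) \otimes H^*(\widehat{tel}(\mathcal{C})) \ar[d, "(U_{\mathcal{C}'})_* \otimes \operatorname{id}"'] \ar[r] & H^*(\widehat{tel}(\mathcal{D})) \\
H^*(\widehat{tel}(\mathcal{C}')) \otimes H^*(\widehat{tel}(\mathcal{C})) \ar[ur]
\end{tikzcd}
$$
in which the diagonal is the product map \eqref{homologyproduct} associated to $p$ and the horizontal edge is the map \eqref{homologyproduct} associated to the composition $p \circ (U_{\mathcal{C}'} \otimes \operatorname{id})$.

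To conclude, I would evaluate this triangle at $1 \otimes [x]$ for an arbitrary class $[x] \in H^*(\widehat{tel}(\mathcal{C}))$. Under the canonical identification $U \otimes \mathcal{C} = \mathcal{C}$ of $1$-rays (literal at each slice, since $\Lambda_{\geq 0} \otimes C = C$) together with the resulting K\"unneth-type isomorphism $H^*(\widehat{tel}(U)) \otimes H^*(\widehat{tel}(\mathcal{C})) \cong H^*(\widehat{tel}(\mathcal{C}))$ sending $1 \otimes [x]$ to $[x]$, the top horizontal edge evaluates to the map induced by $p \circ (U_{\mathcal{C}'} \otimes \operatorname{id})$, which by the realization hypothesis coincides with the map induced by $f$. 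The left vertical sends $1 \otimes [x]$ to $u_{\mathcal{C}'} \otimes [x]$ by the very definition of $u_{\mathcal{C}'}$, and the diagonal then outputs exactly the result of inputting $u_{\mathcal{C}'}$ into \eqref{homologyproduct}. Commutativity of the triangle therefore gives $f_*[x] = p_*(u_{\mathcal{C}'} \otimes [x])$, which is the desired equality.
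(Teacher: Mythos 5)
Your proposal is correct and follows essentially the same route as the paper: replace $U_{\mathcal{C}'}$ by an equivalent strict morphism using Lemma \ref{lemstrictunit}, apply the functoriality of the product from Lemma \ref{lemmafunctorial} to get the commutative triangle, and finish by the straightforward identification of the horizontal map with the map induced by $f$ via $\mathcal{C}= U\otimes \mathcal{C}$. The only place you diverge is the strictification step, which you propose to route through $F(\mathcal{C}')$ and describe as ``the main obstacle''; in fact it is elementary and needs none of that machinery: inductively set $u_1'=u_1$ and $u_{i+1}'=f_i(u_i')$, with $q_1=0$ and $q_{i+1}=f_i(q_i)+p_{i+1}$ so that $u_i'=u_i+dq_i$, and the $q_i$ furnish the homotopy to the strict map $\{u_i'\}$ --- this is what the paper means by ``it is easy to see.''
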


\begin{proof}It is easy to see that $U_{\mathcal{C}'}$ is equivalent to a strict morphism. Therefore, using Lemma \ref{lemstrictunit}, we can assume that $U_{\mathcal{C}'}$ is strict.

Then using functoriality as in Lemma \ref{lemmafunctorial}, we can reduce to the case $C'=U$. We finish the proof if we can prove that the composition of all the maps below

\begin{itemize}
\item $H^*(\widehat{tel}(\mathcal{C}))\to H^*(\widehat{tel}(\mathcal{U}))\otimes H^*(\widehat{tel}(\mathcal{C}))$
\item $H^*(\widehat{tel}(\mathcal{U}))\otimes H^*(\widehat{tel}(\mathcal{C}))\to H^*(\widehat{tel}(\mathcal{U})\otimes \widehat{tel}(\mathcal{C}))$;
\item $H^*(\widehat{tel}(\mathcal{U})\otimes \widehat{tel}(\mathcal{C}))\to H^*(\widehat{tel(\mathcal{U})\otimes tel(\mathcal{C})})$;
\item $H^*(\widehat{tel(\mathcal{U})\otimes tel(\mathcal{C})})\to H^*(\widehat{tel}(\mathcal{U}\otimes \mathcal{C}))$;
\end{itemize}
is the same as the map $H^*(\widehat{tel}(\mathcal{C}))\to H^*(\widehat{tel(\mathcal{U}\otimes \mathcal{C})})$ induced by $\mathcal{C}= \mathcal{U}\otimes \mathcal{C}$. This is straightforward.
\end{proof}

\begin{cor} 
	\label{cor:real_iso}
	In the seting of Lemma \ref{lem:realis_h}, let $\cC=\cC'$. Assume that the map  $$H(\widehat{tel}(\mathcal{C}))\otimes\Lambda\to H(\widehat{tel}({\mathcal{D})})\otimes \Lambda$$ induced by $f$ is an isomorphism.  Then, $H(\widehat{tel}(\cC))\otimes\Lambda=0$ if and only if $u_{\mathcal{C}}$ is torsion, i.e.~$u_{\cC}=0$ in $H(\widehat{tel}(\mathcal{C}))\otimes\Lambda$.\end{cor}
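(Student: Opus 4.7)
The plan is to establish the non-trivial direction: assuming $u_{\mathcal{C}} = 0$ in $H(\widehat{tel}(\mathcal{C}))\otimes\Lambda$, derive that the entire space vanishes. The reverse implication is immediate, since if $H(\widehat{tel}(\mathcal{C}))\otimes\Lambda = 0$ then every element, in particular $u_{\mathcal{C}}$, must be zero.

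The key observation I would use is that Lemma \ref{lem:realis_h} reinterprets the map induced by $f$ as ``multiplication by $u_{\mathcal{C}}$'' under the product structure induced by $p$. Explicitly, for every $x\in H(\widehat{tel}(\mathcal{C}))$, the lemma gives
$$
f_*(x) \;=\; p_*(u_{\mathcal{C}}\otimes x),
$$
where $p_*$ denotes the map \eqref{homologyproduct} associated to $p$. This identity is $\Lambda_{\geq 0}$-linear and therefore remains valid after tensoring with $\Lambda$, since the whole construction in Section \ref{subsec:tensor} is carried out at the level of $\Lambda_{\geq 0}$-chain complexes and passes to $\Lambda$-coefficients by flat base change.

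To conclude, I would assume $u_{\mathcal{C}} = 0$ in $H(\widehat{tel}(\mathcal{C}))\otimes\Lambda$. Then bilinearity of $p_*$ gives $f_*(x) = p_*(u_{\mathcal{C}}\otimes x) = 0$ for every $x\in H(\widehat{tel}(\mathcal{C}))\otimes\Lambda$, so $f_*$ is identically zero on $H(\widehat{tel}(\mathcal{C}))\otimes\Lambda$. By hypothesis, however, $f_*$ is an isomorphism onto $H(\widehat{tel}(\mathcal{D}))\otimes\Lambda$, which forces $H(\widehat{tel}(\mathcal{C}))\otimes\Lambda = 0$.

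No real obstacle arises: the corollary is a formal consequence of Lemma \ref{lem:realis_h}. The only mild subtlety is to confirm that the factorization $f_* = p_*(u_{\mathcal{C}}\otimes -)$ remains well-defined after tensoring with $\Lambda$, which is automatic because all intermediate maps in the construction of \eqref{homologyproduct} (including the ones coming from Lemma \ref{lem:tel_tensor}) are $\Lambda_{\geq 0}$-linear.
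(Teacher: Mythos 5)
Your proof is correct and is exactly the (unstated) argument the paper has in mind: the corollary is a formal consequence of Lemma~\ref{lem:realis_h}, with the easy direction being immediate and the substantive direction following from the bilinearity of the pairing~\eqref{homologyproduct} applied to $u_{\cC}\otimes -$ together with the isomorphism hypothesis on $f_*$. The paper omits the proof precisely because it is this short, and your handling of the passage to $\Lambda$-coefficients (flat base change of all the $\Lambda_{\geq 0}$-linear maps in Section~\ref{subsec:tensor}) correctly addresses the only point that could have caused concern.
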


\section{Contact Fukaya trick}\label{secfuk}

\subsection{Necks and admissible functions}

Let $(M,\omega)$ be a closed symplectic manifold and  $\Sigma\times [1  -\alpha,s+\alpha]\subset M$ be a neck as in Definition \ref{defneck}.

Let $U_1$ be the convex filling of $\Sigma=\Sigma\times \{1\}$ and define
$$U_s=U_1\cup (\Sigma\times [1,s]).
$$

Here is the plan of what follows.
The idea is to fix a specially chosen diffeomorphism $g\co M\to M$ which is identity outside of the neck, and acts only on the coordinate $r$ strictly monotonically inside the neck, taking $U_1$ to $U_s$. The next step is to construct cofinal families of Hamiltonians $f_i$ for $U_1$ and $F_i$ for $U_s$, whose Hamiltonian vector fields are related by $g$. Also note that under $g$ the push forward of a compatible almost complex structure, which is cylindrical in the neck, is still tame for the original symplectic structure. We will also make sure that it is compatible near the $1$-periodic orbits. This construction requires  care because $g$ is not a symplectomorphism. 

If we use the above cofinal families and almost complex structures, then $g$ gives a bijection between the Floer solutions contributing to the Floer 1-rays for $U_1$  and $U_s$. However, the topological energies of a Floer cylinder and its $g$-image are different. This is the content of what we call the {\it contact Fukaya trick}\footnote{The name comes from Fukaya's trick that is commonly used in Family Floer theory, see for example Section 8.1 of \cite{Yuan}. We hope that the similarities will be clear to the reader.}, see Proposition~\ref{prop:fukaya_trick}.
In particular, $g$ does not necessarily give rise to a quasi-isomorphism of the corresponding completed telescopes. However, we show that it does give  a quasi-isomorphism in the $c_1(M)=0$, index bounded case as claimed in Proposition \ref{prop:index_bd_collar_invt}. The ``concave filling" case is analogous, so we focus on the convex filling case we just explained.

We begin with the diffeomorphism $g$.
Consider a compactly supported diffeomorphism 
$$
g(r)\co [1-\alpha, s+\alpha]\to [1-\alpha, s+\alpha],\quad g(1)=s,
$$
with the following properties:
$$
\begin{cases}
g(r)\text{ is monotone onto } [1-\alpha,s-\epsilon],& r\in[1-\alpha,1-\epsilon]\\
g(r)=sr,& r\in[1-\epsilon,1]\\
g(r)\text{ is monotone onto }[s,
\tilde s(1+\epsilon)],&r\in [1,1+\epsilon]\\
g(r)=\tilde s r,& r\in[1+\epsilon,1+2\epsilon]\\
g(r)\text{ is monotone onto }[\tilde s(1+2\epsilon),s+\alpha],&r\in [1+2\epsilon,  s+\alpha].
\end{cases}
$$
Recall that $s$ was fixed above; $\tilde s$ and $\epsilon$ are free parameters, $\tilde s>s$.
We fix $\epsilon<\min(\alpha,s)$ to be sufficiently small, and define $\tilde s$
by the equation
\begin{equation}
\label{eq:tilde_s}
\epsilon=\tilde s (1+\epsilon)-s.
\end{equation}
In other words, $\tilde s=(s+\epsilon)/(1+\epsilon)$.
The reason for this choice will be seen later.

Let $\phi$ be the diffeomorphism of $M$ induced by  $g$ on the neck, extended by the identity outside of the neck.

Let $\cR$ be pullback of the Reeb vector field on $\Sigma$ under the projection $[1,s+\alpha]\times \Sigma\to \{1\}\times\Sigma$.
Then for an arbitrary function $f(r)$ of the collar coordinate, its Hamiltonian vector field is
\begin{equation}
\label{eq:Ham_f_neck}
X_{f(r)}=f'(r)\cdot \cR.
\end{equation}

Consider a $C^\infty$ function $f(r)$, $r\in[1-\alpha,s+\alpha]$, with the following properties:
$$
\begin{cases}
f(r)=cr+b,& r\in[1-\alpha,1-\epsilon]\\
f(r)\text { is monotone and has total increase $\delta$,}& r\in[1-\epsilon,1],\\
f(1)=0,\\
f(r)=Kr  +c_K,&r\in [1,1+\epsilon]\\
f(r)\text { is monotone and has total increase $\delta$,}& r\in[1+\epsilon,1+2\epsilon]\\
f(r)=cr+B,&r\in [1+2\epsilon, s+\alpha].
\end{cases}
$$
We  think of $\delta$ and the slopes $c,K$ as free parameters, whereas the constants $b,c_K,B$ are determined by continuity. Total increase $\delta$ means, for instance in the first case, that $f(1)-f(1-\epsilon)=\delta$. We call such functions of the coordinate $r$ {\it profile functions}.

Note that for any given $\delta>0$, for sufficiently small $c$ one has:
\begin{equation}
\label{eq:osc_collar}
f(1)-f(1-\alpha)<2\delta\quad\text{and}\quad
f(s+\alpha)-f(1+\epsilon)<2\delta.
\end{equation}

Now fix some $\delta>0$.
We call a time-independent Hamiltonian $H\co M\to\bR$ {\it admissible with profile $f$} if it: 
\begin{itemize}
	\item 
	restricts to $f(r)$ on the neck $\Sigma\times [1  -\alpha,s+\alpha]$, 
	
	\item
has non-degenerate periodic orbits outside of the neck, for convenience all assumed to be constant. 

\end{itemize}

Moreover, we demand that the profile $f(r)$ of an admissible Hamiltonian satisfies:
\begin{itemize}
	\item  $K$ is not a period of $\cR$;
	\item $c$ is smaller than the periods of the periodic orbits of $\cR$;
	\item if $f'(r)$ is equal to a period of $\cR$, then $f''(r)\neq 0$;
	\item $(s-1+\alpha)c<c(1-\alpha)+b$ and $c(1-\alpha)+b+\epsilon>0$.
\end{itemize}

Note that as a result of our assumptions, the non-constant periodic orbits of $H$ are transversely non-degenerate. We can make time-dependent perturbations to $H$ supported in arbitrarily small neighborhoods of its non-constant periodic orbits, which are arbitrarily $C^{\infty}$-small, so that the perturbed Hamiltonian is non-degenerate. Moreover, for each non-constant periodic orbit, the perturbed Hamiltonian has two periodic orbits with periods (the integral of the Liouville form) arbitrarily close to the period of the original orbit and indices that differ from the Conley-Zehnder index of the corresponding Reeb orbits by at most $1$. We will ignore these perturbations in the narrative and only talk about them when there is something necessary to point out. We will refer to them as harmless perturbations.

\subsection{Matching profiles }
Recall the diffeomorphism $\phi\co M\to M$ defined above, determined by the function $g(r)$. For a profile function $f(r)$, consider the pushforward of its Hamiltonian vector field under $\phi$:
\begin{equation}
\label{eq:phi_X}
\phi_*X_{f(r)}=f'(g^{-1}(r))\cdot \cR.
\end{equation}

%


We claim  that $\phi_*X_{f(r)}$ is a Hamiltonian vector field of the smooth function $F(r)$:
$$
\begin{cases}
F(r)=cr+\tilde b,& r\in[1-\alpha,s(1-\epsilon)]\\
F(r)=sf(r/s),& r\in[s(1-\epsilon),s]\\
F(r)=Kr+\tilde{c}_K,&r\in [s,\tilde s(1+\epsilon)]\\
F(r)=\tilde s f(r/\tilde s)+\tilde d,& r\in[\tilde s(1+\epsilon),\tilde s(1+2\epsilon)]\\
F(r)=cr+\tilde{B},&r\in [\tilde s(1+2\epsilon),s+\alpha].
\end{cases}
$$ 
The constants $\tilde b,\tilde{c}_K, \tilde d,\tilde B$ are determined by continuity. See Figure~\ref{fig:neck_push}.

\begin{rem}
Clearly, $F(r)$ is determined up to an additive constant, and we have chosen the normalization $F(s)=0$. This is seen from the fact that $f(1)=0$.
\end{rem}

\begin{figure}[h]
	\includegraphics[]{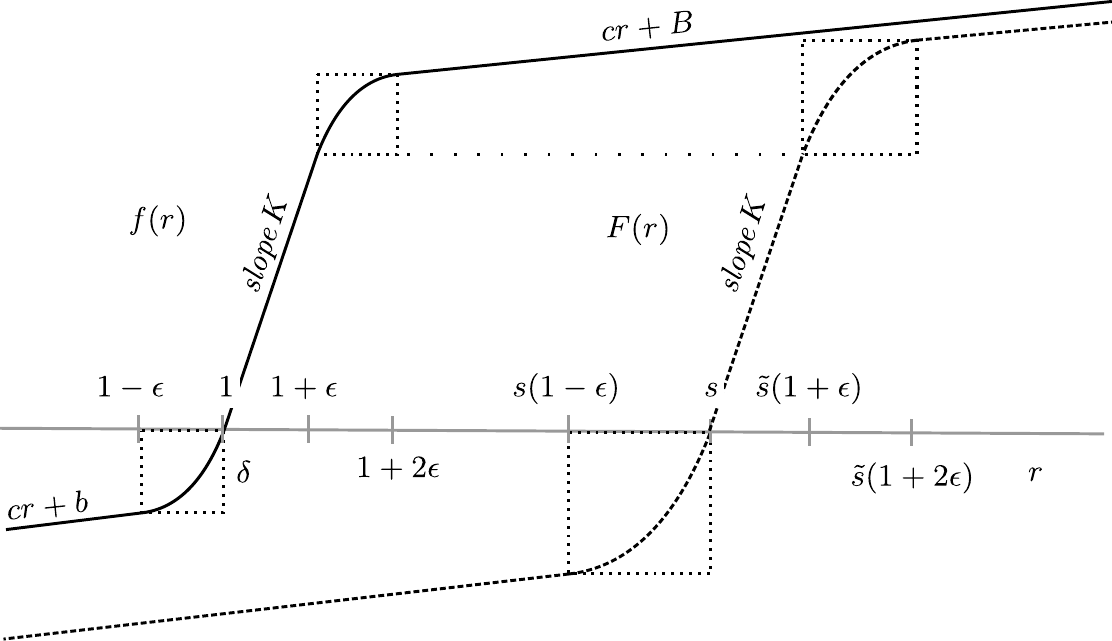}
	\caption{A profile function $f$ (solid plot), and a matching function $F$ (dotted plot).}
	\label{fig:neck_push}
\end{figure}

We claim that 
\begin{equation}
\label{eq:X_F}
\phi_*X_{f(r)}=X_{F(r)}.
\end{equation}
Indeed, in view of (\ref{eq:phi_X}) and (\ref{eq:Ham_f_neck}) this is equivalent to:
$$
F'(r)=f'(g^{-1}(r)),
$$
which is easy to check.
We say that this function $F(r)$ {\it matches} $f(r)$.

\begin{lemma}
	\label{lem:match_dif_1}
	Fix a constant $C>0$. Then for all $K>0$, and all sufficiently small $c,\delta>0$, the following holds.
	Suppose $f(r)$ is a profile with sufficiently small parameters $c,\delta$ and arbitrary $K$. Let $F(r)$ match $f$. It holds that:
	\begin{equation}
	\label{eq:F_f_dif}
	|F(1-\alpha)-f(1-\alpha)|<C,\quad |F(s+\alpha)-f(s+\alpha)|<C,
	\end{equation}
	and
	\begin{equation}
	\label{eq:F_f_curvy_region}
	|F(g(r))-f(r)|<C\quad\text{for all}\quad\text{for}\quad r\in [1-\epsilon,1]\cup[1+\epsilon,1+2\epsilon].
	\end{equation}
\end{lemma}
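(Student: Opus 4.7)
The plan is to solve for every constant appearing in the formulas for $f$ and $F$ by continuity at their joints, and then observe that all four bounds reduce to quantities of order $c+\delta$, uniformly in $K$.

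I would begin by computing the constants of the profile from the normalization $f(1)=0$ together with the total-increase conditions: one reads off $f(1-\epsilon)=-\delta$, $f(1+\epsilon)=K\epsilon$, $f(1+2\epsilon)=K\epsilon+\delta$ (for the monotone-increasing choice; the decreasing case is symmetric), so
\[
b=-\delta-c(1-\epsilon),\qquad c_K=-K,\qquad B=K\epsilon+\delta-c(1+2\epsilon).
\]
Next, continuity of $F$ at the four joints $s(1-\epsilon)$, $s$, $\tilde s(1+\epsilon)$, $\tilde s(1+2\epsilon)$, anchored by the normalization $F(s)=0$ inherited from $f(1)=0$, gives
\[
\tilde b = sb,\qquad \tilde c_K=-Ks,\qquad \tilde d = K(\tilde s-s)=\frac{K\epsilon(1-s)}{1+\epsilon},
\]
and a short algebraic manipulation then produces the key identity $\tilde B - B = (\tilde s-1)B + \tilde d = \frac{s-1}{1+\epsilon}\bigl(\delta-c(1+2\epsilon)\bigr)$, in which the $K\epsilon$ terms cancel.

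With these formulas I would verify the four inequalities directly. On the outer slope-$c$ segments $F-f$ is literally a constant: $F(1-\alpha)-f(1-\alpha)=(s-1)b$ and $F(s+\alpha)-f(s+\alpha)=\tilde B-B$, and both expressions are $O(c+\delta)$ uniformly in $K$. On the curvy region $[1-\epsilon,1]$, $g(r)=sr$, so $F(g(r))=sf(r)$ and $F(g(r))-f(r)=(s-1)f(r)$; since $f(r)\in[-\delta,0]$ there, this is bounded by $(s-1)\delta$. On $[1+\epsilon,1+2\epsilon]$, where $g(r)=\tilde s r$, we get $F(g(r))-f(r)=(\tilde s-1)f(r)+\tilde d$; writing $f(r)=K\epsilon+\phi(r)$ with $|\phi(r)|\leq\delta$, the explicit value of $\tilde d$ exactly cancels $(\tilde s-1)K\epsilon$, leaving $(\tilde s-1)\phi(r)$, bounded by $(s-1)\delta/(1+\epsilon)$. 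Altogether every difference is at most $C_0(s,\epsilon)\,(c+\delta)$, so for $c,\delta$ small enough (depending only on $C$, $s$, and $\epsilon$) all four are below $C$.

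The main obstacle worth flagging is that the naive estimate on $[1+\epsilon,1+2\epsilon]$ would carry a residual error of order $K\epsilon$, which is fatal because $K$ may be arbitrarily large while only $c$ and $\delta$ are at our disposal. The entire point of the calibration $\epsilon=\tilde s(1+\epsilon)-s$ from (\ref{eq:tilde_s}) is that it forces $\tilde d$ to absorb precisely this $K$-dependent term, both in $\tilde B - B$ and in $F(g(r))-f(r)$ on the upper curvy region; this cancellation is what makes the lemma uniform in $K$ and is the technical core of the contact Fukaya trick.
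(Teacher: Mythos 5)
Your proof is correct and is essentially the same computation the paper does: you solve for the affine constants by continuity and the normalizations $f(1)=F(s)=0$, then observe that the calibration $\epsilon=\tilde s(1+\epsilon)-s$ forces $\tilde d=-(\tilde s-1)K\epsilon$ to absorb the $K$-dependence on the upper curvy piece and in $\tilde B-B$, leaving every difference $O(c+\delta)$. The paper phrases this by evaluating $f$ and $F$ at the endpoints of each segment rather than naming the intercepts $b,\tilde b,B,\tilde B,\tilde d$, but the arithmetic and the key cancellation are identical; your identification of the cancellation as the technical heart of the contact Fukaya trick matches the paper's remark that $\epsilon=\tilde s(1+\epsilon)-s$ is of ``crucial significance.''

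One small remark: the paper's displayed value $f(s+\alpha)=K\epsilon+\delta+c(s+\alpha-(1+\epsilon))$ appears to have a minor typo (the last term should be $c(s+\alpha-(1+2\epsilon))$), which your version of the constant $B=K\epsilon+\delta-c(1+2\epsilon)$ gets right; this does not affect the conclusion since the error is $O(c)$. Also, your parenthetical ``the decreasing case is symmetric'' is unnecessary --- the profile is defined to be monotone increasing on the curvy pieces, so there is only one case.
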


\begin{rem}
	The important point is that the above bounds do not depend on $K$ which can be arbitrarily large.
Note that as $K\to+\infty$, $f(r)\to+\infty$ on $(1,s+\alpha]$ and $F(r)\to+\infty$ on $(s,s+\alpha]$.
\end{rem}

\begin{proof}
	Recall that $f(1)=F(s)=0$. By construction,
	$$
	\begin{array}{l}
	f(1-\alpha)=-\delta-c(\alpha-\epsilon),\\ F(1-\alpha)=-\delta s-c(s(1-\epsilon)-(1-\alpha)).
	\end{array}
	$$
	Both values do not depend on $K$, and are small when $c,\delta$ are small.

	Recall that according to (\ref{eq:tilde_s}), $\epsilon=\tilde s (1+\epsilon)-s$. This choice has the following crucial significance. By construction, $f$ has slope $K$ on the segment $[1,1+\epsilon]$ of length $\epsilon$, so it has total increase $K\epsilon$ on this segment. Again by construction, $F$ has slope $K$ on the segment $[s,\tilde s(1+\epsilon)]$, which is of length $\epsilon$  by (\ref{eq:tilde_s}). So
	
	$$
	\begin{array}{l}
	f(1+\epsilon)=K\epsilon,\\
	F(\tilde s(1+\epsilon))=K\epsilon.
	\end{array}
	$$
	Continuing the computation,
$$
	\begin{array}{l}
	f(s+\alpha)=K\epsilon+\delta+c(s+\alpha-(1+\epsilon)),\\
	F(s+\alpha)=K\epsilon +\delta\tilde s+c(s+\alpha-\tilde s(1+2\epsilon)).
	\end{array}
$$
We see that the difference is small, if $\delta,c$ are small. We have proved (\ref{eq:F_f_dif}).

To prove (\ref{eq:F_f_curvy_region}),  one easily checks:
$$
\begin{array}{l}
f([1-\epsilon,1])=[-\delta,0],\\
F(g([1-\epsilon,1]))= [-\delta s,0],
\end{array}
$$
and
$$
\begin{array}{l}
f([1+\epsilon,1+2\epsilon])=[K\epsilon,K\epsilon+\delta],\\
F(g([1+\epsilon,1+2\epsilon]))\subset [K\epsilon, K\epsilon+\delta\tilde s].
\end{array}
$$
Now (\ref{eq:F_f_curvy_region}) easily follows.
\end{proof}	

Let $H$ be an admissible Hamiltonian with profile $f$, and let $F$ match $f$. We say that a Hamiltonian $\tilde H$ on $M$ {\it matches $H$} if:
\begin{itemize}
	\item $\tilde H$ restricts to $F(r)$ on the neck $\Sigma\times [1  -\alpha,s+\alpha]$, and 
	
	\item on each of the two connected components of the complement of the neck, $H-\tilde H$ is constant. Explicitly,
	\begin{equation}
	\label{eq:H_h_dif_col_1}
	\tilde H=H+F(1-\alpha)-f(1-\alpha) \quad \text{on}\quad U_{1-\alpha},
	\end{equation}
	and
	\begin{equation}
	\label{eq:H_h_dif_col_2}
	\tilde H=H+F(s+\alpha)-f(s+\alpha) \quad \text{on}\quad M\setminus U_{s+\alpha}.
	\end{equation}
	
\end{itemize}

\begin{lemma}
	\label{lem:phi_H}
If $\tilde H$ matches $H$, then $	\phi_*X_{H}=X_{\tilde H}$.
\end{lemma}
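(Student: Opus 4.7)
The plan is to verify the identity $\phi_*X_H = X_{\tilde H}$ by splitting $M$ into the neck and its complement, handling each region separately and patching along the boundary hypersurfaces. The only non-trivial input is the already-established pushforward identity for radial Hamiltonians on the neck.

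On the neck $\Sigma\times[1-\alpha,s+\alpha]$ both Hamiltonians depend only on the coordinate $r$: admissibility gives $H=f(r)$ there, and by the definition of matching $\tilde H=F(r)$ there. Here the identity reduces immediately to (\ref{eq:X_F}), namely $\phi_*X_{f(r)}=X_{F(r)}$. Off the neck, $\phi$ is the identity by construction, so it suffices to verify $X_{\tilde H}=X_H$ there. By (\ref{eq:H_h_dif_col_1}) and (\ref{eq:H_h_dif_col_2}), on each of the two complementary components $U_{1-\alpha}$ and $M\setminus U_{s+\alpha}$ the function $\tilde H - H$ is a constant, and Hamiltonian vector fields are insensitive to additive constants, so $X_{\tilde H}=X_H=\phi_*X_H$ on each component. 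The three regions together cover $M$, and the identities agree on the overlap hypersurfaces $\Sigma\times\{1-\alpha\}$ and $\Sigma\times\{s+\alpha\}$ by continuity.

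I do not expect any serious obstacle. The only implicit smoothness check is of $\tilde H$ across the boundary of the neck, and this is guaranteed by the construction: both $f$ and $F$ are affine with slope $c$ in a neighbourhood of each end of the neck, so their jets agree up to precisely the additive constants prescribed in (\ref{eq:H_h_dif_col_1})--(\ref{eq:H_h_dif_col_2}). The lemma is thus a bookkeeping statement recording that the constants appearing in the definition of matching are exactly those needed to make $X_H$ and $X_{\tilde H}$ pushforwards of each other under $\phi$.
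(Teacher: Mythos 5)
Your proof is correct and follows the same approach as the paper: reduce to the already-established identity $\phi_*X_{f(r)}=X_{F(r)}$ on the neck, and note that off the neck $\phi$ is the identity while $\tilde H-H$ is locally constant. The extra smoothness check you include is a harmless elaboration already implicit in the construction.
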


\begin{proof}
	On the neck, the claim follows from the analogous fact about $F(r)$ and $f(r)$,  see Equation~(\ref{eq:phi_X}). 
Away from the neck, $\phi$ is the identity and $\tilde H-H$ is locally constant, so the claim holds there too. 
\end{proof}	

This Lemma can be easily modified to include harmless perturbations of $\tilde H$ and $H$. The main point is that in the regions where $H$ has its non-constant periodic orbits, the diffeomorphism $\phi$ is a symplectomorphism up to multiplication by a constant. Therefore, if the vector fields of the Hamiltonian perturbations are moved with $\phi$, they will stay Hamiltonian perturbations and act as harmless perturbations to $\tilde H$. Of course, there are various smallness assumptions that we are not explicitly writing down.

\subsection{Cofinal families}
Consider a neck $\Sigma\times [1  -\alpha,s+\alpha]\subset M$. As above, fix
$\epsilon>0$, a diffeomorphism $g$ of $[1-\alpha,s+\alpha]$ and the induced diffeomorphism $\phi$ of $M$. 
Fix monotonically decreasing sequences of positive numbers $\delta_i\to 0$, $c_i\to 0$, $\beta_i\to 0$, and a monotonically increasing sequence $K_i\to +\infty$. For each $i$, choose a profile  function $f_i(r)$ with the given parameters $\epsilon ,\delta_i,c_i,K_i$. We require that $f_i(r)\le f_{i+1}(r)$ for all $r\in[1-\alpha,s+\alpha]$.
Note that
$$
f_i(r)-\beta_i \to
\begin{cases}
0,& r\le 1\\
+\infty,&r>1
\end{cases} 
$$
The same is true about $f_i(r)$ without the summand $
\beta_i$. We have subtracted $\beta_i$ to make sure that $f_i(r)-\beta_i<0$ for $r\le 1$ as required in the definition of a cofinal family (recall that $f_i(1)=0$).
Let $F_i(r)$ match $f_i(r)$, then
$$
F_i(r)-\beta_i \to
\begin{cases}
0,& r\le s\\
+\infty,&r>s
\end{cases}.
$$
Let $H_i$ be an admissible Hamiltonian with profile $f_i$, and $\tilde H_i$ be an admissible Hamiltonian with profile $F_i$. Then $\{H_i\}$ are a cofinal family for $U_1\subset M$, and $\{\tilde H_i\}$ are a cofinal family for $U_s\subset M$.

\begin{lemma}
	Let $J$ be a tame almost complex structures  which is cylindrical on
	\begin{equation}
	\label{eq:sigma_times_int_1}
	\Sigma\times \left([1-\alpha,s+\alpha]\right),
	\end{equation}
	and compatible with $\omega$ on
	
Then $\phi_*J$ is tame, and moreover, it is compatible on \begin{equation}
	\label{eq:sigma_times_int_2}
	\Sigma\times \left([s(1-\epsilon)s,s]\cup[\tilde{s}(1+\epsilon),\tilde{s}(1+2\epsilon)]\right).
	\end{equation}
\end{lemma}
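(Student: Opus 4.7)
The plan is to verify the two claims pointwise, partitioning $M$ into three regions: the complement of the neck, the two linear sub-regions $\Sigma\times[1-\epsilon,1]$ and $\Sigma\times[1+\epsilon,1+2\epsilon]$ where $g$ is a pure rescaling, and the remaining ``curved'' parts of the neck. Outside the neck $\phi=\id$, so $\phi_*J=J$ and both conclusions hold automatically from the hypotheses on $J$.

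For the compatibility claim, on the two linear sub-regions one has $g(r)=sr$ and $g(r)=\tilde s r$ respectively, so $\phi^*\omega=s\omega$ and $\phi^*\omega=\tilde s\omega$, because the symplectization form $d(r\theta)$ simply rescales by a constant under $r\mapsto cr$. For any diffeomorphism satisfying $\phi^*\omega=c\omega$ with $c>0$, the pushforward of an $\omega$-compatible almost complex structure is again $\omega$-compatible, as one sees immediately from the definitions. Since a cylindrical $J$ is compatible with $\omega$ on the symplectization, this establishes the compatibility claim on the images $\Sigma\times[s(1-\epsilon),s]$ and $\Sigma\times[\tilde s(1+\epsilon),\tilde s(1+2\epsilon)]$.

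It remains to verify tameness on the curved sub-regions of the neck, which is the only step requiring real computation. Using the pointwise decomposition $T(\Sigma\times[1-\alpha,s+\alpha])=\langle\partial_r\rangle\oplus\langle\cR\rangle\oplus\xi$ with $\xi=\ker\theta$, together with the standard formulas $J\partial_r=\cR/r$, $J\cR=-r\partial_r$, $J\xi=\xi$ for a cylindrical $J$, combined with $(d\phi)\partial_r=g'(r)\partial_{r'}$ and $d\phi=\id$ on $\cR\oplus\xi$, I would write $\phi_*J$ out explicitly in this basis. Expanding $\omega(v,(\phi_*J)v)$ with $\omega=dr'\wedge\theta+r'd\theta$, the cross pairings among the three summands vanish (since $\theta(\xi)=0$ and $i_\cR d\theta=0$), leaving a sum of non-negative diagonal terms carrying explicit factors of $g'(r)$ or $1/g'(r)$. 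Since $g'(r)>0$ throughout the neck and $d\theta(\cdot,J\cdot)$ is positive on $\xi$, this is strictly positive whenever $v\neq 0$, giving tameness.

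The main obstacle is simply keeping the final expansion and cross-term cancellations straight. Once the formula for $\phi_*J$ in the above decomposition is correctly recorded, all positivity reduces to the sign of $g'$ and the positivity of $d\theta(\cdot,J\cdot)|_\xi$.
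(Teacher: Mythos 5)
Your proof is correct and follows essentially the same structure as the paper's: restrict to the neck since $\phi=\id$ outside, observe that on the two linear sub-regions $\phi$ is conformally symplectic (a constant rescaling of $r$ is a shift of the Liouville coordinate) so a compatible/cylindrical $J$ pushes forward to a compatible one, and use $g'>0$ together with cylindricity to get tameness in between. The paper simply \emph{asserts} tameness in one line; you carry out the $\langle\partial_r\rangle\oplus\langle\cR\rangle\oplus\xi$ computation that underlies that assertion, which is a reasonable expansion of what the authors left implicit. The only caveat is that the lemma statement in the source is garbled (the clause "and compatible with $\omega$ on'' trails off, and the label in~\eqref{eq:sigma_times_int_2} has a stray $s$); your reading — that $J$ is tame everywhere, cylindrical (hence automatically $\omega$-compatible) on the neck, and the conclusion is compatibility of $\phi_*J$ on $\Sigma\times\bigl([s(1-\epsilon),s]\cup[\tilde s(1+\epsilon),\tilde s(1+2\epsilon)]\bigr)$ — is the intended one and is consistent with how the lemma is used afterward.
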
 

\begin{proof}
	Since $\phi$ is identity outside of the neck, it suffices to check this on the neck. First, recall that $J$ is cylindrical and $\phi$ only depends on the radial coordinate on the region (\ref{eq:sigma_times_int_1}), so $\phi_*J$ is tame on the image of that region. 
	
	Second, recall that $\phi$ scales the neck coordinate $r$ by a fixed factor  on each of the two components of the region (\ref{eq:sigma_times_int_2}). Because the neck coordinate $r$ is the exponential of the Liouville coordinate, $\phi$ acts by a shift of the Liouville coordinate on (\ref{eq:sigma_times_int_2}). Hence, $\phi$ is conformally symplectic onto the image  of (\ref{eq:sigma_times_int_2}). So  $\phi_*J$ is still cylindrical on that region, in particular it is compatible.
\end{proof}

Let $f_i$ and $F_i$ be as above, choose Hamiltonians $\{H_i\}$ with profile $f_i$, and let $\tilde H_i$ match $H_i$ (so that $\tilde H_i$ has profile $F_i$).
We can arrange that $H_i$ is a cofinal family for $U_1$, and $\tilde H_i$ is a cofinal family for $U_s$.

Next, choose monotone homotopies between the $f_i$ within the space of profile functions. Extend them to monotone homotopies between the $H_i$.
Consider the matching monotone homotopies between the $F_i$, and extend them to monotone homotopies between the $\tilde H_i$. This produces Floer 1-rays
$\cC$ (for $U_1$ using $\{H_i\}$) and $\cC'$ (for $U_s$ using $\{\tilde H_i\}$).

\begin{lemma}
	\label{lem:phi_bij}
The diffeomorphism $\phi$ takes solutions of the Floer equation contributing to $CF^*(H_i,J_i)$ bijectively to solutions for $CF^*(\tilde H_i,\phi_*J_i)$. The analogous holds for continuation maps.

In other words, $\phi$ takes all solutions contributing to the structure maps of  $\cC$ bijectively onto Floer solutions contributing to the structure maps of $\cC'$.
\end{lemma}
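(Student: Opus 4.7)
The plan is to verify that the Floer equation transforms covariantly under the non-symplectic diffeomorphism $\phi$, so that $u \mapsto \phi \circ u$ gives the desired bijection on solution spaces.

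First I would dispose of the bijection on generators. Because $\phi_* X_{H_i} = X_{\tilde H_i}$ by Lemma \ref{lem:phi_H}, pushing forward orbits gives a map $\gamma(t) \mapsto \phi(\gamma(t))$ between 1-periodic orbits of $X_{H_i}$ and of $X_{\tilde H_i}$, with inverse given by $\phi^{-1}$. Non-degeneracy is preserved because the linearized flows are conjugate via $d\phi$. The same applies after the harmless time-dependent perturbations near the non-constant orbits, provided the perturbations on the $\tilde H_i$ side are chosen as the $\phi$-pushforwards of those on the $H_i$ side; this is legitimate since on the support of those perturbations $\phi$ is conformally symplectic, so Hamiltonian perturbations pull back to Hamiltonian perturbations.

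Next I would check the equation itself. Let $u\colon \bR \times S^1 \to M$ solve
\begin{equation*}
\partial_s u + J_{i,t}(u)\bigl(\partial_t u - X_{H_{i,t}}(u)\bigr) = 0,
\end{equation*}
and set $v = \phi \circ u$. Applying $d\phi$ to the above equation and using $d\phi \circ X_{H_{i,t}} = X_{\tilde H_{i,t}} \circ \phi$ together with the definition $(\phi_* J_{i,t})(v) = d\phi \circ J_{i,t} \circ d\phi^{-1}$, one gets
\begin{equation*}
\partial_s v + (\phi_* J_{i,t})(v)\bigl(\partial_t v - X_{\tilde H_{i,t}}(v)\bigr) = 0,
\end{equation*}
which is the Floer equation for $(\tilde H_i, \phi_* J_i)$. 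The inverse operation $v \mapsto \phi^{-1} \circ v$ symmetrically sends solutions back. The asymptotic conditions match by the first paragraph, so this yields a bijection of the moduli spaces cut out as sets. Since $d\phi$ is also an isomorphism on linearizations, cutoff behavior and transversality are interchanged as well.

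The same verbatim calculation works for continuation solutions, because the monotone homotopies on the $\tilde H_i$ side were chosen to match those on the $H_i$ side (in the sense of the preceding subsection) and the homotopy of almost complex structures on the target is taken to be the $\phi$-pushforward of the one on the source; the $s$-dependence of the homotopy plays no role in the computation above. Thus we obtain the claimed bijection between solutions contributing to every structure map (differentials and continuations) of $\mathcal{C}$ and those of $\mathcal{C}'$. I do not expect a genuine obstacle here: the content is purely the naturality of the Floer equation under diffeomorphisms, together with the matching constructions already set up. The one thing worth being careful about is the bookkeeping of perturbations, which is why the harmless perturbations are transported by $\phi$ rather than chosen independently on the two sides.
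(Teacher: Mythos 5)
Your argument is essentially the paper's proof, which the authors compress into the single remark that this is a reformulation of Lemma~\ref{lem:phi_H} (the identity $\phi_*X_{H}=X_{\tilde H}$), together with the observation that matching monotone homotopies give the corresponding statement for continuation maps. You have simply unpacked the naturality of the Floer equation under pushforward and the transport of the harmless perturbations, all of which is correct and in line with what the paper intends.
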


\begin{proof}
	This is a reformulation of Lemma~\ref{lem:phi_H}. The analogous property is true for matching monotone homotopies  constructed using matching profile functions.
\end{proof}

\subsection{Contact Fukaya trick}
Consider cofinal families $H_i$ for $U_1$ and $\tilde H_i$ for $U_s$, with Floer 1-rays $\cC$ and $\cC'$ as above.
We will translate Lemma~\ref{lem:phi_bij} to a comparison between $\cC$ and $\cC'$.

A generator of $\cC$ is a 1-periodic orbit of $H_i$ for some $i$. These generators can be of two types:

\begin{itemize}
	\item constant orbits in $U_{1-\alpha}$ and $M\setminus U_{s+\alpha}$,
	\item non-constant orbits in $\Sigma\times ([1-\epsilon,1]\cup [1+\epsilon,1+2\epsilon])$.
\end{itemize}
Generators of $\cC'$ are 1-periodic orbits of $\tilde H_i$ for some $i$; they are of two types as above, with the difference that non-constant orbits belong to $\Sigma\times ([s(1-\epsilon),s]\cup [\tilde s(1+\epsilon),\tilde s(1+2\epsilon)])$. 

By Lemma~\ref{lem:phi_H}, $\phi$ induces a bijection between the generators of $\cC$ and $\cC'$, $\gamma\mapsto \phi(\gamma)$.
Lemma~\ref{lem:phi_bij} states that there is a bijection at the level of structure maps; however, note that the topological energies of $u$ and $\phi(u)$ are different. The proposition below expresses this difference.

\begin{prop}[Contact Fukaya trick]
	\label{prop:fukaya_trick}
Consider the natural bijection between the generators of $\cC$ and $\cC'$ discussed above: $\gamma\mapsto \tilde\gamma\coloneqq \phi(\gamma)$. It upgrades to a  strict isomorphism between the chain complexes
$$
\Phi\co tel(\cC)\otimes \Lambda \to tel(\cC')\otimes \Lambda,$$ 
which has the following form:
$$
\Phi\co \gamma\mapsto  T^{\Delta(\gamma)}\cdot \tilde\gamma,
$$
for both generators of $ tel(\cC)$ corresponding to $\gamma$.
The numbers $\Delta(\gamma)\in \bR$ are defined in (\ref{eq:delta_gamma}) below, and satisfy:
\begin{equation}
\label{eq:delta_O1}
\Delta(\gamma)=\omega(C(\gamma,\tilde \gamma))+O(1).
\end{equation}
Here the first summand for $\gamma$ a non-constant orbit is the area of the trivial cylinder $C(\gamma,\tilde \gamma)\subset X$ spanning $\gamma$ and $\tilde \gamma$, and contained in the neck; for $\gamma$ a constant orbit it is zero. The second summand $O(1)$ is a quantity depending on $\gamma$ which is uniformly bounded in  absolute value across all generators.
\end{prop}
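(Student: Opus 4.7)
The plan is to use the diffeomorphism $\phi$ to identify generators and Floer solutions on the two sides, and then correct the resulting bijection by orbit-wise Novikov weights $T^{\Delta(\gamma)}$. By Lemma~\ref{lem:phi_bij}, $\phi$ already carries Floer and continuation solutions for the $H_i$ bijectively to the analogous solutions for the $\tilde H_i$, so the only thing that can fail is that the Novikov weights $T^{E_{top}(u)}$ and $T^{E_{top}(\phi(u))}$ disagree. If $\Delta(\gamma)$ satisfies the cocycle identity
\[ \Delta(\gamma_{\rm out})-\Delta(\gamma_{\rm in}) \;=\; E_{top}(\phi(u))-E_{top}(u) \]
for every solution cylinder $u$ (Floer or continuation), then setting $\Phi(\gamma) := T^{\Delta(\gamma)}\tilde\gamma$ on both copies of $\gamma$ in the telescope tautologically intertwines Floer differentials, continuation maps, and the identity-type components of the telescope differential, producing the required strict isomorphism. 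The problem reduces to constructing such $\Delta$.

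Using \eqref{eq:E_top}, split $E_{top}(\phi(u)) - E_{top}(u) = [D(\gamma_{\rm out}) - D(\gamma_{\rm in})] + [\omega(\phi(u)) - \omega(u)]$, where $D(\gamma) := \int_{S^1}\phi(\gamma)^*\tilde H - \int_{S^1}\gamma^* H$ is already of coboundary form. To rewrite the symplectic-area defect in the same way, fix an isotopy $\{\phi_s\}_{s\in[0,1]}$ from $\id$ to $\phi$ supported in the neck---obtained by interpolating $g$ through a family of radial diffeomorphisms---with generating vector field $X_s$. Since $d\omega=0$, Cartan's formula gives $\phi^*\omega-\omega = d\beta$ with $\beta := \int_0^1\phi_s^*(\iota_{X_s}\omega)\,ds$, and Stokes then yields
\[ \omega(\phi(u))-\omega(u) \;=\; \int_{\gamma_{\rm out}}\beta - \int_{\gamma_{\rm in}}\beta. \]
Unpacking the integrand identifies $\int_\gamma\beta$ with the symplectic area of the trace $(s,t)\mapsto\phi_s(\gamma(t))$: this vanishes if $\gamma$ is a constant orbit outside the neck (since $\phi_s$ is the identity there), and it equals the area of the trivial cylinder $C(\gamma,\tilde\gamma)\subset\Sigma\times[r,g(r)]$ when $\gamma$ is a non-constant orbit in $\Sigma\times\{r\}$.

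Setting $\Delta(\gamma) := D(\gamma) + \int_\gamma\beta$ manifestly satisfies the cocycle identity and so gives the desired strict chain isomorphism $\Phi$. The asymptotic $\Delta(\gamma) = \omega(C(\gamma,\tilde\gamma)) + O(1)$ now reduces to showing $D(\gamma)=O(1)$ uniformly over all generators and all $i$. For constant orbits in $U_{1-\alpha}$ or $M\setminus U_{s+\alpha}$, the matching conditions \eqref{eq:H_h_dif_col_1}--\eqref{eq:H_h_dif_col_2} force $D(\gamma)$ to equal one of the global offsets $F(1-\alpha)-f(1-\alpha)$ or $F(s+\alpha)-f(s+\alpha)$, both $O(1)$ by Lemma~\ref{lem:match_dif_1}; for a non-constant orbit at level $r\in[1-\epsilon,1]\cup[1+\epsilon,1+2\epsilon]$, $D(\gamma)=F(g(r))-f(r)=O(1)$ by the same lemma. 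The main technical point to check is uniformity in the cofinal index: the lemma gives bounds independent of $K$ once $c,\delta$ are small, and since $c_i,\delta_i\to 0$ this holds with a universal constant across the entire family. The harmless non-degeneracy perturbations near the transversely non-degenerate non-constant orbits shift $D(\gamma)$ and $\int_\gamma\beta$ by arbitrarily small amounts, which are absorbed into the $O(1)$ term with no further difficulty.
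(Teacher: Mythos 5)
Your proof is correct and follows the same route as the paper: define the orbit-wise defect $\Delta(\gamma)$, verify the cocycle identity $E_{top}(\phi(u))-E_{top}(u)=\Delta(\gamma_{\rm out})-\Delta(\gamma_{\rm in})$ (the paper compresses your Cartan-formula-plus-Stokes computation into the phrase ``because of the obvious diffeotopy from $\phi$ to identity and Stokes theorem''), and then deduce the $O(1)$ bound from Lemma~\ref{lem:match_dif_1} exactly as you do. The extra care you take in unpacking the diffeotopy argument and the remark about uniformity across the cofinal index are welcome but do not change the underlying argument.
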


\begin{proof}
Consider a Floer solution $u$ contributing to $\cC$ with asymptotic orbits $\gamma_{in},\gamma_{out}$.
By Lemma~\ref{lem:phi_bij}, $\phi(u)$ is a Floer solution contributing  to $\cC'$.
Recall the definition of topological energy  (\ref{eq:E_top}):
$$
E_{top}(u)=\int_{S^1}\gamma_{out}^* H_{out}\,dt-\int_{S^1}\gamma_{in}^* H_{in}\,dt+\omega(u).
$$
Define, for a $1$-periodic orbit $\gamma$ of $H_i$:
\begin{equation}
\label{eq:delta_gamma}
\Delta(\gamma)=\left(\int_{S^1} \tilde \gamma^* \tilde H_i-\gamma^*H_i\right)+\omega(C(\gamma,\tilde\gamma)).
\end{equation}
Then for a Floer solution $u$ as above one has:
$$E_{top}(\phi(u))-E_{top}(u)=\Delta(\gamma_{out})-\Delta(\gamma_{in}),$$ because of the obvious diffeotopy from $\phi$ to identity and Stokes theorem.
Recall that a Floer solution  $u$ contributes to $\cC$ as follows:
$$
\gamma_{in}\mapsto T^{E_{top}(u)}\gamma_{out},
$$
while $\phi(u)$ contributes to $\cC'$ as follows:
$$
\tilde \gamma_{in}\mapsto T^{E_{top}(u)+\Delta(\gamma_{out})-\Delta(\gamma_{in})}\tilde \gamma_{out},
$$
or equivalently
$$
T^{\Delta(\gamma_{in})}\tilde \gamma_{in}\mapsto T^{\Delta(\gamma_{out})}T^{E_{top}(u)}\tilde \gamma_{out}.
$$
This can be rewritten as
$$
\Phi(\tilde \gamma_{in})\mapsto T^{E_{top(u)}}\Phi(\tilde \gamma_{out}).
$$
So $\Phi$ is a chain isomorphism.
To show (\ref{eq:delta_O1}), in view of (\ref{eq:delta_gamma}) it remains to show that there exists a constant $C$ such that:
\begin{equation}
\label{eq:diff_H}
\left| \int_{S^1} \tilde \gamma^* \tilde H_i-\gamma^*H_i\right|<C
\end{equation}
for all $1$-periodic orbits $\gamma$ of $H_i$, for all $i$.
We claim that this follows from Lemma~\ref{lem:match_dif_1}. 

To see why, assume first that $\gamma$ is a constant orbit, then it belongs to  $U_\alpha$ or $E_{s+\alpha}$, and $\tilde \gamma=\gamma$. The above difference is simply $\tilde H(\gamma)-H(\gamma)$, where $\gamma$ is seen as a point in $X$. Then (\ref{eq:diff_H}) follows from (\ref{eq:F_f_dif}), (\ref{eq:H_h_dif_col_1}) and (\ref{eq:H_h_dif_col_2}).

Now assume $\gamma$ is a non-constant orbit, then $\gamma$ belongs to the neck where $H_i=f_i$, $\tilde H_i=F_i$. By construction of the $f_i$, $\gamma$ belongs to the region of the neck where $r\in [1-\epsilon,1]\cup[1+\epsilon,1+2\epsilon]$. Here (\ref{eq:F_f_curvy_region}) applies to guarantee (\ref{eq:diff_H}).
\end{proof}

It is clear that the harmless perturbations are harmless from the viewpoint of this Lemma as well.

\subsection{Index boundedness}
Let us recall the setup of this section. We started with a neck $\Sigma\times [1-\alpha,s+\alpha]\subset M$ in a symplectic manifold $M$. We denoted by $U_1$ the domain bounded by $\Sigma\times\{1\}$, and took $U_s=U_1\cup (\Sigma\times [1,s])$.
 We constructed Floer 1-rays $\cC,\cC'$ which compute $SH^*_M(U_1)$ and $SH^*_M(U_s)$, in particular:
 $$
 H^*(\widehat{tel}(\cC))\otimes\Lambda=SH^*_M(U_1, \Lambda),\quad H^*(\widehat{tel}(\cC'))\otimes\Lambda=SH^*_M(U_s,\Lambda). 
 $$
 We explained how to carefully choose these Floer 1-rays so that they satisfy Proposition~\ref{prop:fukaya_trick}. 
 
At this point we would like to remind the reader Remark \ref{remanalytic}. With that in mind, in general, Proposition~\ref{prop:fukaya_trick} does not give any straightforward relationship between $SH^*_M(U_1,\Lambda)$ and $SH^*_M(U_s,\Lambda)$. This is because the map $\gamma\mapsto  T^{\Delta(\gamma)}\cdot \tilde\gamma$ and/or its inverse may not extend to the completions, as they might not be continuous, or equivalently bounded.

 We now assume that the neck is index bounded, i.e. for every integer $i$, the periods of the periodic orbits of $\Sigma$ of Conley-Zehnder index $i$ have a uniform upper and lower bound.

\begin{proof}[Proof of Proposition \ref{prop:index_bd_collar_invt}]
In our current notation, the statement is equivalent to
	$$
	SH^*_M(U_1,\Lambda)\cong SH^*_M(U_s,\Lambda). 
	$$

Note that there exists a constant $a>0$ such that for any non-constant $1$-periodic orbit $\gamma$, the quantity $\omega(C(\gamma,\tilde\gamma))$ is bounded by $a$ times the period of the Reeb orbit corresponding to $\gamma$ in $\Sigma$. 

Therefore, the index bounded condition implies that the map from Proposition~\ref{prop:fukaya_trick} and its inverse are in fact both continuous, and therefore they can be extended to the completions. Since this extension is a functorial operation, they are still chain maps and strict inverses of each other, which finishes the proof.
\end{proof}

\section{Product and unit via raised cohomology}
\label{sec:prod_unit}
\subsection{Raised cohomology and perturbation spaces}
\begin{defin}
	\label{dfn:raised_sh}
Let  $K\subset M$ be a compact subset of a symplectic manifold and $\epsilon>0$. Define the raised symplectic cohomology
$$SH^*_M(K,\epsilon)$$ 
analogously to $SH^*_M(K)$ using increasing families $H_i$ which are cofinal for $\{H:H|_{S^1\times K}<\epsilon\}$, equivalently,
$$
H_i(x)\xrightarrow[i\to+\infty]{}\begin{cases}
\epsilon,& x\in K,\\
+\infty,& x\notin K.
\end{cases}
$$
Moreover, we will always assume $H_i>0$ everywhere on $M$ for such cofinal families; note that this would not be possible with the usual definition of $SH^*_M(K)$.
\end{defin}

Let $\cM_{k;1}$ be the moduli space of genus~zero Riemann surfaces with $k$ input punctures and one output puncture, modulo automorphisms. We assume that they are equipped with compatible choices of cylindrical ends. We will only be interested in the cases $k=0,1,2$, when $\cM_{k;1}$ consists of a unique element, a curve $C\in \cM_{k;1}$.

Fix weight parameters 
$$\bw=(w^i>0)_{i=1,\ldots, k}\cup (w^0>0),$$ one for each input and output (so that in the case $k=0$ we only have $w^0$). The superscripts are used as indices here. Also fix
$$
\bH=(H^i)_{i=1,\ldots, k}\cup (H),
$$
a collection of  time-dependent everywhere positive Hamiltonians on $M$, one for each input and output.

Define 
\begin{equation}
\label{eq:S}
\cS_{k;1}(\bw,\bH)=\{
(\alpha, H)\}
\subset \Omega^1(C)\times  C^\infty(C\times M)\end{equation}
 to be the subset consisting of all $\alpha,H$ with the following properties:
 $$
\left\{
\begin{array}{l}
d\alpha\ge 0,\ H\ge 0,\\
d_{C\times M}(H\alpha)\mid_{C\times\{x\}}\ge 0, \text{ for every }x\in M\\
\alpha\equiv w^idt\text { on cylindrical ends,}\\
H\equiv H^i(t)\text { on cylindrical ends.}\\
\end{array}
\right.
$$

Given such a pair $(H,\alpha)$, we obtain a Hamiltonian vector field valued one form $X_H\otimes \alpha$ on $C$. This association factors through the canonical map $$\Omega^1(C)\times  C^\infty(C\times M)\to \Omega^1(C)\otimes_{C^\infty(C)}  C^\infty(C\times M).$$ The image of this map captures the data that is actually used in the Floer theoretic constructions. The first two requirements ensure that the topological energies of the solutions of the Floer equation $$(du-X_H\otimes \alpha)^{0,1}=0,$$ with respect to any domain dependent compatible almost complex structure, are non-negative. Let us demonstrate this.

Let $(C, j) \in \cM_{k;1}$, and $(\alpha, H)\in \cS_{k;1}(\bw,\bH)$ for some weights $\bw$ and time-dependent Hamiltonians $\bH$. Consider the trivial bundle $\pi:C \times M \to C$ with the connection $2$-form $$\Omega:=pr_M^*\omega+d_{C\times M}(H\alpha).$$ Recall the one to one correspondence between $C$-dependent almost complex structures $J_M$ on $M$ and almost complex structures $J^{tot}$ on $C \times M$ which are split for the horizontal subbundle defined by $\Omega$, and which make $\pi$ a $(J^{tot},j)$-holomorphic map. Under this correspondence, solutions $u$ of the Floer equation $(du-X_H\otimes \alpha)^{0,1}=0$ for $J_M$ are in turn in one to one correspondance with $(j,J^{tot})$-holomorphic sections $\tilde{u}=(u,id)$ of $\pi$. Moreover, we have the equality: $$E_{\text{top}}(u)=E_{\text{top}}(\tilde{u}),$$ by definition.

We now claim that our assumption $d_{C\times M}(H\alpha)\mid_{C\times\{x\}}\geq 0$ implies that $\Omega$ and $J^{tot}$ form a semi-tame pair for any semi-tame $J_M$, namely that $\Omega(w, J^{tot}w)\geq 0$, for any tangent vector $w$. If $w$ is a vertical vector than this is satisfied by the semi-tameness of $J_M$. If we decompose $w$ into its vertical and horizontal components $w=w_v+w_h$, by linearity, we obtain $$\Omega(w, J^{tot}w)=\Omega(w_v, J^{tot}w_v)+\Omega(w_v, J^{tot}w_h)+\Omega(w_h, J^{tot}w_v)+\Omega(w_h, J^{tot}w_h).$$Noting that the second and third terms are zero by the definition of the horizontal bundle and the splitness of $J^{tot}$, we reduce to showing the claim for when $w$ is a horizontal vector. Note that all horizontal vectors are lifts of tangent vectors from the base $C$.

A simple computation shows that for any $v\in T_pC$, its unique horizontal lift to $(p,x)$ is given by $$v+\theta(v)X_H(p,x)$$ via the splitting $T_{(p,x)}(C \times M) =T_pC \oplus T_xM .$ The desired result follows by plugging in all of these in the semi-tameness equation and using $d_{C\times X}(H\alpha)\mid_{C\times\{x\}}\geq 0$.

This implies that any $J^{tot}$-holomorphic curve inside $C\times M$ has non-negative geometric energy, which in turn can be shown to equal topological energy by the standard argument. The upshot is that if $u$ is a solution of the Floer equation, then $E_{\text{top}}(u)\geq 0$, as desired.
\\


It will be convenient to also refer to the underlying space of 1-forms:
\begin{equation}
\label{eq:Omega_space}
\Omega_{k;1}(\bw)=\{\alpha\in \Omega^1(C):\ d\alpha\ge 0 \text{ on }C,\ \alpha\equiv w^idt\text{ on cylindrical ends}\}.
\end{equation}
Clearly, there is a projection 
$$
\pi\co \cS_{k;1}(\bw,\bH)\to \Omega_{k;1}(\bw).
$$

We shall use the spaces $\cS_{k;1}(\bw,\bH)$ to define (raised) symplectic cohomology, products, and units. Table~\ref{tab:weights} may be helpful to keep track of the definitions.

\begin{table}[h]
	
\begin{tabular}{|c|c|c|}

	\multicolumn{3}{c}{Floer differential and continuation maps for Floer 1-rays,}
	\\
	\multicolumn{3}{c}{Extrinsic continuation maps $c_{\epsilon_1,\epsilon_2}$}
	\\	
	\hline
	$k=1$
	&
	$w^1=w^0=1$
	& $\cS_c(\bH)$
	\\
	\hline
	\end{tabular}
	\medskip

	\begin{tabular}{|c|c|c|}	
	\multicolumn{3}{c}{Another version of extrinsic continuation, $\tilde c_{\epsilon,2\epsilon}$}
	\\
	\hline
	$k=1$
	&
	$w^1=1$, $w^0=2$
	&
	$\cS_{\tilde c}(\bH)$
	\\
	\hline
		\end{tabular}
\medskip

	\begin{tabular}{|c|c|c|}	

	\multicolumn{3}{c}{Unit $1_\epsilon\in SH^*_M(K,\epsilon)$}
	\\
	\hline
	$k=0$
	&
	$w^0=1$
	&
	$\cS_u(\bH)$
	\\
	\hline
	\end{tabular}
\medskip

	\begin{tabular}{|c|c|c|}	
	
	\multicolumn{3}{c}{Product $*_\epsilon\co SH^*_M(K,\epsilon)^{\otimes 2}\to SH^*_M(K,2\epsilon)$}
	\\
	\hline
	$k=2$
	&
	$w^1=w^2=1$, $w^0=2$
	&
	$\cS_p(\bH)$
	\\
	\hline
\end{tabular}
	\medskip
	
\caption{Weights $\bw$ used in the definition of different operations on raised relative symplectic cohomology. The last column gives short names to the spaces $\cS_{k;1}(\bw,\bH)$ with the specified $\bw$.}
\label{tab:weights}
\end{table}

\begin{rem}\label{remgluing}
There are natural gluing operations on these perturbations spaces. Their existence is important to ensure the properties of the operations we are going to define using these perturbations.
For example, there are maps
$$
\#_\rho\co \cS_c\sqcup \cS_c\to \cS_c
$$
which glue perturbation data on the strips using gluing parameter $\rho\gg 0$ in two possible orders, whenever the Hamiltonians at the glued cylindrical ends match. 
As another example, there is a map
$$
\#_\rho\co \cS_u\sqcup \cS_p\to \cS_c,
$$
gluing the  unit datum to the product datum at, say, the second input, which will be used to show that  $-*_\epsilon 1_\epsilon$ is chain homotopic to $c_{\epsilon,2\epsilon}$. There are other natural gluing maps like this. Rather than writing them all down, we summarize the main point and do not mention it further: all gluing operations needed below can be performed staying within the spaces (\ref{eq:S}) of perturbation data, unless explicitly mentioned otherwise. A full treatment would have introduced families of choices parametrized by certain manifolds with corners where near the boundaries existence of ``gluing coordinates" is assumed. We would then frame contractibility as families defined on boundaries of these manifolds with corners admitting extensions to the interior. This is standard especially for our rudimentary needs with very explicit diagrams containing only homotopies of homotopies. We will omit further mention of this and only prove the main point, which is that our space of unbroken choices is contractible.
\end{rem}

The first  thing we need to understand is the conditions under which the spaces~(\ref{eq:S}) are non-empty and contractible. For two time-dependent Hamiltonians $H_1,H_2\co S^1\times X\to \bR$, let us write
$$
\begin{array}{c}
H_1\succeq H_2 \textit{ if }
\exists\textit{ a time-independent } H^\textit{ref}\co X\to\bR\textit{ s.~t.~}
\\
H_1(t,x)\ge H^\textit{ref}(x)\ge H_2(t,x) \textit{ for every }(t,x)\in S^1\times X.
\end{array}
$$

\begin{lemma}
	\label{lem:S_nonempty}
Assume that 
$$w^0\ge \sum_{i=1}^k w^i\quad\textit{and}\quad  H^0\succeq H^i\textit{ for each }i=1,\ldots, k.
$$
(When $k=0$, the conditions say $w^0\ge 0$, $H^0\ge 0$.)
Then $\Omega_{k;1}(\bw)$ is non-empty. Furthermore,
for each $\alpha\in \Omega^1_{k;1}(\bw)$, the space $\pi^{-1}(\alpha)\subset \cS_{k;1}(\bw,\bH)$ is non-empty.
\end{lemma}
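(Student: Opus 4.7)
The plan is to tackle the two assertions separately. The first uses Stokes' theorem to upgrade the weight inequality $w^0 \geq \sum w^i$ into the existence of a $1$-form, and the second uses the reference function coming from $H^0 \succeq H^i$ to build $H$ fiberwise.

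For the non-emptiness of $\Omega_{k;1}(\bw)$, I would start with any smooth $1$-form $\alpha_0$ on $C$ that agrees with $w^i dt$ on each cylindrical end, obtained by cutting off the model forms using a partition of unity. Then $d\alpha_0$ is supported in the compact core (the complement of the ends in $C$), and
$$\int_C d\alpha_0 = w^0 - \sum_{i=1}^k w^i \geq 0$$
by Stokes. Pick any non-negative $2$-form $\beta$ on $C$ supported in the core with $\int_C \beta = w^0 - \sum w^i$. The difference $d\alpha_0 - \beta$ is compactly supported with zero integral on the connected open surface $C$, hence equals $d\eta$ for some compactly supported $1$-form $\eta$. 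Setting $\alpha := \alpha_0 - \eta$ yields an element of $\Omega_{k;1}(\bw)$ with $\mathrm{supp}(d\alpha) = \mathrm{supp}(\beta)$ inside the core.

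For the second assertion, fix any $\alpha \in \Omega_{k;1}(\bw)$. By definition $\alpha = w^i dt$ on each cylindrical end, so $\mathrm{supp}(d\alpha)$ sits in the compact core and is automatically disjoint from the ends. Using $H^0 \succeq H^i$, pick a time-independent reference function $H^{\mathrm{ref}} \colon M \to \mathbb{R}$ with $H^i(t, x) \leq H^{\mathrm{ref}}(x) \leq H^0(t, x)$ for every input $i$ and every $t$; note $H^{\mathrm{ref}} \geq 0$, and take $H^{\mathrm{ref}} \equiv 0$ when $k = 0$. Pick monotone cutoffs $\chi_i$ on each end that are zero on the core side and equal to one near the puncture, and define $H$ equal to $(1 - \chi_i(s)) H^{\mathrm{ref}}(x) + \chi_i(s) H^i(t, x)$ on the $i$-th end and $H^{\mathrm{ref}}(x)$ on the core. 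This is smooth, non-negative, and matches $H^i$ on each end as required.

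The fiberwise condition $d_{C \times M}(H \alpha)|_{C \times \{x\}} \geq 0$ then splits by the disjointness noted above. On the core, $H = H^{\mathrm{ref}}(x)$ is independent of $z \in C$, so $d(H\alpha) = H^{\mathrm{ref}} \, d\alpha \geq 0$. On each end of weight $w$, $d\alpha = 0$ and $d(H\alpha) = w \, \partial_s H \, ds \wedge dt$; a direct computation gives $\partial_s H = \chi'(H^i - H^{\mathrm{ref}})$ on inputs and $\partial_s H = \chi'(H^0 - H^{\mathrm{ref}})$ on the output, and the sign conventions $\chi' \leq 0$ with $H^i - H^{\mathrm{ref}} \leq 0$ on inputs, $\chi' \geq 0$ with $H^0 - H^{\mathrm{ref}} \geq 0$ on the output, produce $\partial_s H \geq 0$ in both cases. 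The main point requiring care is not any single step but the interplay between these sign conventions together with the automatic disjointness built into the definition of $\Omega_{k;1}(\bw)$; once these are arranged, the verification is essentially a bookkeeping exercise.
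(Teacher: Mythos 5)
Your proposal is correct and takes essentially the same approach as the paper. For the non-emptiness of $\Omega_{k;1}(\bw)$ the paper simply cites Ritter's Lemma~16.1, which is proved by the same Stokes/compactly-supported de~Rham argument you spell out; for the fiber non-emptiness, your interpolation of $H$ from $H^{\mathrm{ref}}$ on the core to $H^i$ (resp.\ $H^0$) along the ends, checked by splitting the positivity condition $d_{C\times M}(H\alpha)|_{C\times\{x\}}\ge 0$ into the region where $d\alpha\ge 0$ and the ends where $\alpha$ is standard, is exactly the paper's monotone-homotopy construction.
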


\begin{figure}[h]
	\includegraphics[]{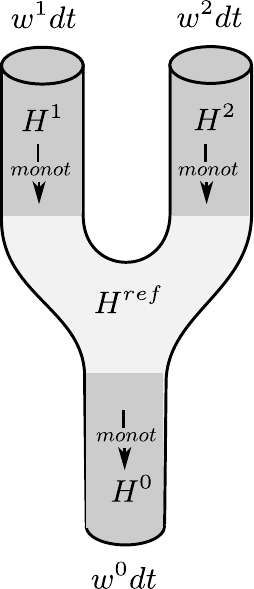}
	\caption{The Hamiltonian part of the Floer data from Lemma~\ref{lem:S_nonempty} on the pair-of-pants.}
	\label{fig:S_Nonempty}
\end{figure}

\begin{proof}
	It is well known that $\Omega_{k;1}(\bw)$ is non-empty: there exists a 1-form $\alpha$ satisfying $d\alpha\ge 0$ and having the desired behaviour at the cylindrical ends (see Lemma 16.1 of \cite{Ri13}). It remains to construct an $H$ such that $(\alpha,H)\in \cS_{k;1}(\bw,\bH)$; see Fig.~\ref{fig:S_Nonempty}. For this, consider a monotone homotopy from $H^i$ to $H^\textit{ref}$ on a cylindrical end near each input, and a monotone homotopy from $H^\textit{ref}$ to $H^0$ near the output. This defines $H$ on the union of three cylindrical ends. We assume that $\alpha$ is proportional to $dt$ on these cylindrical ends, so $d(H\otimes \alpha)\ge 0$ on them.  Outside of these regions, set $H\equiv H^\textit{ref}$. 
\end{proof}

\begin{lem}
	\label{lem:S_contr}
	Whenever $\cS_{k;1}(\bw,\bH)$ is non-empty, it is contractible.
\end{lem}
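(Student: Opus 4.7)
The plan is to exhibit an explicit two-stage deformation retraction onto a point. The naive convex combination $((1-t)\alpha_0+t\alpha_1,\,(1-t)H_0+tH_1)$ fails to stay in $\cS_{k;1}(\bw,\bH)$: the mixed term
\[
(1-t)t\bigl[d(H_0\alpha_1)+d(H_1\alpha_0)\bigr]
\]
appearing in the expansion of $d\bigl(((1-t)H_0+tH_1)((1-t)\alpha_0+t\alpha_1)\bigr)$ is not obviously non-negative. The key observation to get around this is that the condition $d_{C\times M}(H\alpha)|_{C\times\{x\}}\geq 0$ \emph{is} linear separately in $H$ (for fixed $\alpha$) and in $\alpha$ (for fixed $H$), so each single-variable slice of $\cS_{k;1}(\bw,\bH)$ is convex.

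First I would record the two slice convexity statements. For fixed $\alpha$, all the conditions cutting out $\pi^{-1}(\alpha)$ are preserved under convex combinations in $H$: the inequality $H\geq 0$ and the boundary matching $H\equiv H^i$ are manifestly so, and
\[
d\bigl(((1-t)H_0+tH_1)\alpha\bigr) = (1-t)\,d(H_0\alpha) + t\,d(H_1\alpha) \geq 0.
\]
Symmetrically, for fixed $H$ convex combinations in $\alpha$ preserve all conditions, using also that $d\alpha\geq 0$ is linear in $\alpha$.

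Next I would single out a canonical Hamiltonian $H_*$ lying over \emph{every} $\alpha\in\Omega_{k;1}(\bw)$, namely the one produced in the proof of Lemma~\ref{lem:S_nonempty}: set $H_*\equiv H^{\mathrm{ref}}$ on the interior of $C$, and on each cylindrical end let $H_*$ be a monotone homotopy from $H^i$ to $H^{\mathrm{ref}}$ at an input, respectively from $H^{\mathrm{ref}}$ to $H^0$ at the output. This choice depends only on $\bH$, not on $\alpha$. Since any $\alpha\in\Omega_{k;1}(\bw)$ equals $w^i\,dt$ on the cylindrical ends, we have $d(H_*\alpha)\geq 0$ there by monotonicity; in the interior $H_*$ is constant along $C$ and so $d(H_*\alpha)=H^{\mathrm{ref}}\,d\alpha\geq 0$. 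Thus $(\alpha,H_*)\in\cS_{k;1}(\bw,\bH)$ for all $\alpha\in\Omega_{k;1}(\bw)$. Pick also any $\alpha_*\in\Omega_{k;1}(\bw)$, which exists by Lemma~\ref{lem:S_nonempty}.

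Finally, I would define the retraction $R\co \cS_{k;1}(\bw,\bH)\times[0,1]\to\cS_{k;1}(\bw,\bH)$ in two stages: for $t\in[0,\tfrac12]$, set $R((\alpha,H),t)=(\alpha,\,(1-2t)H+2tH_*)$, which stays in $\cS_{k;1}(\bw,\bH)$ by fiber convexity combined with $(\alpha,H_*)\in\cS_{k;1}(\bw,\bH)$; and for $t\in[\tfrac12,1]$, set $R((\alpha,H),t)=((2-2t)\alpha+(2t-1)\alpha_*,\,H_*)$, which stays in $\cS_{k;1}(\bw,\bH)$ by convexity in $\alpha$ combined with $(\alpha_*,H_*)\in\cS_{k;1}(\bw,\bH)$. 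The two formulas agree at $t=\tfrac12$ giving $(\alpha,H_*)$, continuity in $(\alpha,H)$ is clear, and at $t=1$ the retraction ends at the single point $(\alpha_*,H_*)$, proving contractibility. The main obstacle is the uniform section $H_*$; once Lemma~\ref{lem:S_nonempty} hands it to us, the argument reduces to pure single-variable convexity applied twice in succession.
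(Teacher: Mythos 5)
Your proof is correct, and the underlying strategy is the same as the paper's: both work with the projection $\pi\co\cS_{k;1}(\bw,\bH)\to\Omega_{k;1}(\bw)$, the convexity of the base $\Omega_{k;1}(\bw)$, and the convexity of each fibre $\pi^{-1}(\alpha)$. Where you go further is in making the deduction rigorous. The paper simply asserts that contractible base plus contractible (nonempty) fibres ``implies'' contractibility of the total space; as stated, that is not a theorem about arbitrary continuous surjections, and one needs either a fibration property or, more directly, a global section together with compatible fibrewise retractions. Your observation that the reference Hamiltonian $H_*$ from Lemma~\ref{lem:S_nonempty} -- equal to $H^{\mathrm{ref}}$ on the interior of $C$ and given by monotone homotopies on the cylindrical ends, where \emph{all} $\alpha\in\Omega_{k;1}(\bw)$ agree with $w^i\,dt$ -- defines $(\alpha,H_*)\in\cS_{k;1}(\bw,\bH)$ for every $\alpha$ supplies exactly this section, and the two-stage retraction (first contract $H$ to $H_*$ using fibre convexity, then contract $\alpha$ to $\alpha_*$ using convexity in the $\alpha$-slice over $H_*$) produces the explicit contraction the paper leaves implicit. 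Your opening remark that $\cS_{k;1}(\bw,\bH)$ is \emph{not} obviously convex in its own right, because of the cross term $t(1-t)\bigl[d(H_0\alpha_1)+d(H_1\alpha_0)\bigr]$, is also a worthwhile clarification of why the argument has to be routed through the projection $\pi$ at all. In short: same idea, executed more completely.
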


\begin{proof}
Consider the projection 
$\pi\co \cS_{k;1}(\bw,\bH)\to \Omega_{k;1}(\bw)$.
The base space $\Omega_{k;1}(\bw)$ is convex (whenever it is non-empty), hence contractible. Assuming the non-emptiness, each fibre of the projection is non-empty by Lemma~\ref{lem:S_nonempty}. Next, each fibre is obviously convex, hence contractible.  It follows that $\cS_{k;1}(\bw,\bH)$ is contractible.
\end{proof}

\begin{rem}
	\label{lem:s_c_std}
The space $\cS_c$ contains the usual continuation map data coming from monotone homotopies: namely, $\alpha=dt$ and $H\ge 0$ is non-decreasing in $s$.
\end{rem}

%

\subsection{Extrinsic continuation}

We assume that the reader is familiar with the algebra from \cite{VaThesis}; also see Section~\ref{sec:alg}.

\begin{defin}
	\label{dfn:c_e1e2}
	Let $\epsilon_1<\epsilon_2$.
	Let $\{H_i^1\}$, $\{H_i^0\}$ be cofinal families
	computing $SH^*_M(K,\epsilon_1)$ resp.~$SH^*_M(K,\epsilon_2)$, chosen so that $H_i^0\succeq H_i^1$ for each $i$.
	Consider two 1-rays from the diagram below.
	$$
	\begin{tikzcd}
	\ldots\ar[r]&CF^*(H_{i-1}^1)\ar[r]\ar[d]\ar[dr]&CF^*(H_{i}^1)\ar[r]\ar[d]\ar[dr]&CF^*(H_{i+1}^1)\ar[r]\ar[d]&\ldots\\
	\ldots\ar[r]&CF^*(H_{i-1}^0)\ar[r]&CF^*(H_{i}^0)\ar[r]&CF^*(H_{i+1}^0)\ar[r]&\ldots
	\end{tikzcd}
	$$
	The top ray is a Floer 1-ray for $SH^*_M(K,\epsilon_1)$, and the bottom ray is a Floer 1-ray for $SH^*_M(K,\epsilon_2)$.
	
	Construct a morphism between these 1-rays (i.e.~the vertical and diagonal arrows in the above diagram) by counting Floer solutions on the cylinder with perturbation data chosen consistently from the spaces $\cS_c(\bH)$. We use $\bH=(H^1_i,H^0_i)$ for the vertical arrows, and $\bH=(H^1_i,H^0_{i+1})$ for the diagonal arrows. (Recall that we use 1-dimensional families of data for the latter, as usual \cite{VaThesis}.)
	
	The induced $\Lambda_{\ge 0}$ module map 
	$$
	c_{\epsilon_1,\epsilon_2}\co SH^*_M(K,\epsilon_1)\to SH^*_M(K,\epsilon_2),
	$$
	is called an extrinsic continuation map. 
\end{defin}


\begin{defin}
	\label{dfn:c_e}
	Let $\epsilon>0$.
	Let $\{H_i^1\}$, $\{H_i^0\}$ be cofinal families
	computing $SH^*_M(K)$ resp.~$SH^*_M(K,\epsilon)$, chosen so that $H_i^0\geq H_i^1$ (the standard order). Note that the $H_i^1$ are not positive in this case.
	The $\Lambda_{\ge 0}$ module map
	$$
	c_{0,\epsilon_1}\co SH^*_M(K)\to SH^*_M(K,\epsilon)
	$$
	is defined analogously to \cite{Va18}, using continuation maps from monotone homotopies. (The perturbation data no longer live in the spaces $\cS_c(\bH)$, because the Hamiltonians are not everywhere positive.)
\end{defin}

\begin{lemma}
	\label{lem:c_commut}
	The maps from Definition~\ref{dfn:c_e1e2} and~\ref{dfn:c_e} are well-defined.	
For $0\le\epsilon_0<\epsilon_1<\epsilon_2$ there is a commutative diagram
$$
 \begin{tikzcd}
 SH^*_M(K,\epsilon_0)\arrow[r, "c_{\epsilon_0,\epsilon_1}"]\arrow[rd, "c_{\epsilon_0,\epsilon_2}"']
 &SH^*_M(K,\epsilon_1)\ar[d, "c_{\epsilon_1,\epsilon_2}"]
 \\
 &SH^*_M(K,\epsilon_2),
\end{tikzcd}
$$
where, if $\epsilon_0=0$, we formally put $SH^*_M(K,0)\coloneqq SH^*_M(K)$.
\end{lemma}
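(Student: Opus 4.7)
The plan is to establish well-definedness of the continuation maps first, and then deduce commutativity from a gluing-and-interpolation argument inside the perturbation spaces $\cS_c(\bH)$.

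For well-definedness of $c_{\epsilon_1,\epsilon_2}$ with $0<\epsilon_1<\epsilon_2$: having fixed cofinal families $\{H_i^1\},\{H_i^0\}$ with $H_i^0\succeq H_i^1$ for each $i$, Lemma~\ref{lem:S_nonempty} guarantees that each relevant space $\cS_c(\bH)$ is nonempty (the weight condition $w^0=w^1=1$ is trivial, and the Hamiltonian condition is built into the choice of cofinal families). Compatible choices of perturbation data therefore assemble into a morphism of $1$-rays (the vertical and diagonal arrows). Independence of these choices is immediate from Lemma~\ref{lem:S_contr}: any two choices are joined by a path in the contractible space $\cS_c(\bH)$, and the standard parametric moduli argument produces a chain homotopy at each slot, which assemble into a homotopy of morphisms of $1$-rays. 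Independence of the cofinal families is similarly routine: two such families may be interleaved and connected through continuation data, again relying on contractibility of the appropriate $\cS_c$. The case of $c_{0,\epsilon}$ in Definition~\ref{dfn:c_e} is handled exactly as in~\cite{Va18} via classical monotone homotopy data, which does not require Hamiltonian positivity.

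For commutativity with $\epsilon_0>0$: fix cofinal families $\{H_i^{(j)}\}$ for $j=0,1,2$ with $H_i^{(2)}\succeq H_i^{(1)}\succeq H_i^{(0)}$, and choose perturbation data in $\cS_c$ realizing $c_{\epsilon_0,\epsilon_1}$ and $c_{\epsilon_1,\epsilon_2}$. By Remark~\ref{remgluing}, the gluing operation $\#_\rho\co \cS_c\sqcup\cS_c\to\cS_c$ produces data from $H_i^{(0)}$ directly to $H_i^{(2)}$, and the associated Floer count computes the composition $c_{\epsilon_1,\epsilon_2}\circ c_{\epsilon_0,\epsilon_1}$ up to chain homotopy via the standard neck-stretching argument. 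Any other data in $\cS_c$ with boundary $(H_i^{(0)},H_i^{(2)})$ realizes $c_{\epsilon_0,\epsilon_2}$, and contractibility of $\cS_c$ gives a chain homotopy between the two morphisms of $1$-rays; the diagonal arrows are treated the same way. Passing to $H^*(\widehat{tel}(-))\otimes\Lambda$ yields the commutative triangle.

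For the case $\epsilon_0=0$: here $c_{0,\epsilon_1}$ is defined using the larger class of monotone homotopy data (with Hamiltonians allowed to be negative), rather than $\cS_c(\bH)$, but the same strategy applies. Gluing a monotone homotopy from $H_i^{(0)}$ to $H_i^{(1)}$ with data in $\cS_c$ from $H_i^{(1)}$ to $H_i^{(2)}$ lands in the class of monotone homotopies used to define $c_{0,\epsilon_2}$, and the enlarged space of monotone $s$-dependent homotopies with prescribed boundaries is again convex, so the contractibility-based homotopy argument goes through unchanged.

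The main obstacle I anticipate is verifying that the gluing operation $\#_\rho$ actually produces perturbation data satisfying the semi-positivity condition $d_{C\times M}(H\alpha)|_{C\times\{x\}}\ge 0$ throughout the glued surface, and that the one-parameter interpolating families of data admit the transversality and compactness required for the chain-homotopy argument. These points are essentially the ones alluded to in Remark~\ref{remgluing}; no new phenomena arise in the Calabi--Yau setting by the transversality comments in Section~\ref{ssremindersh}, but the positivity bookkeeping at the gluing neck must be checked case by case.
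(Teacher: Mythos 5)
The paper omits a proof of Lemma~\ref{lem:c_commut} altogether, treating it as a standard Floer-theoretic fact, so there is no proof-of-record to compare against. Your proposal supplies exactly the expected argument: well-definedness via non-emptiness (Lemma~\ref{lem:S_nonempty}) and contractibility (Lemma~\ref{lem:S_contr}) of the perturbation spaces, and commutativity via gluing and contractibility. The structure, the invocation of Remark~\ref{remgluing}, and the identification of the bookkeeping at the gluing neck as the one point requiring care are all as the authors intend.

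One imprecision worth fixing in the $\epsilon_0=0$ case: as stated, gluing a monotone homotopy (with $\alpha=dt$, $H$ possibly negative) to generic data in $\cS_c$ (which may have $d\alpha>0$) does not automatically land in the class of monotone $s$-dependent homotopies used to define $c_{0,\epsilon_2}$. You should first use contractibility of $\cS_c$ and Remark~\ref{lem:s_c_std} to arrange that the $\cS_c$-data for $c_{\epsilon_1,\epsilon_2}$ is of the form $\alpha=dt$ with $\partial_s H\ge 0$; only then does the glued datum lie in the convex space of monotone homotopies, where the interpolation argument runs. This is a small repair, not a conceptual gap.
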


\begin{lemma}
	\label{lem:c_iso}
One has 
$$c_{0,\epsilon}=T^\epsilon\cdot f,$$ where $f$ is an isomorphism of $\Lambda_{\ge 0}$-modules.	
\end{lemma}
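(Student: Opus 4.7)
The plan is to exploit that adding a constant to a Hamiltonian leaves its vector field, periodic orbits, and Floer moduli spaces unchanged, while shifting topological energies in a controlled way. I would first choose a cofinal family $\{H_i^1\}$ for $SH^*_M(K)$ with the additional property that $H_i^1 > -\epsilon$ everywhere on $M$ (easy to arrange by approximating a function equal to $-\epsilon/2$ on $K$ and $+\infty$ off $K$, then perturbing to non-degeneracy), and set $H_i^0 := H_i^1 + \epsilon$. Since $H_i^0 > 0$ everywhere and $\{H_i^0\}$ is cofinal among Hamiltonians with $H|_K < \epsilon$, it is an admissible cofinal family for $SH^*_M(K, \epsilon)$. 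I would choose monotone homotopies for $\{H_i^1\}$ and use their $\epsilon$-shifts as monotone homotopies for $\{H_i^0\}$. Because constants cancel in the topological energy formula~(\ref{eq:E_top}) whenever $H_{out}-H_{in}$ is unchanged, the Floer $1$-rays $\cC^1$ and $\cC^0$ are literally identical as objects of $1\text{-}ray$, and the identity identification descends to a $\Lambda_{\geq 0}$-module isomorphism $f: SH^*_M(K) \to SH^*_M(K, \epsilon)$ on completed telescopes.

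Next, I would compute $c_{0,\epsilon}$ using the linear homotopies $H_i^1 + s\epsilon$, $s \in [0,1]$, between $H_i^1$ and $H_i^0$. Because the vector field is independent of $s$, the continuation Floer equation reduces to the $s$-translation-invariant Floer equation of $H_i^1$, whose only rigid solutions are the constant cylinders at the $1$-periodic orbits. For such a cylinder at an orbit $\gamma$, $\omega(u)=0$, and~(\ref{eq:E_top}) gives $E_{top}(u) = \int_{S^1}\gamma^*(H_i^1+\epsilon)\,dt - \int_{S^1}\gamma^*H_i^1\,dt = \epsilon$. Hence the vertical arrow $CF^*(H_i^1) \to CF^*(H_i^0)$ of the continuation morphism equals $T^\epsilon \cdot \id$ under the identification $CF^*(H_i^1)=CF^*(H_i^0)$. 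Because the horizontal maps of $\cC^1$ and $\cC^0$ coincide, the continuation squares commute strictly, the diagonal homotopies can be taken to be zero, and the resulting strict morphism of $1$-rays is $T^\epsilon \cdot \id$. Passing to the completed telescope yields $c_{0,\epsilon} = T^\epsilon \cdot f$.

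The lemma then follows from the (standard) independence of $c_{0,\epsilon}$ from the choice of cofinal families and monotone homotopies, proved along the same lines as the analogous independence for $SH^*_M$ itself. I do not expect any real obstacle; the only subtlety is producing a non-degenerate cofinal family $\{H_i^1\}$ satisfying $H_i^1 > -\epsilon$ everywhere, which is routine.
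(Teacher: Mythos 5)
Your overall strategy coincides with the paper's: take $H_i^0 = H_i^1 + \epsilon$, observe that constant shifts leave the Hamiltonian vector field and hence all Floer data unchanged, identify the two $1$-rays generator-wise, and conclude the vertical continuation maps are $T^\epsilon\cdot\mathrm{Id}$. (Your explicit requirement $H_i^1 > -\epsilon$ to ensure positivity of $H_i^0$ is correct and only implicit in the paper.)

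There is, however, a genuine gap in the sentence ``Because the horizontal maps of $\cC^1$ and $\cC^0$ coincide, the continuation squares commute strictly, the diagonal homotopies can be taken to be zero.'' In the Floer-theoretic construction of a morphism of $1$-rays, the diagonal maps $h_i\colon CF^*(H_i^1)\to CF^*(H_{i+1}^0)[1]$ are not chosen freely: they are \emph{counts} of rigid Floer solutions for a one-parameter family of Floer data interpolating between the two glued broken configurations. The commutativity of the square on the nose tells you that the difference of the two compositions is zero, so $h_i$ must be a degree~$-1$ chain map (self-null-homotopy), but it does not tell you that the geometric count itself is zero. The paper resolves this by carefully constructing a particular interpolating family in which every Hamiltonian is of the form $H_{i,i+1}^c(s+\Delta,t)+\rho(s-\Delta)$: since the equations obtained for different $\Delta$ differ only by translation in the $s$-coordinate, the parametrized moduli space has an extra $\bR$-symmetry and therefore contains no rigid solutions, so the count is indeed zero. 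Without an argument of this kind (or an alternative argument showing the Floer-theoretic diagonal homotopies are \emph{equivalent} to the zero ones in the sense of the algebra of $1$-rays), the identification of the morphism of $1$-rays with the strict morphism $T^\epsilon\cdot\mathrm{Id}$ is not justified. This is precisely the technical core of the paper's proof, and it is the part your write-up elides.
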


\begin{proof}
Let $\{H^1_i\}$ be a cofinal family for $SH^*_M(K)$, then $\{H^0_i\coloneqq H^1_i+\epsilon\}$ is a cofinal family for $SH^*_M(K,\epsilon)$. For each $i$, choose a monotone homotopy from $H^1_i$ to $H^0_i$ which has the following form:
$$
H_i^c(s,t)=H^1_i(t)+\rho(s)
$$
where $\rho(s)\co [0,1] \to [0,\epsilon]$ is a monotone function such that $\rho(s)= 0$ in a neighborhood of $0$ and $\rho(s)= \epsilon$ in a neighborhood of $1.$ Note that the addition of $\rho(s)$ does not change the Hamiltonian vector field term in the corresponding Floer equation. All rigid solutions to this continuation map are constant, but contribute with topological energy $T^\epsilon$.

Let $H_{i,i+1}^c(s,t)$ be a monotone homotopy from $H_i^1$ to $H_{i+1}^1$, and $c_{i,i+1}$ the induced continuation map between the Floer complexes. We choose the homotopy from
$H_i^0$ to $H_{i+1}^0$ (for the Floer 1-ray) to be $$H_{i,i+1}^c(s,t)+\epsilon.$$ Floer solutions for the these homotopies are precisely the same as for the unmodified homotopy $H_{i,i+1}^0(s,t)$. Abusing notation, we denote these continuation maps by  $c_{i,i+1}$ as well.

The diagram from Definition~\ref{dfn:c_e1e2} becomes the following.
$$
\begin{tikzcd}
\ldots \ar[r]&CF^*(H_{i}^1)\ar[r, "c_{i,i+1}"]\ar[d, "T^\epsilon\cdot \mathrm{Id}"']\ar[dr, "h_i"]&CF^*(H_{i+1}^1)\ar[r]\ar[d, "T^\epsilon\cdot \mathrm{Id}"]&\ldots\\
\ldots
\ar[r]&CF^*(H_{i}^0)\ar[r, "c_{i,i+1}"]&CF^*(H_{i+1}^0)\ar[r]&\ldots
\end{tikzcd}
$$
We can arrange for the homotopies $h_i$ to be identically zero here. In the general setup these would be defined by extending the given family of Hamiltonians on the boundary of a square to the entire square (as in Proposition 3.2.13 of \cite{Va18}) and then turning that into an interval family of Floer equations using a suitable Morse function on the square (e.g. Equation (3.2.0.5) of \cite{Va18}). For this application it is more convenient to directly go to the type of data that is used in the Floer equation (this will be done in the rest of this chapter as well, recall Remark \ref{remgluing}). To define the one parameter data interpolating between the two broken data, we first do gluing near the ends of the interval and note that the regions corresponding to the new supports of $\rho'$ and $\frac{d}{ds}H^c_{i+1}$ are still disjoint. Then to connect the two sides we simply move these supports off of each other by translating them in the (infinite) $s$ direction. When the supports intersect we are simply adding the two contributions. All Hamiltonians in the interior of this one parameter family are of the form: $$
H_{i,i+1}^c(s+\Delta,t)+\rho(s-\Delta),
$$ for some real number $\Delta$. The key point again is that changing Hamiltonians involved in the Floer equations by functions only depending on $s$ and $t$ do not change the equations. Using that $c_{i,i+1}$'s were defined by regular Floer data, we obtain that this parametrized family is also regular (transversality is clearly satisfied). But, notice that solutions of different members of this family of equations differ by translations in the $s$-direction. Hence, there is no rigid solution in this parametrized problem, proving the claim.

The upshot is that this map of 1-rays induces $T^\epsilon\cdot \mathrm{Id}$ at the level of relative symplectic cohomology, using the obvious generator-wise identification of all Floer complexes and hence the telescopes.
\end{proof}	

\begin{rem}
	It also holds that $c_{\epsilon_1,\epsilon_2}$ equals $T^{\epsilon_2-\epsilon_1}$ times an isomorphism. This follows from Lemmas~\ref{lem:c_commut} and~\ref{lem:c_iso}; alternatively, the proof of Lemma~\ref{lem:c_iso} adapts.
\end{rem}

\subsection{Variant extrinsic continuation}

We will now define an auxuliary map, $\tilde c_{\epsilon,2\epsilon}$, and show that it is chain homotopic to $c_{\epsilon,2\epsilon}$. We will later use $\tilde c_{\epsilon,2\epsilon}$ to prove the  unitality property.

\begin{defin}
	\label{dfn:c_tilde}
	Let $\epsilon>0$.
	Let $\{H_i^1\}$, $\{H_i^0\}$ be two different cofinal families
	computing $SH^*_M(K,\epsilon)$, chosen so that $H_i^0\succeq H_i^1$. Consider the 1-rays from the diagram below.
	$$
\begin{tikzcd}
\ldots\ar[r]&CF^*(H_{i-1}^1)\ar[r]\ar[d]\ar[dr]&CF^*(H_{i}^1)\ar[r]\ar[d]\ar[dr]&CF^*(H_{i+1}^1)\ar[r]\ar[d]&\ldots\\
\ldots\ar[r]&CF^*(2H_{i-1}^0)\ar[r]&CF^*(2H_{i}^0)\ar[r]&CF^*(2H_{i+1}^0)\ar[r]&\ldots
\end{tikzcd}
$$
The top ray is a Floer 1-ray for $SH^*(K,\epsilon)$, and the bottom is one for $SH^*(K,2\epsilon)$.
For the curves defining the bottom ray, we can assume that they solve Floer's equation 
using $\alpha=2dt$ as the 1-form and $H_i^0$ as the Hamiltonians, rather than $\alpha=dt$ and $2H_i^0$ as the Hamiltonians. This is tautological since $(2dt)\otimes H=dt\otimes (2H)$.

Construct a morphism between these 1-rays (i.e.~the vertical and diagonal arrows) by counting Floer solutions on the cylinder with perturbation data chosen consistently from the spaces $\cS_{\tilde c}(\bH)$ (Recall Table \ref{tab:weights}). That is, we use $\bH=(H^1_i,H^0_i)$ for the vertical arrows, and $\bH=(H^1_i,H^0_{i+1})$ for the diagonal arrows.

The induced $\Lambda_{\ge 0}$ module map 
	$$
	\tilde c_{\epsilon,2\epsilon}\co SH^*_M(K,\epsilon)\to SH^*_M(K,2\epsilon),
	$$
is called variant continuation map.
\end{defin}

\begin{lemma}
	\label{lem:c_versions}
	The maps $c_{\epsilon,2\epsilon}$ and $\tilde c_{\epsilon,2\epsilon}$ agree at the cohomology level.
\end{lemma}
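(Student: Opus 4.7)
The plan is to interpolate between the perturbation data defining $c_{\epsilon,2\epsilon}$ and $\tilde c_{\epsilon,2\epsilon}$ via a one-parameter family of cylindrical-end weights and Hamiltonians on the morphism cylinders, and extract the chain homotopy between the two maps of $1$-rays in the standard way. First I would choose cofinal families compatibly so that both target $1$-rays are literally identical. Pick $\{H^1_i\}$ cofinal for $SH^*_M(K,\epsilon)$ and $\{\hat H^0_i\}$ cofinal for $SH^*_M(K,\epsilon)$ with $\hat H^0_i\succeq H^1_i$, and use $H^0_i:=2\hat H^0_i$ (which is cofinal for $SH^*_M(K,2\epsilon)$) in Definition~\ref{dfn:c_e1e2}. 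Then the bottom $1$-ray in Definition~\ref{dfn:c_e1e2} (weight $1$, Hamiltonians $H^0_i$) coincides tautologically with the bottom $1$-ray in Definition~\ref{dfn:c_tilde} (weight $2$, Hamiltonians $\hat H^0_i$), since $(dt)\otimes H^0_i = (2dt)\otimes \hat H^0_i$.

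For each morphism cylinder (vertical or diagonal in the $1$-ray diagram), consider the $s\in[0,1]$ family with weights $\bw_s=(1,\,1+s)$ and end Hamiltonian data $\bH_s=\bigl(H^1_\bullet,\,H^0_\bullet/(1+s)\bigr)$. The crucial observation is that the \emph{effective} output Hamiltonian $(1+s)\cdot H^0_\bullet/(1+s)=H^0_\bullet$ is independent of $s$, so the output periodic orbits (and hence the underlying target Floer complexes) are fixed throughout the homotopy. At $s=0$ we recover the $\cS_c$ data of $c_{\epsilon,2\epsilon}$, and at $s=1$ the $\cS_{\tilde c}$ data of $\tilde c_{\epsilon,2\epsilon}$. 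The hypotheses of Lemma~\ref{lem:S_nonempty} are met for every $s$: $w^0_s\geq 1=w^1_s$, and
$$
H^0_\bullet/(1+s) \;=\; \tfrac{2}{1+s}\hat H^0_\bullet \;\succeq\; \hat H^0_\bullet \;\succeq\; H^1_\bullet,
$$
so by Lemma~\ref{lem:S_contr} each slice $\cS_{1;1}(\bw_s,\bH_s)$ is nonempty and contractible. Contractibility allows one to choose a coherent $s$-dependent perturbation datum on the full $[0,1]$-parametrized family of morphism cylinders, gluing-compatible with the Floer data already fixed on the source and target $1$-rays as outlined in Remark~\ref{remgluing}.

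Counting rigid elements of the resulting one-dimensional parametrized moduli spaces (transversality being handled as in Section~\ref{ssremindersh} in the Calabi--Yau setting) produces a homotopy between the morphisms of $1$-rays realizing $c_{\epsilon,2\epsilon}$ and $\tilde c_{\epsilon,2\epsilon}$. By the functoriality of $\widehat{tel}$ on equivalence classes of morphisms recalled in Section~\ref{sec:alg}, equivalent maps of $1$-rays induce equal maps on $H^*(\widehat{tel})\otimes\Lambda$, which is exactly the claim of the lemma. The main bookkeeping hurdle is organizing the $s$-dependent data coherently across both the vertical morphism cylinders and the diagonal ones (the latter already carrying a one-parameter Morse-function direction), with the correct matching at the broken-configuration strata. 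As indicated in Remark~\ref{remgluing}, this is entirely standard once the slice-wise perturbation spaces are known to be contractible, and no new analytic input is required beyond what Section~\ref{sec:prod_unit} has already established.
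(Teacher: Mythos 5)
Your proposal is correct in spirit and hinges on the same tautology $(2dt)\otimes H = dt\otimes(2H)$ as the paper, but the route is genuinely different, and the paper's is cleaner. You interpolate via a one-parameter family of weights and Hamiltonians and then argue for a chain homotopy by a parametrized moduli space count, appealing to contractibility of each slice $\cS_{1;1}(\bw_s,\bH_s)$ plus coherence of the $s$-dependent choice. The paper instead observes that the Floer equation only sees the data through the canonical map $q\co \Omega^1(C)\times C^\infty(C\times M)\to \Omega^1(C)\otimes_{C^\infty(C)}C^\infty(C\times M)$, i.e.\ through $\alpha\otimes H$, and that one can pick perturbation data for $c_{\epsilon,2\epsilon}$ and $\tilde c_{\epsilon,2\epsilon}$ lying in $\cS_c$ and $\cS_{\tilde c}$ respectively with \emph{identical} $q$-image. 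Concretely, it fixes $\rho\co\bR\to[1,2]$ and a $G$ with $(dt,G)\in\cS_{1;1}((1,1),(H^1,H^0))$, and uses $(dt,\rho G)$ for $\cS_c$ and $(\rho\,dt,G)$ for $\cS_{\tilde c}$; these have the same $q$-image, so they cut out literally the same Floer solutions. The two morphisms of $1$-rays can therefore be chosen to coincide on the nose (including the diagonal homotopies, arranged by shifting supports within the common space $\cC(H_i,H_{i+1})$), and no parametrized moduli count or homotopy-of-$1$-ray-morphisms is needed at all. Your version works, but it pays an extra cost: you must verify that a global section of the family $s\mapsto \cS_{1;1}(\bw_s,\bH_s)$ exists and can be chosen gluing-compatibly across the whole $1$-ray, and you must invoke parametrized transversality. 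In fact, a careful choice along your interpolation --- e.g.\ $(\rho^a\,dt,\ \rho^{1-a}G)$ with $a=\log_2(1+s)$ --- has constant $q$-image $\rho\,dt\otimes G$, so the parametrized moduli space is a product with $[0,1]$, there are no rigid elements, and the ``homotopy'' is actually zero. This degeneration is precisely the paper's observation; your write-up does not notice it, and so builds a more elaborate argument than necessary.
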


\begin{proof} We start with two arbitrary Hamiltonians $H^1,H^0$ such that $H^0\succeq H^1>0$ and consider the spaces
	$$
	\cS_{\tilde c}= \cS_{1;1}(\bw,\bH),\quad \bw=(1,2),\quad \bH=(H^1,H^0)
	$$
	used in the definition of $\tilde c_{\epsilon,2\epsilon}$
	and
	$$
	\cS_c\coloneqq \cS_{1;1}(\bw,\bH),\quad \bw=(1,1),\quad \bH=(H^1,2H^0)
	$$
	used in the definition of $c_{\epsilon,2\epsilon}$.

		Let us fix a function $$
	\begin{array}{l}	\rho(s)\co \bR\to[1,2],\quad \partial_s\rho\ge 0,\\
	
	\rho(s)\equiv 1 \textit{ for }s\ll 0,\quad \rho(s)\equiv 2 \textit{ for }s\gg 0. 
\end{array}$$We define the following space
	$$
	\cC(H^1,H^0)=
	\{\rho(s-\Delta)\cdot dt\otimes H(s-\Delta)\}\subset\Omega^1(C)\otimes_{C^\infty(C)}  C^{\infty}(C\times M,\mathbb{R}).
	$$
	where:
	$$
	\begin{array}{l}
	\Delta\in\mathbb{R},\\
	H(s)\co M\times S^1\times \bR\to \bR,\quad  \partial_sH\ge 0,\\
	
	H(s)\equiv H^1 \textit{ for }s\ll 0,\quad H(s)\equiv H^0 \textit{ for }s\gg 0.
\end{array}
$$ 

Also consider the canonical map $q: \Omega^1(C)\times  C^\infty(C\times M)\to  \Omega^1(C)\otimes_{C^\infty(C)}  C^\infty(C\times M),$
$$(\alpha,H)\mapsto \alpha\otimes H.$$ Note that
$$
\cC(H^1,H^0)\subset q(\cS_c)\cap q(\cS_{\tilde{c}}),
$$ since one can write $(\rho(s-\Delta)\cdot dt)\otimes H(s-\Delta)=dt \otimes (\rho(s-\Delta)\cdot H(s-\Delta))$. 

Finally, let us note that if we have an element $(\beta, G)\in \cS_{1;1}((1,1),(H^1,H^0))$, then $(\beta, \rho G)\in \cS_{c}$ and $(\rho\beta, G)\in \cS_{\tilde{c}}$. Moreover, $q((\beta, \rho G))=q((\rho\beta, G))$.

Now let us go back our specific goal. We fix the same cofinal families $\{H_i\}$ and $\{H_i\}$ for the set-up described in Definition \ref{dfn:c_tilde} (and also monotone homotopies as in the definition of acceleration data). Then, we want to construct $c_{\epsilon,2\epsilon}$ and $\tilde c_{\epsilon,2\epsilon}$ as in Definition \ref{dfn:c_e1e2} (with $(\epsilon_1,\epsilon_2) =(\epsilon, 2\epsilon)$ and cofinal families $\{H_i\}$ and $\{2H_i\}$) and Definition \ref{dfn:c_tilde}, respectively, in such a way that after applying the map $q$, the data that is used to define them are the same. This would finish the proof.

Let $(dt, G_i(s,t):=H_i(t))\in (\cS_{1;1}((1,1),(H_i,H_i))$ and set up the vertical continuation maps using $(dt, \rho G_i)$ and $(\rho dt, G_i).$ Next, we need to define the homotopies. As usual, first glue near the boundaries of the interval. Observe that the $q$-projections of both perturbation data we obtain live in $\cC(H_i,H_{i+1})$. Remembering the particular form of these data, we can connect the two end points inside $\cC(H_i,H_{i+1})$ by shifting the supports. It is easy to see that the paths we obtained can also be lifted to the relevant $\cS_{c}$ and $\cS_{\tilde{c}}$ spaces.

\end{proof}

\subsection{Product and unit on raised cohomology}

\begin{defin}
	\label{dfn:prod_e}
	Let $\epsilon>0$.
	Let $\{H_i^1\}$, $\{H_i^2\}$, $\{H_i^0\}$ be three different cofinal families
	computing $SH^*_M(K,\epsilon)$, chosen so that $H_i^0\succeq H_i^1,H^2_i$
	for all $i$.
	They give rise to three Floer 1-rays.  Consider two 1-rays from the diagram below.
$$
\begin{tikzcd}
\ldots \ar[r]\ar[dr]&CF^*(H_{i-1}^2)\otimes CF^*(H_{i-1}^1)\ar[r]\ar[d]\ar[dr]&CF^*(H_{i}^2)\otimes CF^*(H_{i}^1)\ar[r]\ar[d]\ar[dr]& \ldots\\
\ldots \ar[r]& CF^*(2H_{i-1}^0)\ar[r]&CF^*(2H_{i}^0)\ar[r]& \ldots
\end{tikzcd}
$$
The bottom ray is the  Floer  1-ray corresponding to $\{2H_i^0\}$.	The top ray is the slice-wise tensor product of the Floer 1-rays corresponding to $\{H_i^1\}$, $\{H_i^2\}$.

Construct  a morphism between these rays  by counting  Floer solutions on pairs-of-pants where the perturbation data are consistently chosen from 
$\cS_{p}(\bH)$. As in Definition~\ref{dfn:c_tilde}, we  use the fact that $(2dt)\otimes H=dt\otimes (2H)$ to treat the  output orbits of the pairs-of-pants as elements of $CF^*(2H_i^0)$.

Using this morphism, induce a $\Lambda_{\ge 0}$ module map called the product:
	$$
	*_{\epsilon}\co SH^*_M(K,\epsilon)\otimes SH^*_M(K,\epsilon) \to SH^*_M(K,2 \epsilon)
	$$
	as explained in Subsection~\ref{subsec:tensor}.
\end{defin}

\begin{rem}In this paper, we do not bother with proving that this product is independent of the choices, as we have no use for it. 
\end{rem}

\begin{defin}
	\label{dfn:unit_e}
	Let $\epsilon>0$, and $\{H_i^2\}$, $H_i^2\ge 0$, be a cofinal family
	computing $SH^*_M(K,\epsilon)$.
	
	Consider the 1-ray $U$ from Subsection~\ref{subsec:algebra_units} and the Floer 1-ray for $\{H_i^2\}$. They are the rows of the  diagram below. 
	$$
	\begin{tikzcd}
	\ldots\ar[r]&\Lambda_{\ge 0}\ar[r]\ar[d]\ar[dr]&\Lambda_{\ge 0}\ar[r]\ar[d]\ar[dr]&\Lambda_{\ge 0}\ar[r]\ar[d]&\ldots\\
	\ldots\ar[r]&CF^*(H_{i-1}^2)\ar[r]&CF^*(H_{i}^2)\ar[r]&CF^*(H_{i+1}^2)\ar[r]&\ldots
	\end{tikzcd}
	$$
	Construct  a morphism between these rays by counting  Floer solutions over $\bC P^1\setminus\{z_0\}$ where the perturbation data are consistently chosen from 
	$\cS_{u}(\bH)$. 
	
	Using this morphism, define
	the elements
	$$
	1_{K,\epsilon}\in SH^*_M(K,\epsilon)
	$$
	as in Subsection~\ref{subsec:algebra_units}. 
\end{defin}

We do need this element to be well-defined.

\begin{rem}
	\label{rem:unit_hard}Note that we need to define a product on relative symplectic homology using structures at the chain level as it is not in general not determined by homology level Hamiltonian Floer theoretic data. This is of course because of the completion step.
It is possible to define the Floer product directly on the usual relative symplectic cohomology: $SH^*_M(K)\otimes SH^*_M(K) \to SH^*_M(K)$. But, for example we have that $SH^*_M(M)=H(M,\Lambda_{\geq 0})\otimes \Lambda_{> 0}$ (Section 3.3.3 of \cite{VaThesis}). This means that the unit can only be expected to exist after tensoring with $\Lambda$. Our solution to resolve this was to introduce raised cohomology and first define the unit as an element there, which let us keep working over the Novikov ring up to this point.

\end{rem}

\begin{lemma}
	\label{lem:prod_and_cont}
	The  elements $1_{K,\epsilon}\in SH^*_M(K,\epsilon)$ do not depend on the choices. Moreover, they are compatible with the maps $c_{\epsilon_1,\epsilon_2}$ for $\epsilon_1<\epsilon_2$, i.e.
	$$
	1_{K,\epsilon_2}=c_{\epsilon_1,\epsilon_2}(1_{K,\epsilon_1}).
	$$
	
\end{lemma}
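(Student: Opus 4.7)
My plan is to combine the contractibility of the perturbation spaces $\cS_u(\bH)$ (Lemma~\ref{lem:S_contr}) with the functoriality of the telescope construction, running arguments in the spirit of Section~\ref{subsec:algebra_units}. The basic observation is that two morphisms of $1$-rays $U \to \mathcal{C}'$ that are equivalent in the sense of Section~\ref{sec:alg} induce the same map on $H^*(\widehat{tel})$, hence the same image of $1 \in H^0(\widehat{tel}(U)) = \Lambda$. So everything reduces to showing that the relevant morphisms of $1$-rays are equivalent.

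For independence from choices, fix a cofinal family $\{H_i^2\}$ of everywhere-positive Hamiltonians cofinal for $SH^*_M(K,\epsilon)$. Two choices of perturbation data $(\alpha_i, H_i), (\alpha_i', H_i') \in \cS_u(\bH_i)$ determine two morphisms $U_{\mathcal{C}'}, U_{\mathcal{C}'}^* : U \to \mathcal{C}'$. By Lemma~\ref{lem:S_contr}, the space $\cS_u(\bH_i)$ is contractible whenever non-empty, so the two data can be joined by a path; the associated parametrized moduli spaces produce a chain homotopy rendering $U_{\mathcal{C}'}$ and $U_{\mathcal{C}'}^*$ equivalent in $1-ray$, hence by Lemma~\ref{lemstrictunit} they give rise to the same element in cohomology. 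To handle two different cofinal families $\{H_i^2\}, \{\widetilde H_i^2\}$, interpose a third family dominating both, and use extrinsic continuation to pass from the unit classes for the smaller families to the unit class for the larger one, again reducing to contractibility of the corresponding perturbation spaces.

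For the compatibility $1_{K,\epsilon_2} = c_{\epsilon_1,\epsilon_2}(1_{K,\epsilon_1})$, I would compare two morphisms of $1$-rays $U \to \mathcal{C}^{(2)}$ to a Floer $1$-ray computing $SH^*_M(K,\epsilon_2)$. The first is the composition $\varphi \circ U_{\mathcal{C}^{(1)}}$, where $U_{\mathcal{C}^{(1)}}: U \to \mathcal{C}^{(1)}$ defines $1_{K,\epsilon_1}$ using data from $\cS_u$, and $\varphi: \mathcal{C}^{(1)} \to \mathcal{C}^{(2)}$ defines $c_{\epsilon_1,\epsilon_2}$ using data from $\cS_c$ (Definition~\ref{dfn:c_e1e2}). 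The second is a morphism $U_{\mathcal{C}^{(2)}}: U \to \mathcal{C}^{(2)}$ defining $1_{K,\epsilon_2}$ directly from $\cS_u$-data. At the chain level, $\varphi \circ U_{\mathcal{C}^{(1)}}$ is computed by breaking-and-gluing, using the gluing operation $\#_\rho: \cS_u \sqcup \cS_c \to \cS_u$ from Remark~\ref{remgluing}: one attaches the output of the sphere-with-one-puncture to the input of the cylinder to obtain another sphere-with-one-puncture whose output Hamiltonian is cofinal for $\epsilon_2$. Thus the composition is itself a morphism of the form used to define $1_{K,\epsilon_2}$, for a specific broken choice of $\cS_u$-data. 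A one-parameter family connecting this broken configuration with large gluing parameter to an unbroken $\cS_u$-datum then exists by contractibility of $\cS_u$, and the standard parametric argument identifies the two morphisms as equivalent in $1-ray$. Combined with independence from the previous paragraph, this yields $\varphi \circ U_{\mathcal{C}^{(1)}}$ and $U_{\mathcal{C}^{(2)}}$ inducing the same element, namely $1_{K,\epsilon_2}$.

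The main obstacle is bookkeeping: one must verify that all gluings and interpolations actually stay within the positive perturbation spaces $\cS_u$ and $\cS_c$ with the prescribed weights and boundary Hamiltonians, and that the continuation data between cofinal families can be chosen compatibly across all $i$. These points are delicate because the output weight of $\cS_u$ is $1$ while that of the continuation-unit-glued configuration must still read $1$, which it does thanks to $1 = 1$ on the cylindrical side; the positivity and monotonicity conditions $d\alpha \geq 0$, $H \geq 0$, $d_{C \times M}(H\alpha)|_{C\times\{x\}} \geq 0$ are preserved under gluing since each individual datum satisfies them. Once the data are set up, the contractibility (Lemmas~\ref{lem:S_nonempty}--\ref{lem:S_contr}) and standard parametric transversality in the Calabi-Yau setting take care of the rest.
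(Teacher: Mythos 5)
Your proposal is correct and follows essentially the same approach as the paper's (very terse) proof, which simply cites Lemmas~\ref{lem:S_nonempty}, \ref{lem:S_contr}, and~\ref{lemstrictunit}. Your write-up fills in exactly what those lemmas are meant to deliver: contractibility of $\cS_u$ for independence of the perturbation data and cofinal family, and a gluing map $\#_\rho\colon \cS_u\sqcup\cS_c\to\cS_u$ (covered by the blanket statement in Remark~\ref{remgluing}) together with a parametric interpolation for the compatibility with $c_{\epsilon_1,\epsilon_2}$.
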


\begin{proof}
	This is again a standard application of Lemmas~\ref{lem:S_nonempty} and~\ref{lem:S_contr}, along with Lemma \ref{lemstrictunit}.
\end{proof}	


\begin{lem}
	\label{lem:unit_real_cont} The map of $1$-rays from Definition~\ref{dfn:unit_e} (units) is a realisation of the map of $1$-rays from Definition~\ref{dfn:c_tilde} (variant continuation maps) via the map of $1$-rays from Definition~\ref{dfn:prod_e} (product) in the sense of Definition \ref{defrealization} of Subsection~\ref{subsec:algebra_units}.
\end{lem}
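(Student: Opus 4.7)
The plan is to realize the slice-wise composition of the unit and product morphisms as a Floer count on a cylinder obtained by gluing the unit cap onto one input of the pair-of-pants, and then to use the contractibility of the perturbation space $\mathcal{S}_{\tilde c}$ to connect the resulting data to the one defining $\tilde c_{\epsilon,2\epsilon}$.

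First I would make the chain-level composition explicit. At slice $i$, the unit morphism contributes the cycle $u_i \in CF^*(H^2_i)$ determined by the datum in $\mathcal{S}_u$ on $\bC P^1 \setminus \{z_0\}$ with output weight $1$ and output Hamiltonian $H^2_i$. The product morphism at slice $i$ is a chain map determined by a datum in $\mathcal{S}_p$ with weights $\bw = (1,1,2)$ and Hamiltonians $\bH = (H^2_i, H^1_i, H^0_i)$, counting pair-of-pants solutions. The composition evaluated on a generator of $CF^*(H^1_i)$ counts pair-of-pants with the first input prescribed to lie in the cycle $u_i$.

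Next I would invoke the gluing construction of Remark~\ref{remgluing}: gluing the unit datum into the first input of the product datum with large gluing parameter $\rho$ yields cylindrical data whose end conditions are exactly those of $\mathcal{S}_{\tilde c}$ for the Hamiltonian pair $(H^1_i, H^0_i)$ and weights $(1,2)$. The standard one-parameter family in $\rho$ gives a chain homotopy between the composed morphism and the continuation map associated to a specific choice in $\mathcal{S}_{\tilde c}$. Since $\mathcal{S}_{\tilde c}$ is contractible by Lemma~\ref{lem:S_contr}, I can then choose a one-parameter family of data connecting this specific choice to the datum defining $\tilde c_{\epsilon, 2\epsilon}$; the corresponding parametric Floer count provides a second chain homotopy, and the composite homotopy witnesses the desired equivalence at every vertical arrow of the $1$-ray.

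Finally I would lift this to a full equivalence of morphisms of $1$-rays by supplying the requisite homotopies on the diagonal arrows. The argument is the same: the diagonals of the unit and product morphisms glue to produce two-parameter families of cylindrical data (one parameter from the $1$-ray homotopy, one from gluing) that still lie in $\mathcal{S}_{\tilde c}$ for neighbouring Hamiltonian pairs, and contractibility provides the required extensions. The main obstacle, which is one of bookkeeping rather than of mathematical substance, is ensuring compatibility of gluings at the corners of the parameter spaces so that all broken strata land in perturbation spaces whose contractibility can be invoked; this is exactly the discipline enforced by Remark~\ref{remgluing}. An application of Lemma~\ref{lemstrictunit} (replacing the unit morphism by a strict model, if convenient) then lets us conclude the realization property in the form demanded by Definition~\ref{defrealization}.
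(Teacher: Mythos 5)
Your proposal takes essentially the same approach as the paper: glue the unit datum into one input of the pair-of-pants to land in $\cS_{\tilde c}$, then invoke contractibility of the perturbation spaces to homotope to the data defining $\tilde c_{\epsilon,2\epsilon}$, taking care of the diagonal arrows in the same manner. You have simply spelled out the bookkeeping in more detail than the paper's terse two-sentence argument; the appeal to Lemma~\ref{lemstrictunit} at the end is harmless but not needed for the realization claim itself, which is about the specific morphisms already in hand.
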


\begin{proof}
	There are natural gluing maps of spaces  of perturbation data
	$$
	\#_\rho\co \cS_u\sqcup \cS_p\to \cS_{\tilde c},\quad \rho\gg 0. 
	$$
	Using the gluing maps and the contractibility of these spaces, one can fill in the arrows of the diagram for the desired homotopy.
\end{proof}

\begin{cor}
	\label{cor:1_e_c}
At the cohomology level, product with the unit $-*_\epsilon 1_\epsilon$ equals the extrinsic continuation map $c_{\epsilon,2\epsilon}$.
\end{cor}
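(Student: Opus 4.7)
The plan is to string together the three preceding lemmas: the corollary is essentially a formal consequence of the realization setup developed in Subsection~\ref{subsec:algebra_units} together with the geometric comparisons already established.

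First I would invoke Lemma~\ref{lem:unit_real_cont}, which states that the unit map of $1$-rays $U \to \mathcal{C}'$ (from Definition~\ref{dfn:unit_e}) is a realization, in the sense of Definition~\ref{defrealization}, of the variant continuation map $\mathcal{C} \to \mathcal{D}$ (from Definition~\ref{dfn:c_tilde}) via the product map $\mathcal{C}' \otimes \mathcal{C} \to \mathcal{D}$ (from Definition~\ref{dfn:prod_e}). Here $\mathcal{C}$ and $\mathcal{C}'$ are Floer $1$-rays computing $SH^*_M(K,\epsilon)$ and $\mathcal{D}$ is a Floer $1$-ray computing $SH^*_M(K,2\epsilon)$.

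Next I would apply Lemma~\ref{lem:realis_h}: since the unit realizes the variant continuation via the product, the induced map on cohomology obtained by inserting $1_{K,\epsilon} \in SH^*_M(K,\epsilon)$ into the first slot of the product
$$
*_\epsilon \co SH^*_M(K,\epsilon) \otimes SH^*_M(K,\epsilon) \to SH^*_M(K,2\epsilon)
$$
coincides with the map $\tilde c_{\epsilon,2\epsilon}$ induced by the variant continuation $\mathcal{C} \to \mathcal{D}$. In other words, $-\,*_\epsilon 1_{K,\epsilon} = \tilde c_{\epsilon,2\epsilon}$ at the cohomology level.

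Finally, Lemma~\ref{lem:c_versions} identifies $\tilde c_{\epsilon,2\epsilon}$ with $c_{\epsilon,2\epsilon}$ at the cohomology level, completing the identification $-\,*_\epsilon 1_{K,\epsilon} = c_{\epsilon,2\epsilon}$. There is no real obstacle here; the work has already been done upstream in setting up the realization formalism (Subsection~\ref{subsec:algebra_units}), constructing the variant continuation (so that its weights match those of the product's output), and checking that the relevant perturbation data spaces in $\cS_u$, $\cS_p$, and $\cS_{\tilde c}$ are related by natural gluing maps, allowing the homotopies filling in the realization diagram to be constructed by contractibility.
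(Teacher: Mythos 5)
Your proposal is correct and matches the paper's proof exactly: the paper cites the same three lemmas (Lemma~\ref{lem:unit_real_cont}, Lemma~\ref{lem:realis_h}, Lemma~\ref{lem:c_versions}) in a one-line argument, and you have simply unpacked how they fit together in the realization formalism of Subsection~\ref{subsec:algebra_units}.
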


\begin{proof}
	This follows from Lemma~\ref{lem:unit_real_cont}, Lemma~\ref{lem:c_versions} and Lemma \ref{lem:realis_h}.
\end{proof}

\subsection{Product and unit on relative cohomology}

From now on, let us consider the symplectic  cohomologies over $\Lambda$: $$SH^*_M(K,\epsilon,\Lambda)\coloneqq SH^*_M(K,\epsilon)\otimes \Lambda.$$

\begin{defin}
Define the product 
$$*\co SH^*_M(K,\Lambda)\otimes SH^*_M(K,\Lambda) \to SH^*_M(K,\Lambda)$$ as the unique map making the diagram below commutative:
$$
\begin{tikzcd}
 SH^*_M(K,\epsilon,\Lambda)\otimes SH^*_M(K,\epsilon,\Lambda)
\ar[r, "*_\epsilon"]
&
 SH^*_M(K,2\epsilon,\Lambda)
\\
 SH^*_M(K,\Lambda)\otimes SH^*_M(K,\Lambda)
\ar[r, "*"]
\ar[u, "c_{0,\epsilon}\otimes c_{0,\epsilon}"]
&
SH^*_M(K,\Lambda)
\ar[u, "c_{0,2\epsilon}"']
\end{tikzcd}
$$
Recall that the vertical arrows are isomorphisms over $\Lambda$ by Lemma~\ref{lem:c_iso}.

Define the unit $1_K\in SH^*_M(K,\Lambda)$ as $$1= c_{0,\epsilon}^{-1}(1_{K,\epsilon}).$$
Again, recall that $c_{0,\epsilon}$ is  an isomorphism over $\Lambda$ by Lemma~\ref{lem:c_iso}.
\end{defin}

\begin{lem}
The element $1_K$ are well-defined. Moreover, $1_K$ is a 2-sided unit for $*$. 
\end{lem}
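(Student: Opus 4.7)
The plan is to split the lemma into the two claims and deduce both from the structural results already established, namely Lemma~\ref{lem:c_commut}, Lemma~\ref{lem:c_iso}, Lemma~\ref{lem:prod_and_cont}, and Corollary~\ref{cor:1_e_c}, together with the defining commutative square for $*$. The main work is essentially diagram chasing.

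First I would prove well-definedness. Given two choices $0<\epsilon_1<\epsilon_2$, set $1_K^{(j)}=c_{0,\epsilon_j}^{-1}(1_{K,\epsilon_j})$. By Lemma~\ref{lem:c_commut} one has $c_{\epsilon_1,\epsilon_2}\circ c_{0,\epsilon_1}=c_{0,\epsilon_2}$, and by Lemma~\ref{lem:prod_and_cont} one has $c_{\epsilon_1,\epsilon_2}(1_{K,\epsilon_1})=1_{K,\epsilon_2}$. Therefore
$$c_{0,\epsilon_2}(1_K^{(1)})=c_{\epsilon_1,\epsilon_2}\bigl(c_{0,\epsilon_1}(1_K^{(1)})\bigr)=c_{\epsilon_1,\epsilon_2}(1_{K,\epsilon_1})=1_{K,\epsilon_2},$$
and applying $c_{0,\epsilon_2}^{-1}$ (which makes sense over $\Lambda$ by Lemma~\ref{lem:c_iso}) yields $1_K^{(1)}=1_K^{(2)}$.

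Next I would establish the right-unit property. Fix any $\epsilon>0$ and $x\in SH^*_M(K,\Lambda)$, and set $x_\epsilon=c_{0,\epsilon}(x)$. Chasing the defining square of $*$, together with $c_{0,\epsilon}(1_K)=1_{K,\epsilon}$, gives
$$c_{0,2\epsilon}(x*1_K)=x_\epsilon *_\epsilon 1_{K,\epsilon}=c_{\epsilon,2\epsilon}(x_\epsilon)=c_{0,2\epsilon}(x),$$
where the second equality is Corollary~\ref{cor:1_e_c} and the last equality is Lemma~\ref{lem:c_commut}. Since $c_{0,2\epsilon}$ is an isomorphism over $\Lambda$, I conclude $x*1_K=x$.

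For the left-unit property, I need to know that $1_{K,\epsilon}*_\epsilon-$ also computes $c_{\epsilon,2\epsilon}$. The key observation is that the construction of Section~\ref{subsec:algebra_units} is symmetric: the realization datum of Lemma~\ref{lem:unit_real_cont} can equally well be set up with the unit inserted at the first input of the pair-of-pants in Definition~\ref{dfn:prod_e}, rather than the second, since the two inputs carry equal weights $w^1=w^2=1$ and the corresponding spaces of perturbation data $\cS_p(\bH)$ are swap-symmetric. Repeating verbatim the arguments of Lemmas~\ref{lem:c_versions} and~\ref{lem:unit_real_cont} with the roles of the two inputs exchanged, and invoking Lemma~\ref{lem:realis_h}, one obtains $1_{K,\epsilon}*_\epsilon x_\epsilon=c_{\epsilon,2\epsilon}(x_\epsilon)$, after which the same chase as in the previous paragraph gives $1_K*x=x$. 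The only mildly delicate point is keeping track of signs and orientations when swapping the two inputs, but this does not affect the underlying identification of moduli spaces used here.
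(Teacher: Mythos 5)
Your proof is correct, and for the unit property it is essentially the same argument as the paper's: you are chasing the exact same commutative square built from the defining diagram of $*$, Corollary~\ref{cor:1_e_c}, and Lemma~\ref{lem:c_commut}, just written as a chain of equalities rather than displayed as a two-by-two diagram with a slit. Where you go beyond the paper's own proof is that you explicitly supply the $\epsilon$-independence argument (via Lemma~\ref{lem:prod_and_cont} and $c_{\epsilon_1,\epsilon_2}\circ c_{0,\epsilon_1}=c_{0,\epsilon_2}$) and the left-unit argument, both of which the paper states but leaves implicit. Your left-unit argument via the swap-symmetry of $\cS_p(\bH)$ at the two equal-weight inputs matches what the paper gestures at in Remark~\ref{remgluing} (``gluing the unit datum to the product datum at, say, the second input''), and the non-emptiness/contractibility lemmas used to construct the required homotopies apply symmetrically. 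One small caution: the sign/orientation caveat at the end is essentially a non-issue here, since you are not identifying moduli spaces across the swap but rather redoing the realisation argument from scratch with the unit glued at the other input; invoking orientations at all slightly overstates the delicacy.
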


\begin{proof}
	Let us check that $1_K$ is a unit. Consider the following diagram: 
$$
\begin{tikzcd}
SH^*_M(K,\epsilon,\Lambda)\otimes \Lambda\langle  1_{K,\epsilon}\rangle
\ar[r, "-*_\epsilon 1_{K,\epsilon}", bend left]
\ar[r, "c_{\epsilon,2\epsilon}", bend right]
&
SH^*_M(K,2\epsilon,\Lambda)
\\
SH^*_M(K,\Lambda)\otimes  \Lambda\langle  1_K  \rangle
\ar[r, "-*1_K"]
\ar[u, "c_{0,\epsilon}\otimes c_{0,\epsilon}"]
&
SH^*_M(K,\Lambda)
\ar[u, "c_{0,2\epsilon}"']
\end{tikzcd}
$$
The outer square  is commutative by the definition of units and products. The slit is commutative by Corollary~\ref{cor:1_e_c}. Looking at the inner square 
and Lemma~\ref{lem:c_commut} implies that $-*1$ is the identity.
\end{proof}

\begin{cor}
	\label{lem:1_zero_sh_zero}
If $1_K=0\in SH^*_M(K)$, then $SH^*_M(K)=0$.
\end{cor}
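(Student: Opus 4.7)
The plan is to exploit the unitality that was just established: the preceding lemma showed that $1_K$ is a two-sided unit for the product $*$ on $SH^*_M(K,\Lambda)$. So for any class $x\in SH^*_M(K,\Lambda)$ one has the identity $x = x * 1_K$. If we assume $1_K = 0$, then since the product $*$ is induced from a chain-level morphism of $1$-rays (see Definition~\ref{dfn:prod_e}) and hence is bilinear over $\Lambda$ in each slot, we get $x * 0 = 0$, and therefore $x = 0$. As $x$ was arbitrary, $SH^*_M(K,\Lambda) = 0$.

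The one subtlety worth pointing out is that the statement as written mentions $SH^*_M(K)$ rather than $SH^*_M(K,\Lambda)$, but $1_K$ is only defined after inverting the Novikov parameter (recall from Remark~\ref{rem:unit_hard} that the unit only exists over $\Lambda$, since $SH^*_M(M)$ itself can be $T$-torsion). So the statement should be read as: if $1_K = 0$ in $SH^*_M(K,\Lambda)$, then $SH^*_M(K,\Lambda) = 0$. Once unitality is in hand, there is essentially no obstacle; the argument is a one-line consequence of the ring structure.

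Indeed, the hard work has all been done upstream: constructing the product on raised cohomology (Definition~\ref{dfn:prod_e}), defining the unit via a realization (Definition~\ref{dfn:unit_e} and Lemma~\ref{lem:unit_real_cont}), checking unit-compatibility with the extrinsic continuation maps $c_{0,\epsilon}$ (via Corollary~\ref{cor:1_e_c} and Lemma~\ref{lem:c_versions}), and finally descending to $SH^*_M(K,\Lambda)$ using that $c_{0,\epsilon}$ is an isomorphism after inverting $T$ (Lemma~\ref{lem:c_iso}). Given all this, the corollary is the standard algebraic fact that in a (nonzero) unital ring the unit is nonzero, used contrapositively.
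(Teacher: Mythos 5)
Your proof is correct and is substantively the same as the paper's: the paper invokes Corollary~\ref{cor:real_iso} (that a realization $u_{\cC}$ of an isomorphism must be nonzero if the module is nonzero), while you route through the unitality lemma just above the corollary ($-*1_K=\mathrm{id}$) and apply the elementary ring-theoretic observation. Both arguments rest on exactly the same upstream inputs (Lemma~\ref{lem:unit_real_cont}, Corollary~\ref{cor:1_e_c}, Lemma~\ref{lem:c_iso}, Lemma~\ref{lem:realis_h}), so there is no real divergence of method. Your remark that the statement should be read over $\Lambda$ rather than over $\Lambda_{\ge 0}$ is also correct and matches Proposition~\ref{prop:unit_sh}, where $1_K$ lives in $SH_M(K,\Lambda)$.
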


\begin{proof}
	This follows from Corollary~\ref{cor:real_iso}.
\end{proof}	

\begin{rem}
Notice that this statement does not make any reference to the product, but its proof relied on the existence of a product structure.
\end{rem}

\begin{lem}
Restriction maps respect units $1_K\in SH^*_M(K)$. 
\end{lem}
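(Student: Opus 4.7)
The plan is to reduce the claim to the corresponding assertion at the level of raised symplectic cohomology, where the unit admits the explicit chain-level realisation of Definition~\ref{dfn:unit_e}.

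Let $K'\subset K$ be compact subsets of $M$. Following the same scheme as in Proposition~3.3.3 of \cite{Va18}, I would first define restriction maps
$$\mathrm{res}_\epsilon\co SH^*_M(K,\epsilon)\to SH^*_M(K',\epsilon)$$
by choosing cofinal families $\{H^{K}_i\}$ and $\{H^{K'}_i\}$ for the raised settings of $K$ and $K'$ respectively, arranged so that $H^{K}_i\leq H^{K'}_i$, and using the induced monotone continuation maps between the two Floer 1-rays. The construction goes through verbatim because the only difference with the unraised setting is the target value $\epsilon>0$ on the subset, which interferes neither with cofinality comparisons nor with Hamiltonian positivity. A standard diagram chase, obtained by gluing monotone continuation data in two different orders and invoking contractibility of the corresponding space of perturbation data, shows that the square
$$
\begin{tikzcd}
SH^*_M(K,\Lambda) \ar[r, "\mathrm{res}"] \ar[d, "c_{0,\epsilon}"'] & SH^*_M(K',\Lambda) \ar[d, "c_{0,\epsilon}"] \\
SH^*_M(K,\epsilon,\Lambda) \ar[r, "\mathrm{res}_\epsilon"'] & SH^*_M(K',\epsilon,\Lambda)
\end{tikzcd}
$$
commutes; the vertical arrows are isomorphisms by Lemma~\ref{lem:c_iso}.

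The heart of the argument is to establish $\mathrm{res}_\epsilon(1_{K,\epsilon}) = 1_{K',\epsilon}$. By Definition~\ref{dfn:unit_e}, $1_{K,\epsilon}$ is realised by a morphism of 1-rays $U\to \mathcal{C}^{K}$ constructed from perturbation data consistently chosen from $\cS_u$, while $\mathrm{res}_\epsilon$ is realised by a morphism $\mathcal{C}^{K}\to \mathcal{C}^{K'}$ built from data in $\cS_c$. Their composition is a morphism $U\to \mathcal{C}^{K'}$; via the natural gluing operation $\cS_u\sqcup\cS_c\to \cS_u$ that attaches a $\cS_c$-cylinder to the output puncture of a $\cS_u$-cap (cf.~Remark~\ref{remgluing}), this composition is itself built from data that can be viewed as lying in $\cS_u$. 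By Lemma~\ref{lem:S_contr}, $\cS_u$ is contractible, so exactly as in the proof of Lemma~\ref{lem:prod_and_cont}, the composition is equivalent, in the sense of Subsection~\ref{subsec:algebra_units}, to the unit morphism $U\to \mathcal{C}^{K'}$ defining $1_{K',\epsilon}$. Lemma~\ref{lemstrictunit} then yields $\mathrm{res}_\epsilon(1_{K,\epsilon})=1_{K',\epsilon}$.

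Combining this with the commutative square and the definition $1_K = c_{0,\epsilon}^{-1}(1_{K,\epsilon})$ gives $\mathrm{res}(1_K) = 1_{K'}$, as required. The only nontrivial step is the equivalence of 1-ray morphisms in the middle paragraph, which amounts to a careful parametrisation of a homotopy of gluing data; this is routine given the contractibility and gluing technology already developed in Section~\ref{sec:prod_unit}.
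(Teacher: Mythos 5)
Your proof is correct and takes essentially the same approach as the paper: both define raised restriction maps $r_\epsilon$, check that they intertwine with $c_{0,\epsilon}$, and then show $r_\epsilon$ preserves the raised units using the gluing of $\cS_u$ with $\cS_c$ data, contractibility of $\cS_u$, and Lemma~\ref{lemstrictunit}. The paper simply labels these steps ``straightforward'' while you spell out the gluing argument; the substance is identical.
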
	

\begin{proof}
	It is straightforward to define restriction maps $r_\epsilon\co SH^*_M(K,\epsilon)\to SH^*_M(K',\epsilon)$ for $K\subset K'$ and prove that the following diagram is commutative:
$$
\begin{tikzcd}
SH^*_M(K',\epsilon)
\ar[r, "r_\epsilon"]
&
SH^*_M(K,\epsilon)
\\
SH^*_M(K')
\ar[r, "r"]
\ar[u, "c_{0,\epsilon}"]
&
SH^*_M(K)
\ar[u, "c_{0,\epsilon}"]
\end{tikzcd}
$$
The bottom row is the usual restriction map.
It is also straightforward to show that $r_\epsilon$ respects the elements $1_{K,\epsilon}$ using Lemma \ref{lemstrictunit}. The lemma follows.	
\end{proof}

\section{Relative Lagrangian Floer theory and closed-open maps}
\label{sec:lag}

\subsection{Lagrangian Floer theory}
Let $L\subset M$ be a tautologically unobstructed oriented Lagrangian submanifold with a relative Pin structure.

As explained in Section~\ref{sec:rel_sh}, for every compact set $K\subset M$ there exists a Lagrangian version of relative symplectic cohomology, $HF^*(L,K)$.

Completely analogously to the previous section, one defines the raised version $HF^*(L,K,\epsilon)$ as the homology of the completed telescope of
$$
\begin{tikzcd}
\ldots\ar[r]&CF^*(L,H_{i-1})\ar[r]&CF^*(L,H_{i})\ar[r]&CF^*(L,H_{i+1})\ar[r]&\ldots
\end{tikzcd}
$$
Here $CF^*(L,H)$ is the usual Floer complex of $L$ with itself using the Hamiltonian $H$, and the $H_i$ are an increasing cofinal family Hamiltonians as in the beginning of Section~\ref{sec:prod_unit}.
Also analogously to Section~\ref{sec:prod_unit},
one defines extrinsic continuation maps, product, and the unit 
$$
1_{K,L,\epsilon}\in HF^*(L,K,\epsilon).
$$ Finally, one defines the product  and unit on $HF^*(L,K)$ by repeating the formal trick from Section~\ref{sec:prod_unit}. They satisfy obvious versions of the properties from that section.

The only detail which is slightly different is the definition of the  spaces of perturbation data.
Let $C$ be the disk with $k\in\{0,1,2\}$ input punctures and one output puncture. One uses the spaces
$$
\label{eq:S_L}
\cS^{op}_{k;1}(\bw,\bH)=\{
(\alpha, H)\}
\subset \Omega^1(C)\times  C^\infty(C\times M)
$$
where $\alpha,H$ with the following properties:
$$
\left\{
\begin{array}{l}
d\alpha\ge 0,\ H\ge 0,\\
d_{C\times M}(H\alpha)\mid_{C\times\{x\}}\ge 0, \text{ for every }x\in M\\
\alpha\equiv w^idt\text { on strip-like ends,}\\
H\equiv H^i(t)\text { on strip-like ends,}\\
T(\partial C)\subset \ker \alpha.
\end{array}
\right.
$$

The last property has no analogue in the closed-string case. It is required to guarantee that the topological energy of a Floer strip is indeed topological.
Analogues of Lemmas~\ref{lem:S_nonempty} and \ref{lem:S_contr} on non-emptiness and contractibility of these spaces hold under similar conditions. 

\begin{figure}[h]
	\includegraphics[]{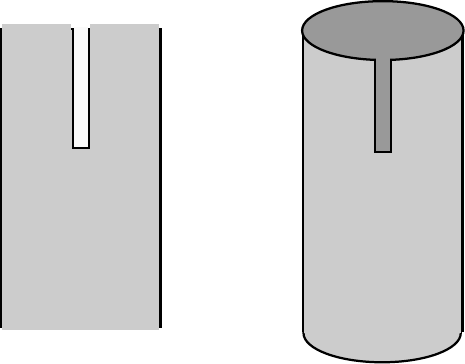}
	\caption{The Riemann surfaces used to define the product structure for the open string version (left) and the closed-open maps (right).}
	\label{fig:slit}
\end{figure}

One point worth a clarification is why, given $w^0\ge \sum_{i=1}^k w_i$, there exists at least one $\alpha$ with $d\alpha\ge 0$ and $T(\partial C)\subset \ker\alpha$, compare with the beginning of the proof of Lemma~\ref{lem:S_nonempty}. Assume, for instance, $k=2$ and $w^0= w_1+w_2$ (it is easy to upgrade the example to the case of the inequality). Consider the domain $C$, the disk with three boundary punctures, conformally represented in Figure~\ref{fig:slit} by a strip of width $w^0$ with an extra slit dividing the widths into $w^1$ and $w^2$. The form $\alpha=dt$ in this representation has the desired properties: it restricts to $w^idt$ on strip-like ends (recall that a strip-like end, by definition, conformally reparametrises a neighboirhood of a puncture to a strip of width one), and $\alpha$ vanishes on $T(\partial C)$, including the tangent directions to the slit.

\subsection{Closed-open maps}
The last ingredient is to relate the closed- and open-string theories by the closed-open map, with the aim of establishing Proposition~\ref{prop:zero_unit_hf_sh}.

For this, let $C$ be the unit disk with one boundary puncture (considered as output) and one interior puncture (considered as input) modulo automorphisms. Equip $C$ with a strip-like end at the boundary puncture, and cylindrical end at the interior puncture. One can assume that the punctures are at points 1 and 0, respectively. 
Define 
\begin{equation}
\label{eq:S_co}
\cS_{1;0;1}(\bw,\bH)=\{
(\alpha, H)\}
\subset \Omega^1(C)\times  C^\infty(C\times M)
 \textit{ where }
\bw=(w^1,w^0),\ \bH=(H^1,H^0).
\end{equation}
to be the subset consisting of all $\alpha,H$ with the following properties:
$$
\left\{
\begin{array}{l}
d\alpha\ge 0,\ H\ge 0,\\
d_{C\times M}(H\alpha)\mid_{C\times\{x\}}\ge 0, \text{ for every }x\in M\\
\alpha\equiv w^1dt\text { on the cylindrical  end,}\\
H\equiv H^1(t)\text { on the cylindrical  end,}\\
\alpha\equiv w^0dt\text { on the strip-like  end,}\\
H\equiv H^0(t)\text { on the strip-like end,}\\
T(\partial C)\subset \ker \alpha.
\end{array}
\right.
$$

Assume that $w^0\ge w^1$ and $H^0\succeq H^1$, then again $\cS_{1;0;1}(\bw,\bH)$ is non-empty and contractible.
To set up closed-open maps between raised cohomologies, we use perturbation spaces of the form
$$
\cS_{co}(\bH)=\cS_{1;0;1}(\bw,\bH)\quad\textit{with}\quad  w^1=w^0=1.
$$

Let $\epsilon>0$,
and $\{H_i^1\}$, $\{H_i^0\}$ be two cofinal families
for $SH^*_M(K,\epsilon)$ chosen so that $H_i^0\succeq H_i^0$ for each $i$.
Consider two Floer 1-rays from the diagram below, and construct the morphism between them  using  curves on the disk $C$ with one interior input and one boundary output as above, with perturbation  data chosen consistently from $\cS_{co}(\bH)$ using $\bH=(H_i^1,H_i^0)$  and $(H_i^1,H_{i+1}^0)$.
$$
\begin{tikzcd}
\ldots\ar[r]&CF^*(H_{i-1}^1)\ar[r]\ar[d]\ar[dr]&CF^*(H_{i}^1)\ar[r]\ar[d]\ar[dr]&CF^*(H_{i+1}^1)\ar[r]\ar[d]&\ldots\\
\ldots\ar[r]&CF^*(L,H_{i-1}^0)\ar[r]&CF^*(L,H_{i}^0)\ar[r]&CF^*(L,H_{i+1}^0)\ar[r]&\ldots
\end{tikzcd}
$$
This gives rise to $\Lambda_{\ge 0}$ module map
$$
\cC\cO_\epsilon\co SH^*_M(K,\epsilon)\to HF^*(L,K,\epsilon).
$$
One formally defines
$$
\cC\cO\co  SH^*_M(K,\Lambda)\to HF^*(L,K,\Lambda)
$$
by requiring the following diagram to commute (recall that the vertical arrows are isomorphisms over $\Lambda$:

$$
\begin{tikzcd}
SH^*_M(K,\epsilon)\otimes \Lambda \arrow[r, "\cC\cO_\epsilon"]& HF^*(L,K,\epsilon)\otimes \Lambda \\
SH^*_M(K)\otimes \Lambda \arrow[r, "\cC\cO"]\arrow[u,"c_{0,\epsilon}"]& HF^*(L,K)\otimes \Lambda\arrow[u,"c_{0,\epsilon}"]
\end{tikzcd}
$$

\begin{lem}
	\label{lem:co_unit}
The closed-open map satisfy
$$
\cC\cO(1_{K,\epsilon})=1_{K,L,\epsilon},\quad \cC\cO(1_K)=1_{K,L}.
$$
\end{lem}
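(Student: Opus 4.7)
The strategy is to realize both sides as images of the canonical generator $1 \in H^*(\widehat{tel}(U)) = \Lambda_{\ge 0}$ under two maps of $1$-rays $U \to CF^*(L, H^0_s)$, and then show that these two morphisms of $1$-rays are equivalent. By Lemma~\ref{lemstrictunit}, equivalent morphisms induce the same element on cohomology, which will yield $\cC\cO_\epsilon(1_{K,\epsilon})=1_{K,L,\epsilon}$.

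More concretely, I would proceed as follows. The open unit $1_{K,L,\epsilon}$ is defined by counting Floer solutions on a disk $D$ with a single boundary output strip-like end, with perturbation data in $\cS^{op}_{0;1}(\bw,\bH)$. The image $\cC\cO_\epsilon(1_{K,\epsilon})$ is instead computed by first using the plane $C_u = \bC P^1 \setminus \{z_0\}$ with a single output cylindrical end (data in $\cS_u$) defining the closed unit, and then applying the closed-open surface $C_{co}$ (disk with one interior input cylindrical end and one boundary output strip-like end, data in $\cS_{co}$). Gluing the output cylindrical end of $C_u$ to the interior input cylindrical end of $C_{co}$ with a gluing parameter $\rho \gg 0$ produces a disk with a single boundary output strip-like end, which is precisely the surface underlying the open unit. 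The key observation is that there is a natural gluing map
$$
\#_\rho \co \cS_u \sqcup \cS_{co} \to \cS^{op}_{0;1}(\bw,\bH),
$$
that is, the glued pair $(\alpha,H)$ still satisfies $d\alpha \ge 0$, $H \ge 0$, $d(H\alpha)|_{C\times\{x\}}\ge 0$, $T(\partial C) \subset \ker \alpha$, and has the correct strip-like end asymptotics, since none of these conditions interact with the gluing region in the interior.

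Next, I would extend this gluing across the entire acceleration datum, using $\bH = (H_i^0)$ for the vertical morphisms and $\bH = (H_i^0, H_{i+1}^0)$ for the diagonal morphisms of $1$-rays, to produce a morphism of $1$-rays $U \to CF^*(L,H_s^0)$ whose underlying counts coincide with the composition $\cC\cO_\epsilon \circ (\text{closed unit})$ (by a standard degeneration/gluing argument for broken Floer trajectories). Both this morphism and the originally defined open unit morphism are constructed from perturbation data in the same space $\cS^{op}_{0;1}$, which is non-empty and contractible by the open-string analogues of Lemmas~\ref{lem:S_nonempty} and~\ref{lem:S_contr}. Choosing a path of perturbation data between the two choices (and extending across homotopies and the telescope structure as in Remark~\ref{remgluing}) produces an equivalence of the two morphisms of $1$-rays. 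Lemma~\ref{lemstrictunit} then gives $\cC\cO_\epsilon(1_{K,\epsilon}) = 1_{K,L,\epsilon}$.

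For the second equality $\cC\cO(1_K) = 1_{K,L}$, I would chase the commutative square defining $\cC\cO$ over $\Lambda$. By Lemma~\ref{lem:prod_and_cont} and its open-string analogue, $c_{0,\epsilon}(1_K) = 1_{K,\epsilon}$ and $c_{0,\epsilon}(1_{K,L}) = 1_{K,L,\epsilon}$. Together with the commutativity of
$$
\begin{tikzcd}
SH^*_M(K,\epsilon)\otimes\Lambda \ar[r,"\cC\cO_\epsilon"] & HF^*(L,K,\epsilon)\otimes\Lambda \\
SH^*_M(K)\otimes\Lambda \ar[u,"c_{0,\epsilon}"] \ar[r,"\cC\cO"] & HF^*(L,K)\otimes\Lambda \ar[u,"c_{0,\epsilon}"]
\end{tikzcd}
$$
and the first equality just proved, invertibility of the vertical arrows over $\Lambda$ (Lemma~\ref{lem:c_iso}) yields the result.

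The main obstacle I expect is the bookkeeping around the gluing map $\cS_u \sqcup \cS_{co} \to \cS^{op}_{0;1}$: one must verify that all the positivity and boundary conditions on $(\alpha, H)$ survive gluing, and that the extension to the full morphism of $1$-rays (involving diagonal squares and associated higher homotopies) can be carried out consistently. As noted in Remark~\ref{remgluing}, the rigorous implementation amounts to the standard manifold-with-corners formalism together with contractibility of all spaces of unbroken choices; once contractibility is in hand, existence of the required homotopy is automatic.
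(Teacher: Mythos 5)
Your proposal is correct and follows essentially the same route as the paper's proof: both rely on the gluing map $\#_\rho\co \cS_{u}\sqcup \cS_{co}\to \cS^{op}_{0;1}$, contractibility of the perturbation spaces to produce a homotopy of $1$-ray morphisms, and Lemma~\ref{lemstrictunit} to conclude $\cC\cO_\epsilon(1_{K,\epsilon})=1_{K,L,\epsilon}$, with the second equality then following by chasing the defining commutative square. You have merely spelled out the degeneration/gluing picture in more detail than the paper does.
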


\begin{proof}
Let $\cC$ be a Floer 1-ray for $SH^*_M(K,\epsilon)$, $\cC'$ be one for $HF^*(L,K,\epsilon)$, and $U$ the 1-ray from Subsection~\ref{subsec:algebra_units}.
One constructs a homotopy
$$
\begin{tikzcd}
U\arrow[r, "1_{K,\epsilon}"]\arrow[rr, bend right, "1_{K,L,\epsilon}"'] &\cC\arrow[r, "\cC\cO"] &\cC'
\end{tikzcd}
$$
by counting  Floer solutions of appropriate continuation maps. This uses the fact that there are natural gluing maps
$$
\#_\rho\co \cS_{u}\sqcup \cS_{co}\to \cS^{op}_{u},\quad \rho\gg 0. 
$$
The perturbation spaces $\cS_{u}$ and $\cS_{co}$ were introduced above (they are used to define the unit and the closed-open map), and  $\cS^{op}_{u}(\bH)\coloneqq \cS^{op}_{0;1}(\bw,\bH)$ with $\bw=w^0=1$ are the perturbation spaces from the definition of unit in Lagrangian Floer cohomology. Hence, the first equality follows from Lemma \ref{lemstrictunit}.

The second equality from the lemma follows from the first one, and the definitions of $1_K$, $1_{K,L}$.
\end{proof}

\appendix
\section{Liouville complements}

%
%

\begin{proof}[Proof of Proposition \ref{prop:giroux_skeleton}]
First of all, note that McLean's Proposition 6.17 reduces the problem to showing that there is a primitive $\theta$ defined on $M-D$ such that the relative cohomology class in $H^2(M,M-D)$ defined by $(\omega,\theta)$ is equal to $\sum \frac{w_i}{c}[D_i]$. It suffices to prove this for $c=1$, so let us assume that.

Let $(L,h,\nabla)$ be the pre-quantization complex line bundle for $(M,\omega)$. More precisely, $h$ is a Hermitian metric on $L$ and $\nabla$ is a compatible connection such that the curvature $2$-form of $\nabla$ is equal to $\omega$.  

Let $O_M(D)$ be the complex line bundle associated to the SC symplectic divisor $D$ as explained in the discussion near Equations (6) and (7) of \cite{tehrani}. Then $O_M(D)$ and $L$ are isomorphic as complex line bundles because they have the same first Chern class.  This in particular shows that $L$ has a section which vanishes precisely along $D$ with multiplicity $w_i$ on $D_i$ by taking the tensor product of the ``defining" sections of $O(D_i)$. Let us call this section $s$. 

Now let $P\to M$ be the $U(1)$-bundle associated to $L$. Then we have a connection one form $\theta'$ on $P$ which is a primitive of the pullback of $\omega$ to $P$. We can construct a section $s'$ of $P\to M$ over $M-D$ using $s$, by $s':=s/\abs{s}$. Pulling back $\theta'$ by $s'$, we obtain a primitive $\theta$ of $\omega$ on $M-D$, which satisfies the desired conditions.

\end{proof}

\bibliographystyle{plain}
\bibliography{Symp_bib}

\end{document}